\newlist{enum-hypothesis}{enumerate}{1}
\setlist[enum-hypothesis]{label=(\arabic*),itemsep=0pt, parsep=0pt}
\newtheorem{theorem}{Theorem}[section]
\newtheorem{proposition}[theorem]{Proposition}
\newtheorem{lemma}[theorem]{Lemma}
\theoremstyle{remark}
\theoremstyle{plain}
\newtheorem{remark}[theorem]{Remark}
\theoremstyle{break}
\theoremstyle{nonumberplain}
\newtheorem{proof}{Proof}
\numberwithin{equation}{section}
\newcommand\R{{\mathbb R}}
\newcommand\C{{\mathbb C}}
\newcommand\N{{\mathbb N}}
\newcommand{\A}{\mathcal{A}}
\newcommand{\B}{\mathcal{B}}
\newcommand{\BB}{\mathbb{B}}
\newcommand{\DD}{\mathcal{D}}
\renewcommand{\H}{\mathcal{H}}
\newcommand{\hH}{\widehat{\mathcal{H}}}
\newcommand{\vc}{\vcentcolon =}           
\newcommand{\cv}{=\vcentcolon }           
\newcommand{\bbbone}{{\text{\usefont{U}{dsss}{m}{n}\char49}}}
\newcommand{\vol}{{\text{vol}}}
\newcommand{\pmu}{\partial_\mu}            
\newcommand{\pnu}{\partial_\nu}              
\newcommand{\prho}{\partial_\rho}            
\newcommand{\psigma}{\partial_\sigma}            
\newcommand{\nmu}{\nabla_\mu}             
\newcommand{\nnu}{\nabla_\nu}                
\newcommand{\tocc}{\xrightarrow{\text{c.c.}}} 
\newcommand{\togt}{\xrightarrow{\text{g.t.}}} 
\newcommand{\hnabla}{\widehat{\nabla}}
\DeclareMathOperator{\Tr}{Tr}	             
\DeclareMathOperator{\tr}{tr}	             
\DeclareMathOperator{\Ker}{Ker}	    
\DeclareMathOperator{\End}{End}	    
\DeclarePairedDelimiter\abs{\lvert}{\rvert}
\DeclarePairedDelimiter\norm{\lVert}{\rVert}
\newcounter{mnotecount}[section]
\renewcommand{\themnotecount}{\thesection.\arabic{mnotecount}}
\newcommand{\mnote}[1]
{\protect{\stepcounter{mnotecount}}$^{\mbox{\footnotesize
$
\bullet$\themnotecount}}$ \marginpar{
\raggedright\tiny\em
$\!\!\!\!\!\!\,\bullet$\themnotecount: #1} }
\begin{document}

\title{Heat trace for Laplace type operators\\ with non-scalar symbols}
\author{B. Iochum, T. Masson}
\date{}
\maketitle

\begin{center}
Centre de Physique Théorique \\
{\small Aix Marseille Univ, Université de Toulon, CNRS, CPT, Marseille, France}
\end{center}
\vspace{1cm}
\begin{abstract}
For an elliptic selfadjoint operator $P =-[u^{\mu\nu}\partial_\mu \partial_\nu +v^\nu \partial_\nu +w]$ acting on a fiber bundle over a Riemannian manifold, where $u^{\mu\nu},v^\mu,w$ are $N\times N$-matrices, we develop a method to compute the heat-trace coefficients $a_r$ which allows to get them by a pure computational machinery. It is exemplified in dimension 4 by the value of $a_1$ written both in terms of $u^{\mu\nu}=g^{\mu\nu}u,v^\mu,w$ or diffeomorphic and gauge invariants. We also answer the question: when is it possible to get explicit formulae for $a_r$?
\end{abstract}

\vspace{0.5cm}

\noindent {\it Keywords}: Heat kernel; non minimal operator; asymptotic heat trace; Laplace type operator

\section{Introduction}

We consider a compact Riemannian manifold $(M,g)$ without boundary and of dimension $d$ together with the nonminimal differential operator 
\begin{align}
\label{def-P}
P \vc-[u^{\mu\nu}(x)\pmu\pnu +v^\nu(x)\pnu +w(x)].
\end{align}
which is a differential operator on a smooth vector bundle $V$ over $M$ of fiber $\C^N$ where $u^{\mu\nu},\, v^\nu,\, w$ are $N\times N$-matrices valued functions.  This bundle is endowed with a hermitean metric. We work in a local trivialization of $V$ over an open subset of $M$ which is also a chart on $M$ with coordinates $(x^\mu)$. In this trivialization, the adjoint for the hermitean metric corresponds to the adjoint of matrices and the trace on endomorphisms on $V$ becomes the usual trace $\tr$ on matrices. 
Since we want $P$ to be a selfadjoint and elliptic operator on $L^2(M,V)$, we first assume that $u^{\mu\nu}(x)\,\xi_\mu\xi_\nu$ is a positive definite matrix in $M_N$:
\begin{align}
\label{Hyp-defpositiv}
u^{\mu\nu}(x)\,\xi_\mu\xi_\nu \text{ has only strictly positive eigenvalues for any $\xi \neq 0$.}
\end{align}

We may assume without loss of generality that $u^{\mu \nu}=u^{\nu\mu}$. In particular $u^{\mu\mu}$ is a positive matrix for each $\mu$ and each $u^{\mu\nu}$ is selfadjoint. \\
The asymptotics of the heat-trace
\begin{align}
\label{heat-trace-asympt}
\Tr e^{-tP} \underset{t \downarrow 0^+}{\sim} \,\sum_{r=0}^\infty a_r(P)\,t^{r-d/2}
\end{align}
exists by standard techniques (see \cite{Gilkeybook}), so we want to compute these coefficients $a_r(P)$.\\
While the spectrum of $P$ is a priori inaccessible, the computation of few coefficients of this asymptotics is eventually possible. The related physical context is quite large: the operators $P$ appeared in gauge field theories, string theory or the so-called non-commutative gravity theory (see for instance the references quoted in \cite{Avramidi2004, Avramidi2006, Avramidibook}). The knowledge of the coefficients $a_r$ are important in physics. For instance, the one-loop renormalization in dimension four requires $a_1$ and $a_2$.

When the principal symbol of $P$ is scalar ($u^{\mu\nu}=g^{\mu\nu} \bbbone_N$), there are essentially two main roads towards the calculation of the heat coefficients (with numerous variants): the first is analytical and based on elliptic pseudodifferential operators while the second is more concerned by the geometry of the Riemannian manifold $M$ itself with the search for invariants or conformal covariance. Compared with the flourishing literature existing when the principal symbol is scalar, there are only few works when it is not. One can quote for instance the case of operators acting on differential forms \cite{GBF, BGP,AV}. The first general results are in \cite{AB} or in the context of spin geometry using the Dirac operators or Stein-Weiss operators \cite{AB1} also motivated by physics \cite{Avramidi2004}. See also the approach in \cite{Fulling,GG,GK,GGR,Korniak, Anan, Guen, MT,Toms}.\\
The present work has a natural algebraic flavor which is the framework of operators on Hilbert space, thus with standard analytical part, so is related with the first road. In particular, it gives all ingredients to produce mechanically the heat coefficients. It is also inspired by the geometry {\it \`a la} Connes where $P=\DD^2$ for a commutative spectral triples $(\A,\H,\DD)$, thus has a deep motivation for noncommutative geometry.

Let us now enter into few technical difficulties.\\
While the formula for $a_0(P)$ is easily obtained, the computation of $a_1(P)$ is much more involved. To locate some difficulties, we first recall the parametrix approach, namely the use of momentum space coordinates $(x,\xi)\in T_x^*M$:
 \begin{align*}
& d_2(x,\xi) = u^{\mu\nu}(x)\,\xi_\mu\xi_\nu\,,\\
& d_1(x,\xi) = -iv^{\mu}(x)\,\xi_\mu\,, \\
& d_0(x)=-w(x).
\end{align*}
Then we can try to use the generic formula (see \cite{GBF})
\begin{align}
\label{eq-coeff}
a_r(P)=\tfrac{1}{(2\pi)^d} \tfrac{1}{-i2\pi}\int  dx\, d \lambda\, d\xi 
\,e^{-\lambda} \tr\,[b_{2r}(x,\xi,\lambda)]\,
\end{align}
where $\lambda$ belongs to a anticlockwise curve $\mathcal{C}$ around $\R^+$ and $(x,\,\xi) \in T^*(M)$. Here the functions $b_{2r}$ are defined recursively by
\begin{align*}
& b_0(x,\xi,\lambda) \vc(d_2(x,\xi)-\lambda)^{-1},\\
&b_r(x,\xi,\lambda) \vc -\sum_{\substack{r=j+\abs{\alpha}+2-k\\j<r}} \tfrac{(-i)^{\abs{\alpha}}}{\alpha!} \, (\partial_\xi^\alpha b_j)(\partial_x^\alpha d_k) \,b_0.
\end{align*}
The functions $b_{2r}$, even for $r=1$, generate typically terms of the form
\begin{align*}
\tr [A_1(\lambda)B_1A_2(\lambda)B_2 A_3(\lambda)\cdots]
\end{align*}
where all matrices $A_i(\lambda)=(d_2(x,\xi)-\lambda)^{-n_i}$ commute but do not commute a priori with $B_i$, so that the integral in $\lambda$ is quite difficult to evaluate in an efficient way. Of course, one can use the spectral decomposition $d_2 = \sum_i\lambda_i \,\pi_i$ to get, 
\begin{align}
\label{integration spectrale}
\sum_{i_1,i_2,i_3,\dots} \big[\int_{\lambda \in \mathcal{C}} d\lambda\, e^{-\lambda}\,(\lambda_{i_1}-\lambda)^{-n_{i_1}}(\lambda_{i_2}-\lambda)^{-n_{i_2}}(\lambda_{i_3}-\lambda)^{-n_{i_3}}\cdots \big]\tr (\pi_{i_1}B_1\pi_{i_2} B_2 \pi_{i_3}\cdots ).
\end{align}
While the $\lambda$-integral is easy via residue calculus, the difficulty is then to recombine the sum.
\\ This approach is conceptually based on the control between the resolvent $(P-\lambda)^{-1}$ which is not a pseudodifferential operator and the parametrix of the differential operator $(P-\lambda)$. 

Because of previous difficulties, we are going to follow another strategy, using a purely functional approach for the kernel of $e^{-tP}$  which is based on the Volterra series (see \cite{BGV,Avramidibook}). This approach is not new and has been used for the same purpose in \cite{AB,Avramidi2004,Avramidi2006}. \\
However our strategy is more algebraic and more in the spirit of rearrangement lemmas worked out in \cite{CM2014,Lesch}. In particular we do not go through the spectral decomposition of $u^{\mu\nu}$ crucially used in \cite{AB} (although in a slightly more general case than the one of Section~\ref{Section-example}). 
To explain this strategy, we need first to fix few notations.

Let $K(t,x,x')$ be the kernel of $e^{-tP}$ where $P$ is as in \eqref{def-P} and satisfies \eqref{Hyp-defpositiv}. Then
\begin{align*}
&\Tr[e^{-tP}\,] = \int dx\, \tr[K(t,x,x)],\\
& K(t,x,x) = \tfrac{1}{(2\pi)^d}\int d\xi \,e^{-ix.\xi} \,(e^{-tP} \,e^{i x.\xi}).
\end{align*}
When $f$ is a matrix-valued function on $M$, we get
\begin{align*}
-P (e^{ix.\xi}f)(x) &=\big(e^{ix.\xi}\,[-u^{\mu\nu}\xi_\mu\xi_\nu +2i u^{\mu\nu}\xi_\mu\pnu  +i v^\mu\xi_\mu + w(x)] f\big)(x)\\
& =-\big(e^{ix.\xi}\,[H+K +P] f\big)(x)
\end{align*}
where we used
\begin{align}
& H (x,\xi) \vc u^{\mu\nu}(x)\,\xi_\mu\xi_\nu \,,\label{def-H}\\
& K (x,\xi) \vc  -i\xi_\mu[v^\mu(x) +2u^{\mu\nu}(x) \,\pnu ]. \label{def-K}
\end{align}
Thus $H$ is the principal symbol of $P$ and it is non-scalar for non-trivial matrices $u^{\mu\nu}$.

If $\bbbone(x)=\bbbone$ is the unit matrix valued function, we get $e^{-tP} e^{ix.\xi}=e^{ix.\xi}\, e^{-t(H+K+P)} \,\bbbone$, so that, after the change of variables $\xi \to t^{1/2}\xi$,  the heat kernel can be rewritten as
\begin{align}
K(t,x,x) & = \tfrac{1}{(2\pi)^d}\int d\xi \,e^{-t(H+K+P)} \,\bbbone =\tfrac{1}{t^{d/2}}\tfrac{1}{(2\pi)^d}  \int d\xi \,e^{-H-\sqrt{t} K -tP}\,\bbbone. \label{Volterra}
\end{align}
A repetitive application of Duhamel formula  (or Lagrange's variation of constant formula) gives the Volterra series (also known to physicists as Born series): 
\begin{align*}
e^{A+B}=e^A +\sum_{k=1}^\infty \int_0^1 ds_1 \int_0^{s_1} ds_{2} \cdots\int_0^{s_{k-1}} ds_k\, \,e^{(1-s_1)A}\,B\,e^{(s_1-s_{2})A} \cdots e^{(s_{k-1}-s_k)A} \,B \,e^{s_k A}\,.
\end{align*}
Since this series does not necessarily converges for unbounded operators, we use only its first terms to generate the asymptotics \eqref{heat-trace-asympt} from \eqref{Volterra}. When $A=-H$ and $B=-\sqrt{t}K-tP$, it yields for the integrand of \eqref{Volterra}
\begin{align}
e^{-H-\sqrt{t} K -tP} \bbbone =\Big(&e^{-H} -\sqrt{t} \int_0^1 ds_1 \,e^{(s_1-1)H}\,K\,e^{-s_1 H} \nonumber\\
& +t\big[\int_0^1 ds_1 \int_0^{s_1} ds_2\, e^{(s_1-1)H}\,K\,e^{(s_2 -s_1)H}\, K\,e^{-s_2 H}-\int_0^1 ds_1\,e^{(s_1-1)H}\,P\,e^{-s_1 H} \big]\nonumber \\
& + \mathcal{O}(t^2)\Big) \bbbone. \label{Volt}
\end{align}
After integration in $\xi$, the term in $\sqrt{t}$ is zero since $K$ is linear in $\xi$ while $H$ is quadratic in $\xi$, so that 
\begin{align*}
\tr K(t,x,x)  \underset{t\downarrow 0}{\simeq} \tfrac{1}{t^{d/2}} [a_0(x)+t\,a_1(x)+\mathcal{O}(t^2)]
\end{align*}
with the local coefficients
\begin{align}
& a_0 (x) = \tr \tfrac{1}{(2 \pi)^{d}} \int d\xi\, e^{-H(x,\xi)}  \label{a0(x)} ,\\
& a_1(x)  =  \tr \tfrac{1}{(2 \pi)^{d}}\int d\xi\,\big[\int_0^1 ds_1 \int_0^{s_1} ds_2\, e^{(s_1-1)H}\,K\,e^{(s_2 -s_1)H}\, K\,e^{-s_2 H}\big]\nonumber\\
&\qquad\qquad - \tr \tfrac{1}{(2 \pi)^{d}}\int d\xi\,[\int_0^1 ds_1 \,e^{(s_1-1)H}\,P\, e^{-s_1 H}]\label{a1(x)}
\end{align}
where the function $\bbbone$ has been absorbed in the last $e^{-s_i H}$.\\
The coefficients $a_0(P)$ and $a_1(P)$ are obtained after an integration in $x$. Since we will not perform that integration which converges when manifold $M$ is compact, we restrict to $a_r(x)$.

We now briefly explain how we can compute $a_r(P)$. Expanding $K$ and $P$ in $a_r(x)$, one shows in Section~\ref{Section-algebraic} that all difficulties reduce to compute algebraic expressions like (modulo the trace)
\begin{align}
\label{Volterraterm}
\tfrac{1}{(2 \pi)^{d}}\int d\xi\,\,\,\int_0^1ds_1\int_0^{s_1}ds_2 \cdots \int_0^{s_{k-1}}ds_k\,\, e^{(s_1-1)H} \,B_1 \,e^{(s_2-s_1)H} \,B_2 \cdots B_k\, e^{-s_k H}
\end{align}
where the $B_i$ are $N\times N$-matrices equal to $u^{\mu\nu}$, $v^\mu$, $w$ or their derivatives of order two at most. Moreover, we see \eqref{Volterraterm} as an $M_N$-valued operator acting on the variables $(B_1,\dots,B_k)$ which precisely allows to focus on the integrations on $\xi$ and $s_i$ independently of these variables. Then we first compute the integration in $\xi$, followed by the iterated integrations in $s_i$. The main result of this section is \eqref{Tkp versus Ink} which represents the most general operator used in the computation of $a_r$. We will see that the previously mentioned integrations are manageable in Section~\ref{Section-Integral}. Actually, we show that we can reduce the computations to few universal integrals and count the exact number of them which are necessary to get $a_r(x)$ in arbitrary dimension. In Section \ref{Section-example}, we reduce to the case  $u^{\mu\nu}=g^{\mu\nu}u$ where $u$ is a positive matrix and explicitly compute the local coefficient $a_1$ for $\dim(M)=4$ in Theorem \ref{calcul de a1} in terms of $(u,\,v^\mu, \,w)$. Looking after geometric invariants like the scalar curvature of $M$, we swap the variables $(u,\,v^\mu,\,w)$ with some others based on a given connection $A$ on $V$. This allows to study the diffeomorphic invariance and covariance under the gauge group of the bundle $V$. The coefficient  $a_1$ can then be simply written in terms of a covariant derivative (combining $A$ and Christoffel symbols). In Section \ref{method}, we use our general results to address the following question: is it possible to get explicit formulae for $a_r$ avoiding the spectral decomposition like \eqref{integration spectrale}? We show that there is a positive answer only when $d$ is even and $r<d/2$ when $u^{\mu\nu}=g^{\mu\nu} u$ with non-trivial $u,\,v^\mu,\,w$. Finally, the case $u^{\mu\nu}=g^{\mu\nu}u + X^{\mu\nu}$ is considered as an extension of $u^{\mu\nu}=g^{\mu\nu}\bbbone + X^{\mu\nu}$ which appeared in the literature.

\section{\texorpdfstring{Formal computation of $a_r(P)$}{Formal computation of ar(P)}}
\label{Section-algebraic}

This section is devoted to show that the computation of $a_r(x)$ as \eqref{a1(x)} reduces to the one of terms like in \eqref{Volterraterm}. Since a point $x\in M$ is fixed here, we forget to mention it, but many of the structures below are implicitly defined as functions of $x$.

For $k\in \N$, let $\Delta_k$ be the $k$-simplex
\begin{align*}
& \Delta_k \vc \{s=(s_0,\cdots s_k)\in \R_+^{k+1} \, \vert \, 0 \leq s_k \leq s_{k-1} \leq \cdots \leq s_2 \leq s _1 \leq s_0=1\},\\
& \Delta_0 \vc \varnothing \,\,\text{by convention}.
\end{align*}
We use the algebra $M_N$ of $N \times N$-complex matrices. \\
Denote by $M_N[\xi,\partial]$ the complex vector space of polynomials both in $\xi=(\xi_\mu) \in \R^d$ and $\partial=(\pmu)$ which are $M_N$-valued differential operators and polynomial in $\xi$; for instance, $P,\,K, \, H \in M_N[\xi,\partial]$ with $P$ of order zero in $\xi$ and two in $\partial$, $K$ of order one in $\xi$ and $\partial$, and $H$ of order two in $\xi$ and zero in $\partial$.

For any $k\in \N$, define a map $f_k (\xi): M_N[\xi,\partial]^{\otimes^k} \to M_N[\xi,\partial]$, evidently related to \eqref{Volterraterm}, by
\begin{align}
 & f_k(\xi)[B_1 \otimes \cdots \otimes  B_k]\vc \int_{\Delta_k}ds\, e^{(s_1-1)H(\xi)} \,B_1 \,e^{(s_2-s_1)H(\xi)} \,B_2 \cdots B_k\, e^{-s_k H(\xi)},\label{fk avec arguments}\\
&  f_0(\xi)[a] \vc a\, e^{-H(\xi)}, \quad \text{for }a\in \C \cv M_N^{\otimes^0}.
\end{align}
Here, by convention, each $\pmu$ in $B_i\in M_N[\xi,\partial]$ acts on all its right remaining terms. Remark that the map $\xi \mapsto f_k(\xi)$ is even.\\
We first rewrite \eqref{Volt} in these notations (omitting the $\xi$-dependence):
\begin{align}
e^{-H-\sqrt{t} K -tP}  &=e^{-H}+\sum_{k=1}^\infty (-1)^k f_k[(\sqrt{t} K +t P)\otimes \cdots \otimes (\sqrt{t}K +t P)] \label{generateur de t}\\
&=e^{-H} \nonumber\\
&\qquad-t^{1/2} f_1[K] \nonumber \\
&\qquad+t(f_2[K\otimes K]-f_1[P] ) \nonumber\\
&\qquad+t^{3/2}(f_2[K\otimes P]+f_2[P\otimes K]-f_3[K\otimes K\otimes K])\nonumber\\
&\qquad+t^2(f_2[P\otimes P] -f_3[K\otimes K\otimes P]-f_3[K\otimes P \otimes K]-f_3[P \otimes K \otimes K])\nonumber\\
&\qquad + \mathcal{O}(t^2). \nonumber
\end{align}
Since all powers of $t$ in $(2n+1)/2$ have odd powers of $\xi_{\mu_1}\cdots \xi_{\mu_{p}}$ (with odd $p$), the $\xi$-integrals in \eqref{Volterraterm} will be zero since $f_k$ is even in $\xi$, so only
\begin{align}
& a_0(x)= \tr \tfrac{1}{(2 \pi)^{d}}\int d\xi\,f_0[1],\\
& a_1(x)= \tr \tfrac{1}{(2 \pi)^{d}}\int d\xi\,(f_2[K\otimes K]-f_1[P]), \label{eq-a1(x)}\\
& a_2(x)= \tr \tfrac{1}{(2 \pi)^{d}}\int d\xi\,(f_2[P\otimes P] -f_3[K\otimes K\otimes P]-f_3[K\otimes P \otimes K]-f_3[P \otimes K \otimes K]) \nonumber
\end{align} 
etc survive. \\
Our first (important) step is to erase the differential operator aspect of $K$ and $P$ as variables of $f_k$ to obtain variables in the space $M_N[\xi]$ of $M_N$-valued polynomials in $\xi$: because a $\partial$ contained in $B_i$ will apply on $e^{(s_{i+1}-s_i)H}B_{i+1}  \cdots  B_k e^{-s_kH}$, by a direct use of Leibniz rule and the fact that
\begin{align}
\label{derivee exp}
\partial e^{-sH}= -\int_0^s ds_1 e^{(s_1-s)H} \,(\partial H)\, e^{-s_1H},
\end{align}
we obtain the following

\begin{lemma}
\label{fk avec derivee}
When all $B_j$ are in $M_N[\xi, \partial]$, the functions $f_k$ for $k\in \N^*$ satisfy 
\begin{align}
f_k(\xi)[B_1 \otimes \cdots \otimes B_i\partial \otimes \cdots &\otimes B_k] =\sum_{j=i+1}^k f_k(\xi)[B_1 \otimes \cdots \otimes(\partial B_j)\otimes \cdots\otimes  B_k]\ \nonumber \\
&- \sum_{j=i}^k f_{k+1}(\xi)[B_1 \otimes \cdots \otimes B_j\otimes (\partial H)\otimes B_{j+1}\otimes \cdots\otimes  B_k]. \label{eq-fk avec derivee}
\end{align}
\end{lemma}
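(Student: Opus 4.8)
The plan is to expand the left-hand side directly from the definition \eqref{fk avec arguments} and let the single $\partial$ sitting at the end of the $i$-th slot act, according to the stated convention, on the entire product to its right, namely on
\[
e^{(s_{i+1}-s_i)H}\,B_{i+1}\,e^{(s_{i+2}-s_{i+1})H}\cdots B_k\,e^{-s_kH}.
\]
First I would apply the Leibniz rule to distribute $\partial$ over this product. Its factors are of two kinds: the matrices $B_{i+1},\dots,B_k$, and the exponentials $e^{(s_{j+1}-s_j)H}$ for $i\le j\le k$ (with the conventions $s_0=1$ and $s_{k+1}=0$, so that the trailing factor $e^{-s_kH}$ is the case $j=k$). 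The two sums on the right-hand side of \eqref{eq-fk avec derivee} will correspond exactly to these two families of factors.

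The terms in which $\partial$ hits a matrix $B_j$ with $i<j\le k$ are immediate: they replace $B_j$ by $\partial B_j$ while leaving the simplex $\Delta_k$ and all exponentials untouched, so after integration over $\Delta_k$ they reproduce $f_k(\xi)[B_1\otimes\cdots\otimes(\partial B_j)\otimes\cdots\otimes B_k]$. Summing over $j$ gives the first sum. The range starts at $j=i+1$ precisely because $B_i$ lies to the left of $\partial$ and is therefore untouched.

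The terms in which $\partial$ hits an exponential are where the genuine work lies. Writing the relevant factor as $e^{-\sigma_j H}$ with width $\sigma_j=s_j-s_{j+1}\ge 0$, formula \eqref{derivee exp} turns $\partial e^{-\sigma_j H}$ into $-\int_0^{\sigma_j}d\tau\,e^{(\tau-\sigma_j)H}(\partial H)\,e^{-\tau H}$, which splits the single exponential into two, inserts $(\partial H)$ between them, and produces one fresh integration variable. The decisive computation is the change of variables $s'=s_{j+1}+\tau$: the inner integral then runs over $s'\in[s_{j+1},s_j]$ and the two new exponentials become $e^{(s'-s_j)H}$ and $e^{(s_{j+1}-s')H}$, which are exactly the factors that must flank a new slot inserted after $B_j$ in an $f_{k+1}$ expression.

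The main obstacle --- and the only genuinely delicate point --- is to identify the domain of the enlarged integral with $\Delta_{k+1}$. Adjoining the constraint $s_{j+1}\le s'\le s_j$ to the chain $1=s_0\ge s_1\ge\cdots\ge s_k\ge 0$ defining $\Delta_k$ produces exactly the chain $1=s_0\ge\cdots\ge s_j\ge s'\ge s_{j+1}\ge\cdots\ge s_k\ge 0$, i.e. $\Delta_{k+1}$ with $s'$ inserted in the correct position. Once this is recognized, the contribution of the exponential $e^{(s_{j+1}-s_j)H}$ is seen to equal $-f_{k+1}(\xi)[B_1\otimes\cdots\otimes B_j\otimes(\partial H)\otimes B_{j+1}\otimes\cdots\otimes B_k]$, the minus sign being inherited from \eqref{derivee exp}. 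Summing over the exponentials $i\le j\le k$ yields the second sum and completes the proof; here the range starts at $j=i$ because the first exponential on which $\partial$ can act is the one immediately following $B_i$.
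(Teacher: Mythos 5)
Your proof is correct and follows essentially the same route as the paper's: Leibniz rule on the product to the right of $B_i$, the identity \eqref{derivee exp} to convert $\partial$ of an exponential into an inserted $(\partial H)$ with a new integration variable, and the identification of the enlarged domain with $\Delta_{k+1}$. The only cosmetic difference is that you absorb the trailing factor $e^{-s_kH}$ into the general case via the convention $s_{k+1}=0$, whereas the paper treats it as a separate third case.
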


\begin{proof}
By definition (omitting the $\xi$-dependence)
\begin{align*}
f_k[B_1 \otimes \cdots \otimes B_i\partial \otimes \cdots\otimes  B_k] & \\
&\hspace{-2cm} =\int_{\Delta_k} ds\,e^{(s_1-1)H} B_1\, e^{(s_2-s_1)H}\,B_2 \cdots B_i \partial(e^{(s_{i+1}-s_i)H}B_{i+1} \cdots B_k \,e^{-s_kH}).
\end{align*}
The derivation $\partial$ acts on each factor in the parenthesis:

– On the argument $B_j$, $j\geq i+1$, which gives the first term of \eqref{eq-fk avec derivee}.

– On a factor $e^{(s_{j+1}-s_j)H}$ for $i\leq j \leq k-1$, we use \eqref{derivee exp}
\begin{align*}
\partial\, e^{(s_{j+1}-s_j)H}  =-\int_0^{s_j-s_{j+1}} ds' \, e^{s'+s_{j+1}-s_j)H} \,(\partial H)\,e^{-s'H}
 = -\int_{s_{j+1}}^{s_j} ds\, e^{(s-s_j)H} \,(\partial H)\, e^{(s_{j+1}-s)H} 
\end{align*}
with $s= s'+s_{j+1}$, so that in the integral, one obtains the term
\begin{align*}
- \int_0^1 ds_1\int_0^{s_1} ds_2 \cdots \int_0^{s_{k-1}}\int_{s_{j+1}}^{s_j} ds & \,e^{(s_1-1)H} B_1 e^{(s_2-s_1)H}B_2 \cdots \\
& \hspace{1cm}\cdots B_j e^{(s-s_j)H}(\partial H) \,e^{(s_{j+1}-s)H} B_{j+1} \cdots B_k\,e^{-s_kH}).
\end{align*}
Since, as directly checked, $\int_0^1ds_1\cdots \int_0^{s_{k-1}} ds_k \int_{s_j+1}^{s_j} ds = \int_{\Delta_{k+1}} ds'$ with $s'_j =s_j$ for $j\leq i-1$, $s'_i=s$ and $s'_j=s_{j-1}$ for $j\geq i+1$, this term is $-f_{k+1}[B_1 \otimes \cdots \otimes B_j\otimes (\partial H)\otimes B_{j+1}\otimes \cdots\otimes  B_k]$.

– Finally, on the factor $e^{-s_kH}$, one has $\partial e^{-s_k H}= - \int_0^{s_k} \,e^{(s-s_k)H} \,(\partial H) \, e^{-sH}$ which gives the last term: $-f_{k+1}[B_1 \otimes \cdots \otimes B_k \otimes (\partial H)]$.
\end{proof}
Thus \eqref{generateur de t} reduces to compute $f_k[B_1\otimes \cdots\otimes B_k]$ where $B_i\in M_N[\xi]$. \\
Our second step is now to take care of the $\xi$-dependence: by hypothesis, each $B_i$ in $B_1\otimes \cdots \otimes B_k$ has the form $\sum_{}B^{\mu_1\dots \mu_{\ell_i}}\,\xi_{\mu_1}\cdots \xi_{\mu_{\ell_i}}$ with $B^{\mu_1\dots \mu_{\ell_i}} \in M_N$, so that $B_1\otimes \cdots \otimes B_k$ is a sum of terms like $\BB_k^{\mu_1\dots \mu_{\ell}}\,\xi_{\mu_1}\cdots \xi_{\mu_{\ell}}$ where $\BB_k^{\mu_1\dots\mu_\ell}  \in M_N^{\otimes^k}$. As a consequence, by linearity of $f_k$ in each variable, computation of $a_r$ requires only to evaluate terms like
\begin{align}
\label{int of fk as operator}
\tfrac{1}{(2\pi)^d}\int d\xi \, \xi_{\mu_1}\cdots \xi_{\mu_\ell} \,f_k(\xi)[\,\BB_k^{\mu_1\dots\mu_\ell}] \in M_N \quad\text{ with }\BB_k^{\mu_1\dots\mu_\ell}  \in M_N^{\otimes^k},
\end{align}
and we may assume that $\ell=2p, \,p\in \N$.

The next step in our strategy is now to rewrite the $f_k$ appearing in \eqref{int of fk as operator} in a way which is independent of the variables $\BB_k^{\mu_1\dots\mu_\ell}$, a rewriting obtained in \eqref{fk}. Then the driving idea is to show firstly that such $f_k$ can be computed and secondly that its repeated action on all variables which pop up by linearity (repeat that $K$ has two terms while $P$ has three terms augmented by the action of derivatives, see for instance \eqref{a1(x)}) 
is therefore a direct computational machinery.\\ For such rewriting we need few definitions justified on the way. 

 For $k\in \N$, define the (finite-dimensional) Hilbert spaces
\begin{align*}
\H_k \vc M_N^{\otimes^ k}, \quad \H_0 \vc \C,
\end{align*}
endowed with the scalar product
\begin{align*}
\langle A_1 \otimes \cdots \otimes A_k,B_1\otimes \cdots \otimes B_k \rangle_{\H_k} \vc \tr(A_1^* B_1) \cdots \tr (A_k^*B_k), \quad \langle a_0,b_0\rangle_{\H_0} \vc \overline{a}_0b_0,
\end{align*}
so each $M_N$ is seen with its Hilbert--Schmidt norm and $\norm{A_1 \otimes \cdots \otimes A_k}^2_{\H_k}=\prod_{j=1}^k \tr(A_j^*A_j)$.

We look at \eqref{int of fk as operator} as the action of the operator $\tfrac{1}{(2\pi)^d}\int d\xi \, \xi_{\mu_1}\cdots \xi_{\mu_l} \,f_k(\xi)$ acting on the finite dimensional Hilbert space $\H_k$.
\\
 Denote by $\B(E,F)$ the set of bounded linear operators between the vector spaces $E$ and $F$ and let $\B(E) \vc \B(E,E)$. 
 For $k\in \N$, let
\begin{align*}
& \hH_k \vc  \H_{k+1},\text{ so }\,\hH_0=M_N\,,\\
& m: \hH_k \to M_N\,,&& m(B_0 \otimes \cdots \otimes  B_k) \vc B_0\cdots B_k\, \,\text{ (multiplication of matrices)}, \\
& \kappa : \H_k \to \hH_k\,, && \kappa(B_1 \otimes \cdots \otimes  B_k) \vc\bbbone \otimes B_1 \otimes \cdots \otimes  B_k\,,\\
& \iota: M_N^{\otimes^{k+1}}\to \B(\H_k,M_N), && \iota(A_0 \otimes \cdots \otimes  A_k)[B_1\otimes\cdots \otimes B_k]  \vc A_0 B_1 A_1 \cdots B_k A_k\,,\\
& \iota: M_N\to \B(\C,M_N), && \iota(A_0)[a]  \vc a A_0\,,\\
&\rho: M_N^{\otimes^{k+1}} \to \B(\hH_k), && \rho(A_0 \otimes \cdots \otimes  A_k)[B_0 \otimes \cdots \otimes B_k] \vc  B_0A_0 \otimes \cdots \otimes B_k A_k.
\end{align*}
For $A\in M_N$ and $k\in \N$, define the operators 
\begin{align*}
& R_i(A): \hH_k \to \hH_k \,\text{ for }i=0,\dots ,k\\
& R_i(A)[B_0 \otimes \cdots \otimes B_k] \vc B_0 \otimes \cdots \otimes B_i A\otimes \cdots \otimes B_k.
\end{align*}
Thus
\begin{align}
\label{produit tensoriel versus produit}
\rho(A_0 \otimes \cdots \otimes  A_k)=R_0(A)\cdots R_k(A).
\end{align}
As shown in Proposition \ref{prop-iso}, $\iota$ is an isomorphism.
The links between the three spaces $M_N^{\otimes ^{k+1}},\,\B(\hH_k)$ and $\B(\H_k,M_N)$ are summarized in the following commutative diagram  where $(m \,\circ\, \kappa^*)(C)[B_1\otimes \cdots \otimes B_k] = m (C[\bbbone \otimes B_1 \otimes \cdots \otimes  B_k])$:
\begin{equation}
\label{eq-diagram}
\vcenter{\xymatrix@R=10pt@C=45pt%
{
 & {\B(\hH_k)} \ar[dd]^{m \,\circ\, \kappa^*}
 \\
{M_N^{\otimes k+1}} \ar[ru]^{\rho}  \ar[rd]^{\simeq}_{\iota} & 
\\
& {\B(\H_k,M_N)}
}}
\end{equation}

For any matrix $A\in M_N$ and $s \in \Delta_k$, define
\begin{align*}
&c_k(s,A)  \vc (1-s_1)\,A\otimes \bbbone \otimes \cdots\otimes \bbbone \, + \, (s_1-s_2)\,\bbbone \otimes A \otimes \bbbone\otimes \cdots \otimes \bbbone
\\
 &\hspace{4cm} +\cdots +\, (s_{k-1}-s_k)\,\bbbone\otimes \cdots \otimes A \otimes \bbbone \, + \, s_k\,\bbbone \otimes \cdots \otimes \bbbone \otimes A,\\
 &c_0(s,A) \vc A
\end{align*}
where the tensor products have $k+1$ terms, so that $c_k(s,A) \in M_N^{\otimes^{k+1}}$.\\
This allows a compact notation since now 
\begin{align}
\label{fk}
f_k(\xi)= \int _{\Delta_k} ds\,\, \iota[e^{-\xi_\alpha\xi_\beta\, c_k(s,u^{\mu\nu})}] \in \B(\H_k,M_N), \quad \text{ with } f_0(\xi)=\iota(e^{-\xi_\alpha\xi_\beta u^{\alpha\beta}}),
\end{align}
and these integrals converge because the integrand is continuous and the domain $\Delta_k$ is compact. 
\\
Since we want to use operator algebra techniques, with the help of $c_k(s,A) \in M_N^{\otimes^{k+1}}$, it is useful to lift the computation of \eqref{fk} to the (finite dimensional $C^*$-algebra) $\B(\hH_k)$ as viewed in diagram \eqref{eq-diagram}. Thus, we define
\begin{align*}
& C_k(s,A) \vc \rho(c_k(s,A)) \in \B(\hH_k),
\end{align*}
and then, by \eqref{produit tensoriel versus produit}
$$
C_k(s,A)= (1-s_1)\,R_0(A)+(s_1-s_2)\,R_1(A)+ \cdots +(s_{k-1}-s_k)\,R_{k-1}(A)+ s_k\,R_k(A).
$$

\begin{remark}
\label{lift}
 All these distinctions between $\H_k$ and $\hH_k$ or $c_k(s,A)$ and $C_k(s,A)$ seems innocent so tedious. But we will see later on that the distinctions between the different spaces in \eqref{eq-diagram} play a conceptual role in the difficulty to compute the coefficients $a_r$. Essentially, the computations and results takes place in $\B(\hH_k)$ and not necessarily in the subspace $M_N^{\otimes k+1} \subset \B(\hH_k)$ (see \eqref{Tkp versus Ink} for instance).
\end{remark}
 
Given a diagonalizable matrix $A=C\,\text{diag}(\lambda_1,\dots,\lambda_n)\,C^{-1} \in M_N$, let $C^{ij} \vc CE^{ij}C^{-1}$ for $i,j=1,\dots, n$ where the $E^{ij}$ form the elementary basis of $M_N$ defined by $[E^{ij}]_{kl}\vc \delta_{ik} \, \delta_{jl}$.
\\
We have the easily proved result:
\begin{lemma}
\label{LandR}
We have
 
 i) $R_i(A_1A_2)=R_i(A_2)R_i(A_1)$ and $[R_i(A_1),\,R_j(A_2)]=0$ when $i\neq j$.

ii) $R_i(A)^*=R_i(A^*)$.

iii) When $A$ is diagonalizable, $A C^{ij}= \lambda_i \,C^{ij}$ and $C^{ij} A= \lambda_j \,C^{ij}$.\\
Thus, all operators $R_i(A)$ on $\hH_k$ for any $k\in \N$ have common eigenvectors
\begin{align*}
 R_i(A) [C^{i_0j_0}\otimes \cdots \otimes C^{i_kj_k}] = \lambda_{j_i} \,C^{i_0j_0}\otimes \cdots \otimes C^{i_kj_k},
\end{align*}
and same spectra as $A$. \\
In particular, there are strictly positive operators if $A$ is a strictly positive matrix.
 \end{lemma}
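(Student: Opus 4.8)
The plan is to verify each of the three assertions by a direct computation on simple tensors, since every operator involved is linear and $\hH_k = M_N^{\otimes^{k+1}}$ is spanned by the elements $B_0 \otimes \cdots \otimes B_k$. For i), I would expand both sides on such a simple tensor. Because $R_i$ touches only the $i$-th slot, $R_i(A_1A_2)$ replaces $B_i$ by $B_i A_1 A_2$, whereas $R_i(A_2)R_i(A_1)$ first produces $B_i A_1$ and then right-multiplies by $A_2$; associativity of matrix multiplication gives equality, the order reversal being exactly what one expects for a right action. For the commutation when $i\neq j$, the two operators modify distinct tensor slots, so applying them in either order yields $B_iA_1$ in slot $i$ and $B_jA_2$ in slot $j$ simultaneously.

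For ii), I would compute the Hilbert–Schmidt pairing on simple tensors $X=B_0\otimes\cdots\otimes B_k$ and $Y=C_0\otimes\cdots\otimes C_k$. Only the $i$-th factor is altered by $R_i(A)$, so
\begin{align*}
\langle R_i(A)X,\,Y\rangle_{\hH_k} = \tr\big((B_iA)^*C_i\big)\,\prod_{j\neq i}\tr(B_j^*C_j).
\end{align*}
Rewriting $\tr((B_iA)^*C_i)=\tr(A^*B_i^*C_i)=\tr\big(B_i^*(C_iA^*)\big)$ by cyclicity identifies the right-hand side as $\langle X,\,R_i(A^*)Y\rangle_{\hH_k}$, whence $R_i(A)^*=R_i(A^*)$.

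For iii), the two matrix identities follow from the elementary relations $\mathrm{diag}(\lambda)\,E^{ij}=\lambda_i\,E^{ij}$ and $E^{ij}\,\mathrm{diag}(\lambda)=\lambda_j\,E^{ij}$; conjugating by $C$ and using $A=C\,\mathrm{diag}(\lambda)\,C^{-1}$ gives $AC^{ij}=\lambda_iC^{ij}$ and $C^{ij}A=\lambda_jC^{ij}$. Applying $R_i(A)$ to the basis vector $C^{i_0j_0}\otimes\cdots\otimes C^{i_kj_k}$ right-multiplies the $i$-th factor by $A$, and $C^{i_ij_i}A=\lambda_{j_i}C^{i_ij_i}$ produces the claimed eigenvalue $\lambda_{j_i}$. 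Since $\{C^{ij}\}$ is a basis of $M_N$ (the image of the basis $\{E^{ij}\}$ under conjugation by the invertible $C$), these tensor products form an eigenbasis of $\hH_k$, so $R_i(A)$ is diagonalizable with spectrum $\{\lambda_1,\dots,\lambda_n\}=\spec(A)$. Finally, if $A$ is strictly positive then $A=A^*$, so by ii) $R_i(A)^*=R_i(A^*)=R_i(A)$ is self-adjoint with all eigenvalues $\lambda_j>0$, hence strictly positive.

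As the statement advertises, every step is a direct verification and there is no genuine obstacle. The only points demanding minor care are the order reversal in i) and, in ii), correctly tracking which single tensor slot the adjoint modifies so that the factor $\tr((B_iA)^*C_i)$ is rearranged into the contribution of $R_i(A^*)$ rather than of some other $R_j$.
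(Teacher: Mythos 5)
Your proof is correct. The paper offers no proof of this lemma at all --- it is introduced as ``the easily proved result'' --- and your direct verification on simple tensors (the right-action order reversal in i), the Hilbert--Schmidt pairing computation in ii), and the conjugated elementary matrices $C^{ij}=CE^{ij}C^{-1}$ in iii)) is exactly the routine check the authors intended; in particular, you correctly derive the final positivity claim from self-adjointness (via ii)) combined with the positive spectrum, rather than from the eigenbasis alone, which matters since the eigenbasis $\{C^{i_0j_0}\otimes\cdots\otimes C^{i_kj_k}\}$ need not be orthonormal.
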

 
This means that $C_k(s,A) \geq 0$ if $A \geq 0$ and $s \in \Delta_k$, and this justifies the previous lift. Now, evaluating \eqref{fk} amounts to compute the following operators in $\B(\hH_k)$:
\begin{align}
\label{termgeneric}
& T_{k,p}(x)\vc \tfrac{1}{(2 \pi)^{d}} \int_{\Delta_k}ds \int d\xi\, \xi_{\mu_1}\cdots \xi_{\mu_{2p}} \, e^{-\xi_\alpha \xi_\beta \,C_k(s,u^{\alpha\beta}(x))}:\, \hH_k \to \hH_k, \quad p\in \N,\, k\in \N.\\
& T_{0,0}(x) := \tfrac{1}{(2 \pi)^{d}} \int d\xi\, \, e^{-\xi_\alpha \xi_\beta u^{\alpha\beta}(x)} \in M_N \simeq \rho(M_N)\subset \B(\hH_0),\label{term00}
\end{align}
where $T_{k,p}$ depends on $x$ through $u^{\alpha\beta}$ only. 
Their interest stems from the fact they are independent of arguments $B_0\otimes \cdots \otimes B_k \in \hH_k$ on which they are applied, so are the corner stones of this work. Using \eqref{eq-diagram}, the precise link between $T_{k,p}$ and $f_k(\xi)$ is
\begin{align*}
m\circ \kappa^*\circ T_{k,p}=\tfrac{1}{(2\pi)^d} \int d \xi\, \xi_{\mu_1}\cdots \xi_{\mu_{2p}}\, f_k(\xi).
\end{align*}
The fact that $T_{k,p}$ is a bounded operator is justified by the following
\begin{lemma}
The above integrals \eqref{termgeneric} and \eqref{term00} converges and $T_{k,p} \in \B(\hH_k)$.
\end{lemma}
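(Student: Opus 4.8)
The plan is to reduce the whole statement to a single scalar Gaussian estimate, obtained by controlling the operator norm of the integrand $e^{-\xi_\alpha\xi_\beta C_k(s,u^{\alpha\beta}(x))}$ uniformly in $s\in\Delta_k$. First I would use the linearity of $A\mapsto C_k(s,A)$ in its matrix argument to rewrite the exponent as $\xi_\alpha\xi_\beta\,C_k(s,u^{\alpha\beta})=C_k(s,H(\xi))$, where $H(\xi)=u^{\alpha\beta}\xi_\alpha\xi_\beta$ is the principal symbol of \eqref{def-H}. Since each $u^{\mu\nu}$ is selfadjoint, $H(\xi)$ is a Hermitian matrix, and by hypothesis \eqref{Hyp-defpositiv} it is strictly positive for every $\xi\neq 0$; in particular it is diagonalizable with positive spectrum, so Lemma~\ref{LandR} applies to $A=H(\xi)$.

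The key structural step is to read off the spectrum of $C_k(s,H(\xi))$ from Lemma~\ref{LandR}. On the common eigenbasis $C^{i_0j_0}\otimes\cdots\otimes C^{i_kj_k}$ of the commuting selfadjoint operators $R_0(H(\xi)),\dots,R_k(H(\xi))$, the operator $C_k(s,H(\xi))=(1-s_1)R_0+\cdots+s_kR_k$ acts as the scalar $(1-s_1)\lambda_{j_0}+(s_1-s_2)\lambda_{j_1}+\cdots+s_k\lambda_{j_k}$. Because $s\in\Delta_k$, the $k+1$ coefficients are nonnegative and sum to $1$, so this is a convex combination of the eigenvalues $\lambda_j$ of $H(\xi)$. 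Hence every eigenvalue of $C_k(s,H(\xi))$ lies between $\lambda_{\min}(H(\xi))$ and $\lambda_{\max}(H(\xi))$; in particular $C_k(s,H(\xi))\geq \lambda_{\min}(H(\xi))\,\Id$ as a positive operator on $\hH_k$, uniformly in $s$.

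Next I would convert this spectral lower bound into a Gaussian decay estimate. Since $H(\xi)$ is homogeneous of degree two, $\lambda_{\min}(H(\xi))=\abs{\xi}^2\,\lambda_{\min}(H(\xi/\abs{\xi}))$, and by \eqref{Hyp-defpositiv} the continuous function $\omega\mapsto\lambda_{\min}(H(\omega))$ is strictly positive on the compact unit sphere, hence bounded below by some $c>0$. Therefore $\lambda_{\min}(C_k(s,H(\xi)))\geq c\,\abs{\xi}^2$, and since $C_k(s,H(\xi))$ is selfadjoint and positive its exponential obeys $\norm{e^{-C_k(s,H(\xi))}}\leq e^{-c\abs{\xi}^2}$ in operator norm, uniformly in $s\in\Delta_k$.

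Finally, the integral defining $T_{k,p}$ is a Bochner integral of the continuous $\B(\hH_k)$-valued integrand $(s,\xi)\mapsto \xi_{\mu_1}\cdots\xi_{\mu_{2p}}\,e^{-C_k(s,H(\xi))}$ over $\Delta_k\times\R^d$, so the triangle inequality together with the previous bound yields
$$\norm{T_{k,p}}\leq \tfrac{1}{(2\pi)^d}\,\vol(\Delta_k)\int d\xi\,\abs{\xi}^{2p}\,e^{-c\abs{\xi}^2}<\infty,$$
a convergent Gaussian moment integral (with $\vol(\Delta_k)=1/k!$). This gives both the absolute convergence and the membership $T_{k,p}\in\B(\hH_k)$; the case $T_{0,0}$ in \eqref{term00} is the specialization $k=p=0$, where $C_0(s,A)=A$ and the same bound produces a finite matrix in $M_N$. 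The main obstacle is exactly the uniform lower bound $\lambda_{\min}(H(\xi))\geq c\,\abs{\xi}^2$: everything hinges on upgrading the pointwise positivity \eqref{Hyp-defpositiv} to a uniform one, which the homogeneity-plus-compactness argument supplies, whereas the passage through $\hH_k$ via Lemma~\ref{LandR} guarantees that the scalar eigenvalue bound lifts without loss to the operator norm. Once these two ingredients are in place, the remaining estimate is a routine dominated-convergence argument.
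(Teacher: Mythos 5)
Your proof is correct and follows essentially the same route as the paper's: both use Lemma~\ref{LandR} to obtain the uniform lower bound $C_k(s,H(\xi))\geq\lambda_{\min}(H(\xi))\,\bbbone_{\hH_k}$ on the simplex, upgrade it to $\lambda_{\min}(H(\xi))\geq c\,\abs{\xi}^2$ via homogeneity and compactness of the sphere, and conclude with a Gaussian operator-norm estimate. The only cosmetic difference is that the paper evaluates the final bound in spherical coordinates while you keep Cartesian coordinates and invoke the standard Gaussian moment integral.
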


\begin{proof}
We may assume $k\in \N^*$ since for $k=0$, same arguments apply. \\For any strictly positive matrix $A$ with minimal eigenvalues $\lambda_{\min}(A)>0$, Lemma \ref{LandR} shows that, for any $s \in \Delta_k$,
$$
C_k(s,A) \geq [(1-s_1) \lambda_{\min}(A)+(s_1-s_2) \lambda_{\min}(A)+\cdots +s_k\lambda_{\min}(A)] \,\bbbone_{\hH_k} = \lambda_{\min}(A) \,\bbbone_{\hH_k}.
$$
We claim that the map $\xi \in \R^d \mapsto \lambda_{min}( \xi_\alpha\xi_\beta u^{\alpha\beta})$ is continuous: the maps $\xi\in \R^d \mapsto \xi_\alpha\xi_\beta u^{\alpha\beta}$ and $0< a \in \B(\hH_k) \mapsto \inf (\text{spectrum}(a))=\norm{a^{-1}}$ are continuous (the set of invertible matrices is a Lie group). \\
We use spherical coordinates $\xi \in \R^d \to (\abs{\xi}, \sigma)\in \R_+ \times S^{d-1}$,
 where $\sigma\vc \abs{\xi}^{-1} \xi$ is in the Euclidean sphere $S^{d-1}$ endowed with its volume form $d\Omega$. \\
 Then $\lambda_{min}(\xi_\alpha \xi_\beta u^{\alpha\beta})=\abs{\xi}^2\, \lambda_{min}(\sigma_\alpha \sigma_\beta u^{\alpha\beta})>0$ (remark that $\sigma_\alpha \sigma_\beta u^{\alpha\beta}$ is a strictly positive matrix). Thus $c \vc\inf\{\lambda_{min}(\sigma_\alpha \sigma_\beta u^{\alpha\beta})\, \vert \, \sigma\in S^{d-1}\}>0$ by compactness of the sphere. The usual operator-norm of $\hH_k$ applied on the above integral $T_{k,p}\,$, satisfies
\begin{align*}
\norm{T_{k,p}} & \leq \int_{\Delta_k}ds\int_{\sigma \in S^{d-1}} d\Omega_g(\sigma) \int_0^\infty dr\,\norm{r^{d-1}r^{2p} \,\sigma_{\mu_1}\cdots \sigma_{\mu_{2p}}\,e^{-r^2 c\,\bbbone_{\hH_k}}}\\
&  \leq \int_{\Delta_k}ds\int_{\sigma\in S^{d-1}} d\Omega_g(\sigma) \int_0^\infty dr \, r^{d-1+2p} \,e^{-r^2 c}  =\vol(\Delta_k)\,\vol(S_g^{d-1})\, \tfrac{\Gamma(d/2+p)}{2} \,c^{-d/2-p}\,.\quad
\end{align*}
\end{proof}

For the $\xi$-integration of \eqref{termgeneric}, we use again spherical coordinates, but now $\xi=r \,\sigma$ with $r=(g^{\mu\nu}\xi_\mu\xi_\nu)^{1/2}$, $\sigma = r^{-1} \xi \in S^{d-1}_g$ (this sphere depends on $x \in M$ through $g(x)$) and define
\begin{align*}
u[\sigma] \vc u^{\mu\nu}\sigma_\mu\sigma_\nu
\end{align*}
which is a positive definite matrix for any $\sigma \in S_g^{d-1}$. Thus we get
\begin{align}
T_{k,p} & =\tfrac{1}{(2\pi)^d}\int_{\Delta_k}ds\,\int_{S^{d-1}_g} d\Omega_g(\sigma)\,\sigma_{\mu_1}\cdots \sigma_{\mu_{2p}}\int_0^\infty dr\, r^{d-1+2p}\,e^{-r^2 C_k(s,u[\sigma])} \label{Tkp}\\
& = \tfrac{\Gamma(d/2+p)}{2(2\pi)^d} \int_{S^{d-1}_g} d\Omega_g(\sigma)\,\sigma_{\mu_1}\cdots \sigma_{\mu_{2p}} \,\int_{\Delta_k}ds\,\,C_k(s,u[\sigma])^{-(d/2+p)}.\nonumber
\end{align}
Thus, we have to compute the $s$-integration $\int_{\Delta_k} ds\, C_k(s,u[\sigma])^{-\alpha}$ for $\alpha \in \tfrac 12 \N^*$. We do that via functional calculus, using Lemma \ref{LandR} iii), by considering the following integrals
\begin{align}
&I_{\alpha,k}(r_0,r_1,\dots, r_k)  \vc \int_{\Delta_k} ds\, [(1-s_1)r_0+(s_1-s_2)r_1+\dots +s_k r_k]^{-\alpha} \nonumber\\
&\hspace{3.3cm}=\int_{\Delta_k} ds\, [r_0+s_1(r_1-r_0)+\dots +s_k (r_k-r_{k-1})]^{-\alpha} & \label{def Ink}\\
&I_{\alpha,0}(r_0) \vc r_0^{-\alpha}, \,\text{for } \alpha \neq 0, \label{In0}
\end{align}
where $0\neq r_i\in \R_+$ corresponds, in the functional calculus, to positive operator $R_i(u[\sigma])$.\\
Such integrals converge for any $\alpha \in \R$ and any $k\in \N^*$, even if it is applied above only to $\alpha=d/2+k-r \in \tfrac 12 \N$. Nevertheless for technical reasons explained below, it is best to define $I_{\alpha,k}$ for an arbitrary $\alpha\in \R$. \\
In short, the operator $T_{k,p}$ is nothing else than the operator in $\B(\hH_k)$
\begin{align}
\label{Tkp versus Ink}
T_{k,p}=\tfrac{\Gamma(d/2+p)}{2(2\pi)^d} \int_{S^{d-1}_g} d\Omega_g(\sigma)\,\sigma_{\mu_1}\cdots \sigma_{\mu_{2p}} \,I_{d/2+p,k} \big(R_0(u[\sigma]),R_1(u[\sigma]),\dots,R_k(u[\sigma])\big).
\end{align}
Remark that $T_{k,p}$ depends on $x$ via $u[\sigma]$ and the metric $g$. 

\begin{remark}We pause for a while to make a connection with the previous work \cite{AB}. There, the main hypothesis on the matrix $u^{\mu\nu}\xi_\mu\xi_\nu$ is that all its eigenvalues are positive multiples of $g^{\mu\nu}\xi_\mu\xi_\nu$ for any $\xi\neq 0$. Under this hypothesis, we can decompose spectrally $u[\sigma]=\sum_i \lambda_i \pi_i[\sigma]$ where the eigenprojections $\pi_i$ depends on $\sigma$ but not the associated eigenvalues $\lambda_i$. Then, operator functional calculus gives
\begin{align}
\label{I projectors}
I_{d/2+p,k}\big(R_0(u[\sigma]),\dots,R_k(u[\sigma])\big)=\sum_{i_0,\dots,i_k}I_{d/2+p,k}(\lambda_{i_0},\dots,\lambda_{i_k} )\,R_0(\pi_{i_0}[\sigma])\cdots R_k(\pi_{i_k}[\sigma])
\end{align}
and
\begin{align*}
T_{k,p}=\tfrac{\Gamma(d/2+p)}{2(2\pi)^d} \sum_{i_1,\dots,i_k}  \, I_{d/2+p,k} (\lambda_{i_0},\dots,\lambda_{i_k})\int_{S^{d-1}_g} d\Omega_g(\sigma)\,\sigma_{\mu_1}\cdots \sigma_{\mu_{2p}}\,R_0(\pi_{i_0} [\sigma])\cdots R_k(\pi_{i_k}[\sigma])
\end{align*}
where all $\pi_{i_0}[\sigma],\dots,\pi_{i_k}[\sigma]$ commute as operators in $\B(\hH_k)$. However, we do not try to pursue in that direction since it is not very explicit due to the difficult last integral on the sphere; also we remind that we already gave up  in the introduction the use of the eigenprojections for the same reason. Instead, we give for instance a complete computation of $a_1$ in Section \ref{Section-example} directly in terms of matrices $u^{\mu\nu},v^\mu,w$, avoiding this spectral splitting in the particular case where $u^{\mu\nu}=g^{\mu\nu} u$ for a positive matrix $u$.
\end{remark}

In conclusion, the above approach really reduces, as claimed in the introduction, the computation of all $a_r(x)$ for an arbitrary integer $r$ to the control of operators $T_{k,p}$ which encode all difficulties since, once known, their applications on an arbitrary variable $B_1\otimes \cdots\otimes B_k$ with $B_i\in M_n$ are purely mechanical. For instance in any dimension of $M$, the calculus of $a_1(x)$ needs only to know, $T_{1,0},\, T_{2,1},\,T_{3,2},$ $T_{4,3}$. 
More generally, we have the following

\begin{lemma}
\label{number to calculate}
For any dimension $d$ of the manifold $M$ and $x\in M$, given $r\in \N$, the computation of $a_r(P)$ needs exactly to know each of the $3r+1$ operators $T_{k,k-r}$ where $r \leq k \leq 4r$ or equivalently to know $I_{d/2,r}, \,I_{d/2+1,r+1},\dots,\,I_{d/2+3r,4r}$.
\end{lemma}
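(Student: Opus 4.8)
The plan is to follow the term bookkeeping already set up in Section~\ref{Section-algebraic}, tracking three integers attached to each monomial produced in the expansion of \eqref{generateur de t}: the power of $t$, the number $k$ of tensor slots (i.e.\ the index of the $f_k$ appearing), and the number $2p$ of explicit factors of $\xi$. The statement will follow once I show that these are constrained by one invariant and two inequalities.

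\medskip\noindent\emph{Power of $t$.} First I would expand \eqref{generateur de t} and isolate the coefficient of $t^r$. A monomial $f_{k_0}[C_1\otimes\cdots\otimes C_{k_0}]$ in which $n_K$ of the $k_0$ arguments are summands of $\sqrt{t}\,K$ and $n_P=k_0-n_K$ are summands of $t\,P$ carries the factor $t^{\,n_K/2+n_P}$. Selecting the power $t^r$ forces $\tfrac12 n_K+n_P=r$, whence $n_K=2(k_0-r)$ and $n_P=2r-k_0$; the constraints $n_K,n_P\geq 0$ give $r\leq k_0\leq 2r$ (and $n_K$ is automatically even, consistent with the vanishing of odd $\xi$-integrals).

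\medskip\noindent\emph{The invariant $p=k-r$.} Next I would apply Lemma~\ref{fk avec derivee} repeatedly to strip every $\partial$ from the arguments. Each time a derivative lands on an exponential it produces one insertion of $\partial H$, raising the slot number by one and—because $\partial H=(\partial u^{\mu\nu})\xi_\mu\xi_\nu$—adding exactly two factors of $\xi$; landing on a coefficient $B_j$ changes neither count. Writing $m$ for the number of such insertions, the final slot number is $k=k_0+m$, while, since each $K$ carries exactly one $\xi$ by \eqref{def-K} and each $\partial H$ carries two, the total $\xi$-degree is $2p=n_K+2m$. Substituting $n_K=2(k_0-r)$ gives $2p=2(k_0+m)-2r=2(k-r)$, i.e.\ the invariant $p=k-r$ holds for every surviving term. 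Thus every contribution to $a_r(x)$ is built from operators of the form $T_{k,k-r}$, which settles the \emph{shape} of the index and reduces everything to bounding $k$.

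\medskip\noindent\emph{Range of $k$ and count.} To bound $k=k_0+m$ I would note that the number of derivatives available in a monomial is maximal when every $K$-slot uses its term $u^{\mu\nu}\xi_\mu\partial_\nu$ and every $P$-slot its term $u^{\mu\nu}\partial_\mu\partial_\nu$, giving $D_{\max}=n_K+2n_P=2(k_0-r)+2(2r-k_0)=2r$, independently of $k_0$. Since $m\leq D_{\max}=2r$ and $k_0\leq 2r$ we get $k\leq 4r$, while $m\geq 0$ and $k_0\geq r$ give $k\geq r$. For surjectivity onto all of $[r,4r]$ I would exhibit realizing terms: with $k_0=r$ (all arguments $P$) one can choose the derivative content so that $D=m_0$ for any $0\leq m_0\leq 2r$ and send every derivative to an exponential (always allowed by the second sum in \eqref{eq-fk avec derivee}, with $j\geq i$), reaching $k=r+m_0$ and hence all of $[r,3r]$; with $k_0=2r$ (all arguments $K$) the same device reaches $k=2r+m_0$ and hence all of $[2r,4r]$. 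The union is $[r,4r]$, giving exactly $4r-r+1=3r+1$ values of $k$, each requiring the single operator $T_{k,k-r}$. Finally the equivalent statement in terms of the $I$'s is immediate from \eqref{Tkp versus Ink}: $T_{k,k-r}$ is assembled from $I_{d/2+(k-r),\,k}$, so letting $k$ run over $r,r+1,\dots,4r$ produces exactly $I_{d/2,r},I_{d/2+1,r+1},\dots,I_{d/2+3r,4r}$.

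\medskip The main obstacle I expect is the surjectivity step: one must verify that the terms realizing each intermediate value of $k$ genuinely survive. The $\xi$-parity issue is harmless because $2p=n_K+2m$ is even by construction, so the evenness of $f_k$ in $\xi$ never annihilates these contributions; the real point is to argue that for generic $u^{\mu\nu},v^\mu,w$ no algebraic cancellation removes a given $T_{k,k-r}$, so that all $3r+1$ operators are truly needed and not merely admissible. Everything else is the bookkeeping carried out above.
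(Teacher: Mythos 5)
Your proof is correct and follows essentially the same route as the paper's own: the $t$-power count $\tfrac12 n_K+n_P=r$, the invariant $k-p=r$ propagated through Lemma~\ref{fk avec derivee}, the resulting bound $r\le k\le 4r$, and the realization of every intermediate $k$ by all-$P$ and all-$K$ terms, with the $I$'s read off from \eqref{Tkp versus Ink}. Your one deviation—realizing each $k$ by varying the derivative content of the arguments and sending \emph{every} derivative into a $\partial H$ insertion, rather than fixing the maximal derivative content and splitting derivatives between exponentials and arguments as the paper does—is a harmless (indeed slightly cleaner) variant, and your closing caveat about possible algebraic cancellations applies equally to the paper's proof, which likewise does not address it.
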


\begin{proof}
As seen in \eqref{generateur de t}, using the linearity of $f_k$ in each argument, we may assume that in $f_k[B_1\otimes \cdots \otimes B_k]$ each argument $B_i$ is equal to $K$ or $P$, so generates $t^{1/2}$ or $t$ in the asymptotic expansion. Let $n_K$ and $n_P$ the number of $K$ and $P$ for such $f_k$ involved in $a_r(P)$. Since $a_r(P)$ is the coefficient of $t^r$, we have $\tfrac 12n_K+n_P=r$ and $k\geq r$. In particular, $n_K$ much be even.

When $B_i=K= -i\xi_\mu[v^\mu(x) +2u^{\mu\nu}(x) \,\pnu]$, again by linearity, we may assume that the argument in $f(k,p) \vc f_k(\xi)[B_1(\xi)\otimes \cdots \otimes B_k(\xi)]$ is a polynomial of order $2p$ since odd order are cancelled out after the $\xi$-integration. In such $f(k,p)$, the number of $\xi$ (in the argument) is equal to $n_K$, so that $p=\tfrac 12 n_K$, and the number of derivations $\partial$ is $n_K+2n_P$. 
\\
We count now all $f(k,p)$ involved in the computation of $a_r(P)$. We initiate the process with $(k,p)=(n_K+n_P,\tfrac 12 n_K)$, so $k-p=r$ and after the successive propagation of $\partial$ as in Lemma \ref{fk avec derivee}, we end up with $(k',p')$ where $k'-p'=r$: in \eqref{eq-fk avec derivee}, $k \to k+1$ while $p\to p+1$ since $\partial H$ appears as a new argument. So $(k',p')=(k',k'-r)$ and the maximum of $k'$ is 
$2n_K+3n_P$. Here, $n_P = 0,\dots,r$ and $n_K=0,\dots ,2r$, thus the maximum is for $k'=4r$.\\
All $f(k,k-r)$ with $r\leq k \leq 4r$ will be necessary to compute $a_r(P)$: Let $k$ be such that $r\leq k\leq 3r$. Then a term $f(k,k-r)$ will be obtained by the use of Lemma \ref{fk avec derivee} applied on $f_r[u^{\mu_1\nu_1}\partial_{\mu_1}\partial_{\nu_1}\otimes \cdots \otimes u^{\mu_r\nu_r}\partial_{\mu_r}\partial_{\nu_r}]$ with an action of $k-r$ derivatives on the $e^{sH}$ and the reminder on the $B_i$. The same argument, applied to $f_{2r}[\xi_{\mu_1} u^{\mu_1\nu_1}\partial_{\nu_1} \otimes \cdots \otimes \xi_{\mu_{2r}} u^{\mu_{2r}\nu_{2r}}\partial_{\nu_{2r}}]$, also generates a term  $f(k,k-r)$ when $2r \leq k\leq 4r$.
\\
Finally, remark that we can swap the $\xi$-dependence of $f(k,p)$ into the definition \eqref{termgeneric} of $T_{k,p}$ to end up with integrals which are advantageously independent of arguments $B_i$.

The case $r=0$ is peculiar: since $k=0$ automatically, we have only to compute $T_{0,0}$ in \eqref{term00} which gives $a_0(x)$ by \eqref{a0(x)}.
\\
The link between the $T$'s and the $I$ is given in \eqref{Tkp versus Ink}.
\\
All previous reasoning is dimension-free.
\end{proof}

Of course, in an explicit computation of $a_r(x)$, each of these $3r+1$ operators $T_{k,k-r}$ can be used several times since applied on different arguments $B_1\otimes \cdots \otimes B_k$. The integral $I_{d/2+p,k}$ giving $T_k,p$ in \eqref{Tkp versus Ink} will be explicitly computed in Section~\ref{Section-Integral}.

We now list the terms of $a_1(x)$. Using the shortcuts 
\begin{align*}
\bar v \vc \xi_\mu v^\mu, \qquad \bar u^\nu \vc \xi_\mu u^{\mu\nu},
\end{align*}
starting from~\eqref{eq-a1(x)} and applying Lemma~\ref{fk avec derivee}, we get 
\begin{align}
& -f_1[P] =-f_2[u^{\mu\nu} \otimes \pmu\pnu H]+2 f_3[u^{\mu\nu} \otimes \pmu H\otimes \pnu H]
-f_2[v^\mu \otimes \pmu H] + f_1[w] , \label{f1[P]}\\
&  f_2[K\otimes K]  =-f_2[\bar v \otimes \bar v] +2 f_3[\bar v\otimes \bar u^\nu \otimes \pnu H]-2 f_2[\bar u^\mu \otimes \pmu \bar v] +2 f_3[\bar u^\mu \otimes \pmu H \otimes \bar v]   \nonumber\\
 & \hspace{2.4cm} +2 f_3[\bar u^\mu \otimes \bar v\otimes \pmu H] + 4 f_3[\bar u^\mu \otimes \pmu \bar u^\nu \otimes \pnu H]+4f_3[\bar u^\mu \otimes \bar u ^\nu \otimes \pmu\pnu H] \nonumber\\
& \hspace{2.4cm} -4f_4[\bar u^\mu \otimes \pmu H\otimes \bar u ^\nu \otimes \pnu H] -4 f_4[\bar u^\mu \otimes \bar u^\nu \otimes \pmu H \otimes \pnu H] \nonumber\\
& \hspace{2.4cm} -4f_4[\bar u^\mu \otimes \bar u^\nu \otimes \pnu H \otimes \pnu H] \label{f2[K tenseur K]}.
\end{align}
This represents 14 terms to compute for getting $a_1(x)$.

\section{\texorpdfstring{Integral computations of $I_{\alpha,k}$}{Integral computations of Ialpha,k}}
\label{Section-Integral}

We begin with few interesting remarks on $I_{\alpha,k}$ defined in \eqref{def Ink} and \eqref{In0}:
\begin{proposition}
\label{properties of Ink}
Properties of functions $I_{\alpha,k}$:

i) Recursive formula valid for $1\neq \alpha\in \R$ and $k\in \N^*$:
\begin{align}
\label{recursive}
I_{\alpha,k}(r_0,\dots,r_k) = \tfrac{1}{(\alpha-1)}(r_{k-1}-r_k)^{-1}[I_{\alpha-1,k-1}(r_0,\dots,r_{k-2},r_k)-I_{\alpha-1,k-1} (r_0,\dots,r_{k-1})].
\end{align}
(The abandoned case $I_{1,k}$ is computed in Proposition \ref{n<k}.)

ii) Symmetry with respect to last two variables:
\begin{align*}
I_{\alpha,k}(r_0,\dots ,r_{k-1},r_k)=I_{\alpha,k}(r_0,\dots,r_k,r_{k-1}).
\end{align*}

iii) Continuity:\\
The functions $I_{\alpha,k} : (\R_+^*)^{k+1} \to \R_+^* $ are continuous for all $\alpha\in \R$.
\end{proposition}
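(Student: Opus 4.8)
The plan is to prove all three statements directly from the integral definition \eqref{def Ink}, exploiting one structural observation: on $\Delta_k$ the coefficients $(1-s_1),(s_1-s_2),\dots,(s_{k-1}-s_k),s_k$ are all nonnegative and sum to $1$, so the bracketed quantity $c(s)\vc (1-s_1)r_0+\cdots+s_k r_k$ is a convex combination of $r_0,\dots,r_k$. For the continuity statement (iii) this observation does the whole job: if $(r_0,\dots,r_k)$ ranges over a compact set $K\subset(\R_+^*)^{k+1}$, then $0<\min_i r_i\le c(s)\le\max_i r_i$ uniformly in $s\in\Delta_k$, so the integrand $[c(s)]^{-\alpha}$ is bounded by a constant depending only on $K$ and $\alpha$ (use $\min_i r_i$ when $\alpha\ge 0$ and $\max_i r_i$ when $\alpha<0$). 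Since $(s,r)\mapsto[c(s)]^{-\alpha}$ is jointly continuous on the compact set $\Delta_k\times K$, continuity of $I_{\alpha,k}$ follows from the standard theorem on integrals depending continuously on a parameter (equivalently dominated convergence). That the value lies in $\R_+^*$ is immediate, the integrand being strictly positive and $\vol(\Delta_k)=1/k!>0$.

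For the recursion (i), I would integrate out the innermost variable $s_k$, whose range in $\Delta_k$ is $[0,s_{k-1}]$ and which enters only through the term $s_k(r_k-r_{k-1})$. Writing $c(s)=g(s_1,\dots,s_{k-1})+s_k(r_k-r_{k-1})$, where $g$ collects the remaining terms, and using $\alpha\neq 1$, $r_k\neq r_{k-1}$, an elementary antiderivative gives $\int_0^{s_{k-1}}[g+s_k(r_k-r_{k-1})]^{-\alpha}\,ds_k=\frac{1}{(1-\alpha)(r_k-r_{k-1})}\big([g+s_{k-1}(r_k-r_{k-1})]^{1-\alpha}-g^{1-\alpha}\big)$. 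The decisive bookkeeping step is to recognise the two boundary terms: $g$ is exactly the integrand of $I_{\alpha-1,k-1}(r_0,\dots,r_{k-1})$, while the upper-limit term $g+s_{k-1}(r_k-r_{k-1})$ merges the last two slots, its final contribution becoming $s_{k-1}(r_k-r_{k-2})$, i.e. it is the integrand of $I_{\alpha-1,k-1}(r_0,\dots,r_{k-2},r_k)$. Integrating the surviving variables over $\Delta_{k-1}$ and rewriting $\frac{1}{(1-\alpha)(r_k-r_{k-1})}=\frac{1}{(\alpha-1)}(r_{k-1}-r_k)^{-1}$ yields \eqref{recursive}.

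For the symmetry (ii), I would apply the involutive change of variables $s_k\mapsto s_{k-1}-s_k$, keeping all other $s_j$ fixed. It maps the range $0\le s_k\le s_{k-1}$ onto itself and is measure-preserving (Jacobian $-1$). Under it the coefficient $(s_{k-1}-s_k)$ of $r_{k-1}$ and the coefficient $s_k$ of $r_k$ are interchanged, so the transformed integrand is precisely the integrand of $I_{\alpha,k}(r_0,\dots,r_{k-2},r_k,r_{k-1})$; since the domain of integration is unchanged, the two integrals coincide, which is the claim.

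The main obstacle is the index bookkeeping in (i): one must see that the upper boundary value of the $s_k$-integration fuses the $(k-1)$-th and $k$-th arguments into a single convex-combination slot, so that it genuinely reproduces an $I_{\alpha-1,k-1}$ with a shifted argument list rather than some new integral. Once this is spotted, everything else — the uniform convex-combination bound behind (iii) and the symmetrising substitution behind (ii) — is routine.
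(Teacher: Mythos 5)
Your proof is correct, and for parts (i) and (iii) it is essentially the paper's own argument: (i) is proved there too by integrating out the innermost variable $s_k$, with the same observation that the upper-limit boundary term fuses the last two slots into $s_{k-1}(r_k-r_{k-2})$, i.e.\ reproduces the integrand of $I_{\alpha-1,k-1}(r_0,\dots,r_{k-2},r_k)$; (iii) is proved there by the same uniform bound (via the convex-combination structure of the integrand) plus dominated convergence on the compact simplex.

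Where you genuinely diverge is (ii). The paper only performs the substitution $s\mapsto 1-s$ in the base case $k=1$, and then extends the symmetry to all $k$ by invoking the recursion \eqref{recursive}, whose right-hand side is invariant under $r_{k-1}\leftrightarrow r_k$ (both the prefactor $(r_{k-1}-r_k)^{-1}$ and the bracket change sign). Your argument instead applies the involution $s_k\mapsto s_{k-1}-s_k$ directly for arbitrary $k$, which swaps the coefficients of $r_{k-1}$ and $r_k$ in the integrand while preserving the domain and the measure. This buys two things: it is self-contained (no appeal to (i)), and it is valid uniformly for \emph{all} $\alpha\in\R$, in particular for $\alpha=1$, where the paper's recursion-based step is not available since \eqref{recursive} requires $\alpha\neq 1$ (strictly, the paper's route needs a separate limiting or direct argument at $\alpha=1$). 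So your treatment of (ii) is both more elementary and slightly more complete.
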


\begin{proof}
i) In $I_{\alpha,k}(r_0,\dots,r_k)=\int_0^1ds_1\int_0^{s_1} ds_2 \cdots \int_0^{s_{k-1}} ds_k\,[r_0+s_1(r_1-r_0)+\dots+s_k(r_k-r_{k-1})]^{-\alpha}$, the last integral is equal to 
\begin{align*}
\tfrac{1}{(\alpha-1)}(r_{k-1}-r_k)^{-1}\,[&(r_0+s_1(r_1-r_0)+\dots +s_{k-1}(r_{k-1}-r_{k-2})+ s_{k-1}(r_k-r_{k-1}))^{-(\alpha-1)} \\
&-(r_0+s_1(r_1-r_0)+\dots +s_{k-1}(r_{k-1}-r_{k-2}))^{-(\alpha-1)} ]
\end{align*}
which gives the claimed relation. One checks directly from the definition \eqref{def Ink} of $I_{\alpha+1,1}(r_0,r_1)$ that \eqref{recursive} is satisfied for the given $I_{\alpha,0}$.

ii) $I_{\alpha,1}(r_0,r_1)=\int_0^1ds\, r_0+s(r_1-r_0)]^{-\alpha}=\int_0^1 ds' \,[r_1+s'(r_0-r_1)]^{-\alpha} = I_{\alpha,1}(r_1,r_0)$ after the change of variable $s \to s'=1-s$. 
The symmetry follows now using \eqref{recursive} for a recurrence process.

iii) The map $g(s, \bar{r}) \vc [(1-s_1)r_0+(s_1-s_2)r_1+\dots +s_k r_k]^{-\alpha} >0$ is continuous at the point  $\bar{r}\in (\R_+^*)^{k+1}$ and uniformly bounded in a small ball $\B$ around $\bar{r}$ since then, $g(s,\bar{r}) \leq \max(r_{min}^{-\abs{\alpha}},r_{max}^{\abs{\alpha}})$ where $r_{min \text{ or } max} \vc \text{min or max} \{r_i \, \vert \, \bar{r} \in \B\}>0$. Thus, since the integration domain $\Delta_k$ is compact, by Lebesgue's dominated convergence theorem, we get the continuity of $I_{\alpha,k}$.
\end{proof}

From Lemma~\ref{number to calculate} and \eqref{Tkp versus Ink}, the computation of $a_r(P)$, for $r \geq 1$ (since $a_0$ is known already by \eqref{a_0(x) cas gmunu}) in spatial dimension $d \geq 1$, requires all $I_{d/2+k-r,k}$ for $r\leq k\leq 4r$. The sequence $I_{d/2,r}, I_{d/2+1,r+1}, \dots, I_{d/2+3r,4r}$ belongs to the same recursive relation~\eqref{recursive}, except if there is a $s \in \N$ such that $d/2 + s = 1$, which can only happen with $d=2$ and $s=0$ (see Case~1 below). The computation of this sequence requires then to compute $I_{d/2,r}$ as the root of~\eqref{recursive}.

The function $I_{d/2,r}$ can itself be computed using \eqref{recursive} when this relation is relevant. \\
Case 1: $d$ is even and $d/2 \leq r$, recursive sequence \eqref{recursive} fails at $I_{1, r-d/2+1}$:
\begin{equation}
I_{1, r-d/2+1} \to I_{2, r-d/2+2} \to \cdots \to \underbrace{I_{d/2,r} \to I_{d/2+1,r+1} \to \cdots \to I_{d/2+3r,4r}}_{\text{used to compute $a_r(x)$}}. \label{chain 1}
\end{equation}
Case 2: $d$ is even and $r < d/2$, relation \eqref{recursive} never fails and
\begin{equation}
I_{d/2-r,0} \to I_{d/2-r+1,1} \to I_{d/2-r+2,2} \to \cdots \to \underbrace{I_{d/2,r} \to I_{d/2+1,r+1} \to \cdots \to I_{d/2+3r,4r}}_{\text{used to compute $a_r(x)$}}. \label{chain 2}
\end{equation}
Case 3: $d$ is odd, relation \eqref{recursive} never fails and
\begin{equation}
I_{d/2-r,0} \to I_{d/2-r+1,1} \to I_{d/2-r+2,2} \to \cdots \to \underbrace{I_{d/2,r} \to I_{d/2+1,r+1} \to \cdots \to I_{d/2+3r,4r}}_{\text{used to compute $a_r(x)$}}. \label{chain 3}
\end{equation}
In the latter case, the root is $I_{\alpha, 0}$ with $\alpha = d/2-r$ half-integer, positive or negative and both situation have to be considered separately.

The recursive relation \eqref{recursive}, which for $I_{\alpha,k}$ follows from the integration on the $k$-simplex $\Delta_k$, has a generic solution:

\begin{proposition}
\label{solution generale} Given $\alpha_0\in \R$, $k_0\in \N$ and a function $F: \R_+^* \to  \R_+^*$, let the function $J_{\alpha_0 +k_0,k_0}: (\R_+^*)^{k+1} \to \R^*_+$ be defined by
\begin{align*}
 J_{\alpha_0 +k_0,k_0} (r_0,\dots ,r_k) \vc c_{\alpha_0+k_0 ,k_0} \,\sum_{i=0}^{k_0} \,\big[\prod^{k_0}_{\substack{j=0 \\j\neq i}}\, (r_i-r_j)^{-1} \big] F(r_i).
\end{align*}

i) Ascending chain: Then, all functions $J_{\alpha_0+k,k}$ obtained by applying the recurrence formula \eqref{recursive} for any $k\in \N, \,k \geq k_0$ have the same form: 
\begin{align}
\label{Jalphak3}
J_{\alpha_0 +k,k} (r_0,\dots ,r_k) \vc c_{\alpha_0+k ,k} \,\sum_{i=0}^k \,\big[\prod^k_{\substack{j=0 \\j\neq i}}\, (r_i-r_j)^{-1} \big] F(r_i)
\end{align}
with
\begin{equation}
\label{calpha}
c_{\alpha_0+k,k}= \tfrac{(-1)^{k-k_0}}{(\alpha_0 +k_0)\cdots(\alpha_0+k-1)}\, c_{\alpha_0+k_0,k_0}\,\text{ for } k>k_0\,.
\end{equation}

ii) Descending chain: when $\alpha_0 \in \R \backslash \{-\N\}$, the functions $J_{\alpha_0+k,k}$ defined by \eqref{Jalphak3} for $k\in \N^*$ starting with $k_0=0$, the root $F(r_0)=J_{\alpha_0,0}(r_0)$ and 
\begin{equation}
\label{calpha1}
c_{\alpha_0+k,k}= \tfrac{(-1)^k}{\alpha_0 (\alpha_0+1)\cdots(\alpha_0+k-1)}
\end{equation}
satisfy \eqref{recursive}.
\end{proposition}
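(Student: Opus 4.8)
The plan is to recognize that, up to the scalar $c_{\alpha_0+k,k}$, the function $J_{\alpha_0+k,k}(r_0,\dots,r_k)$ is the $k$-th \emph{divided difference} of the root $F$ at the nodes $r_0,\dots,r_k$, and that the recurrence \eqref{recursive} is precisely the classical divided-difference recurrence with respect to the last two nodes $r_{k-1},r_k$, weighted by the scalar factor $\tfrac{1}{\alpha-1}$ with $\alpha=\alpha_0+k$. Both parts then follow from one rational identity: part (i) by induction on $k\ge k_0$, and part (ii) as the specialization $k_0=0$ (where $c_{\alpha_0,0}=1$ and \eqref{calpha} reduces to \eqref{calpha1}), the hypothesis $\alpha_0\in\R\setminus(-\N)$ serving only to keep the denominators in \eqref{calpha1} nonzero.

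First I would set up the induction for (i). The base case $k=k_0$ is the definition of $J_{\alpha_0+k_0,k_0}$. For the step I assume $J_{\alpha_0+k-1,k-1}$ has the form \eqref{Jalphak3} with coefficient \eqref{calpha}, and substitute its two instances at the node sets $(r_0,\dots,r_{k-2},r_k)$ and $(r_0,\dots,r_{k-1})$ into the right-hand side of \eqref{recursive}. The common scalar $c_{\alpha_0+k-1,k-1}$ factors out, reducing the claim to a purely rational identity between sums over the nodes.

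This identity is the heart of the matter, and I would prove it by matching the coefficient of each $F(r_i)$. For an interior node $r_i$ with $0\le i\le k-2$, the two $(k-1)$-node products share $\prod_{j\le k-2,\,j\neq i}(r_i-r_j)^{-1}$ and differ only in the last factor $(r_i-r_k)^{-1}$ versus $(r_i-r_{k-1})^{-1}$; placing these over a common denominator produces
\[
(r_i-r_k)^{-1}-(r_i-r_{k-1})^{-1}=\frac{r_k-r_{k-1}}{(r_i-r_k)(r_i-r_{k-1})},
\]
so that after multiplication by $(r_{k-1}-r_k)^{-1}$ one obtains exactly $-\prod_{j\neq i}(r_i-r_j)^{-1}$ taken over all $k+1$ nodes. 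For the two boundary nodes $r_{k-1}$ and $r_k$, each of which occurs in only one of the two sums, the single surviving product already combines with $(r_{k-1}-r_k)^{-1}$ to give the corresponding full $(k+1)$-node product, again up to the sign $-1$. Collecting the overall scalar $\tfrac{-1}{\alpha_0+k-1}\,c_{\alpha_0+k-1,k-1}$ reproduces $c_{\alpha_0+k,k}$ exactly as in \eqref{calpha}, closing the induction.

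For part (ii) I would then observe that the family defined directly by \eqref{Jalphak3} with $k_0=0$, root $F=J_{\alpha_0,0}$ and scalars \eqref{calpha1} coincides with the ascending chain of (i) started at $k_0=0$; hence the same rational identity shows it satisfies \eqref{recursive}. The only use of $\alpha_0\in\R\setminus(-\N)$ is that the denominator $\alpha_0(\alpha_0+1)\cdots(\alpha_0+k-1)$ never vanishes, so every $c_{\alpha_0+k,k}$ is well defined along the chain. I expect no analytic difficulty: everything is an algebraic identity among continuous functions on the locus of distinct nodes (coincident-node values being governed by the continuity of Proposition \ref{properties of Ink} iii), so the single real obstacle is the careful sign- and shared-factor bookkeeping in the coefficient-matching identity above.
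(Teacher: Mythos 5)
Your proposal is correct and follows essentially the same route as the paper's proof: both establish the one-step claim that the recurrence \eqref{recursive} preserves the form \eqref{Jalphak3} by matching the coefficient of each $F(r_i)$ — combining the two interior-node products over a common denominator via $(r_i-r_k)^{-1}-(r_i-r_{k-1})^{-1}=(r_k-r_{k-1})(r_i-r_k)^{-1}(r_i-r_{k-1})^{-1}$ and absorbing the boundary nodes $r_{k-1},r_k$ directly — then collect the scalar $-\tfrac{1}{\alpha_0+k-1}$ to obtain \eqref{calpha}, with part (ii) as the $k_0=0$ specialization where $\alpha_0\notin -\N$ only guarantees nonvanishing denominators. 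Your identification of $J_{\alpha_0+k,k}$ as a scaled divided difference of $F$ is a nice conceptual gloss, but the underlying computation is identical to the paper's.
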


\begin{proof}
i) It is sufficient to show that
\begin{align*}
X\vc\tfrac{1}{\alpha-1}(r_{\ell-1}-r_\ell)^{-1} [ J_{\alpha-1,\ell-1}(r_0,\dots,r_{\ell-2},r_\ell)-J_{\alpha-1,\ell-1}(r_0,\dots,r_{\ell-1})].
\end{align*}
has precisely the form \eqref{Jalphak3} for $\ell=k_0+1$ and $\alpha=\alpha_0+k_0+1$.
We have
\begin{multline}
X= \tfrac{c_{\alpha-1,\ell-1}}{\alpha-1}(r_{\ell-1}-r_\ell)^{-1}\Big[\,
\sum_{i=0}^{\ell-2}\, \big( \prod^{\ell-2}_{\substack{j=0\\j\neq i }}(r_i - r_j)^{-1} \big) (r_i - r_\ell)^{-1} \, F(r_i) 
+ \prod^{\ell-2}_{j=0}(r_\ell-r_i)^{-1} \,F(r_\ell) 
\\ 
-\sum_{i=0}^{\ell-2}\, \big( \prod^{\ell-2}_{\substack{j=0\\j\neq i }}(r_i - r_j)^{-1} \big) (r_i - r_{\ell-1})^{-1} \, F(r_i) 
- \prod^{\ell-2}_{j=0}(r_{\ell-1}-r_i)^{-1} \,F(r_{\ell-1}) \,
\Big]. \label{X}
\end{multline}
We can combine the two sums on $i=0, \dots, \ell-2$ as:
\begin{equation*}
\sum_{j=0}^{\ell-2} \big(\prod^{\ell-2}_{\substack{j=0\\ j \neq i}}  (r_i-r_j)^{-1} \big) 
[(r_i-r_\ell)^{-1} -(r_i-r_{\ell-1})^{-1}] \,F(r_i) 
=
(r_\ell-r_{\ell-1}) \sum_{j=0}^{\ell-2}\, \big( \prod^{\ell}_{\substack{j=0\\ j \neq i}}\,  (r_i-r_j)^{-1} \big) \, F(r_i).
\end{equation*}
Including $(r_{\ell-1}-r_\ell)^{-1}$, the others terms in \eqref{X} corresponds to $[ \prod^{\ell}_{\substack{j=0\\ j \neq i}}\,  (r_i-r_j)^{-1} ] \, F(r_i)$ for $i = \ell-1$ and $i=\ell$ (up to a sign), so that
\begin{equation*}
X= - \tfrac{c_{\alpha-1,\ell-1}}{\alpha-1} \,\sum_{j=0}^{\ell}\, \big( \prod^{\ell}_{\substack{j=0\\ j \neq i}}\,  (r_i-r_j)^{-1} \big) \, F(r_i)
\end{equation*}
which yields $c_{\alpha_0+k_0+1,k_0+1}=-\tfrac{1}{\alpha_0+k_0}\,c_{\alpha_0+k_0,k_0}$ and so the claim \eqref{calpha}.

ii) It is the same argument with $k_0=0$ and the hypothesis $\alpha_0 \notin -\N$  guarantees the existence of \eqref{calpha1} and moreover $c_{\alpha_0,0}=1$.
\end{proof}

Proposition~\ref{solution generale} exhibits the general solution of Cases 2 and 3 in \eqref{chain 2} and \eqref{chain 3} (with $\alpha_0 =d/2-r$, for $d$ even and $\alpha_0>0$, or for $d$ odd), with, for both, $F(r_0) = I_{d/2-r,0}(r_0)= r_0^{-\alpha_0}$, so that
\begin{align}
I_{d/2-r+k,k}(r_0,\dots,r_k) =\tfrac{(-1)^k}{(d/2-r)(d/2-r+1)\cdots(d/2-r+k-1)} \,\sum_{i=0}^k \,\big[\prod^k_{\substack{j=0 \\j\neq i}}\, (r_i-r_j)^{-1} \big] r_i^{-(d/2-r)}. \label{I cases 2 3}
\end{align}
To control the reminder Case 1 of chain \eqref{chain 1} where $\alpha_0\in -\N$ (so for $d$ even and $\alpha_0 =d/2-r \leq 0$), we need to compute the functions $I_{1,k}$ for $k=r-d/2+1$. This is done below and shows surprisingly enough that the generic solution of Proposition~\ref{solution generale} holds true also for a different function $F(r_0)$.

\begin{proposition}
\label{n<k}
Case 1: $d$ even and $d/2\leq r$. \\
For any $n \in \N$ and $k \in \N^*$,
\begin{align}
I_{n+1,k+n}(r_0,\dots,r_{n+k})= \tfrac{(-1)^n}{n!\,(k-1)!}\,\sum_{i=0}^{n+k} \,\,\,\prod_{\substack{j=0\\ j\neq i}}^{n+k}\, \,(r_i-r_j)^{-1} r_i^{k-1} \log r_i \label{I1k}.
\end{align}
Thus, the solution \eqref{I1k} written as $I_{d/2-r+k,k}$ corresponds to $J_{\alpha_0+k,k}$, with $\alpha_0=d/2-r \in -\N$, in \eqref{Jalphak3} with $F(r_0)=r_0^{-\alpha_0} \log r_0$ and $c_{1,1-\alpha_0}=\tfrac{1}{(-\alpha_0)!}\,$.
\end{proposition}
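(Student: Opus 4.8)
The goal is to establish the closed form \eqref{I1k} for $I_{n+1,k+n}$, the ``root'' integral that sits at the failure point of the recursion \eqref{recursive} in Case~1. The plan is to prove \eqref{I1k} by a double induction, first pinning down the base case $n=0$ (that is, $I_{1,k}$ for all $k\in\N^*$) by direct integration, then propagating upward in $n$ by the recursive relation \eqref{recursive}, which is legitimate here because for the indices in \eqref{I1k} the first subscript $n+1\geq 2$ never equals $1$, so the recursion is always applicable when climbing from $I_{1,k}$ toward larger first index.

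\emph{Base case.} First I would compute $I_{1,k}(r_0,\dots,r_k)$ directly. By definition \eqref{def Ink}, $I_{1,k}=\int_{\Delta_k}ds\,[r_0+s_1(r_1-r_0)+\dots+s_k(r_k-r_{k-1})]^{-1}$. The natural approach is to integrate the simplex variables one at a time; each integration of an affine-linear form raised to the power $-1$ produces a logarithm, and after all $k$ integrations one expects a sum over $i$ of terms $\prod_{j\neq i}(r_i-r_j)^{-1}\,r_i^{k-1}\log r_i$ up to the combinatorial prefactor $\tfrac{1}{(k-1)!}$. The cleanest route is to recognize $I_{1,k}$ as a divided-difference / Hermite--Genocchi type expression: the simplex integral of $g$ composed with the affine interpolation of the nodes $r_0,\dots,r_k$ equals the $k$-th divided difference of an antiderivative, and taking $g(t)=t^{-1}$ gives antiderivative $\log t$, whose $k$-th divided difference on distinct nodes is exactly $\sum_i\big[\prod_{j\neq i}(r_i-r_j)^{-1}\big]r_i^{k-1}\log r_i$ times the appropriate factorial. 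Either the iterated elementary integration or the divided-difference identity settles $n=0$.

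\emph{Inductive step.} Assuming \eqref{I1k} holds for a given $n$ and all $k\in\N^*$, I would apply \eqref{recursive} with $\alpha=n+2$ to obtain $I_{n+2,(k-1)+(n+1)}$ from $I_{n+1,\cdot}$; alternatively, and more in the spirit of Proposition \ref{solution generale}, I would simply verify that the right-hand side of \eqref{I1k}, viewed as $J_{\alpha_0+k,k}$ with $F(r_0)=r_0^{-\alpha_0}\log r_0$, satisfies the single-step algebraic identity proved there. Indeed, Proposition \ref{solution generale}\,(i) shows that \emph{any} root function $F$ is propagated by \eqref{recursive} into the form \eqref{Jalphak3} with the prefactor \eqref{calpha}; so it suffices to check that the specific $F$ and the specific constant $c_{1,1-\alpha_0}=\tfrac{1}{(-\alpha_0)!}$ claimed here are compatible with the constants \eqref{calpha} generated along the chain \eqref{chain 1}. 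Matching $c_{\alpha_0+k,k}$ from \eqref{calpha} against the factor $\tfrac{(-1)^n}{n!\,(k-1)!}$ in \eqref{I1k}, with $\alpha_0=-n$ and $k_0=1-\alpha_0=n+1$, reduces the whole inductive step to an elementary bookkeeping of signs and factorials.

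\emph{Main obstacle.} I expect the genuinely delicate point to be the base case rather than the propagation, because that is where the logarithms are produced: one must justify the passage from a product of affine forms to the symmetric divided-difference expression, and in particular verify that the apparent singularities at coincident nodes $r_i=r_j$ are removable (the integral is manifestly continuous by Proposition \ref{properties of Ink}\,(iii), so the singularities on the right of \eqref{I1k} must cancel pairwise). The clean way to handle both the evaluation and this cancellation is the Hermite--Genocchi representation of the divided difference, which makes the symmetry and the regularity transparent; once the base case is in hand, the remaining assertion---that \eqref{I1k} is exactly the instance of Proposition \ref{solution generale} with the stated $F$ and constant---follows by inspection.
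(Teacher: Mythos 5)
Your proposal is correct, and its second half follows the paper exactly: the paper also disposes of general $n$ by feeding the $n=0$ case into Proposition~\ref{solution generale}~i), so the genuine content is the base case $I_{1,k}$, and there you take a different route. The paper computes $I_{1,k}$ by hand: it integrates the simplex variables one at a time (an induction on the number $l$ of variables already integrated), using the antiderivative of $x^{n}\log x$ and killing the spurious polynomial terms at each step with the Vandermonde identities \eqref{eq-appendix 1}--\eqref{eq-appendix 2} of Lemma~\ref{Vandermonde}. Your Hermite--Genocchi argument packages exactly this mechanism: $I_{1,k}$ is the $k$-th divided difference $F_k[r_0,\dots,r_k]$ of a $k$-fold antiderivative $F_k$ of $t\mapsto t^{-1}$, and since $k$-th divided differences annihilate polynomials of degree $\le k-1$, one may take $F_k(t)=\tfrac{t^{k-1}}{(k-1)!}\log t$, which on distinct nodes gives \eqref{eq-I1k} at once; the role played in the paper by Lemma~\ref{Vandermonde} (in particular \eqref{eq-appendix 1}, which is precisely the statement that divided differences kill low-degree monomials) is absorbed into that annihilation property. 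Your route is shorter and makes both the symmetry in the $r_i$ and the removability of the apparent singularities at $r_i=r_j$ transparent; the paper's route is longer but self-contained, relying only on its own appendix.

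Two small repairs to your write-up, neither of which is a conceptual gap. First, Hermite--Genocchi requires the $k$-fold antiderivative of $t^{-1}$, not $\log t$ itself: the $k$-th divided difference of $\log t$ is $\sum_i\prod_{j\neq i}(r_i-r_j)^{-1}\log r_i$, and it is only after replacing $\log t$ by $\tfrac{t^{k-1}}{(k-1)!}\log t$ (modulo polynomials of degree $\le k-1$, which drop out) that the factors $r_i^{k-1}$ and $\tfrac{1}{(k-1)!}$ of \eqref{I1k} appear. Second, your dictionary for the bookkeeping step is off: the ascending chain through $I_{n+1,n+k}$ has $\alpha_0=(n+1)-(n+k)=1-k$ and root level $k_0=1-\alpha_0=k$, with $c_{1,k}=\tfrac{1}{(k-1)!}$ from the base case, not $\alpha_0=-n$ and $k_0=n+1$. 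With the correct identification, \eqref{calpha} gives
\begin{align*}
c_{\alpha_0+(n+k),\,n+k}=\tfrac{(-1)^{n}}{1\cdot 2\cdots n}\,c_{1,k}=\tfrac{(-1)^n}{n!\,(k-1)!}\,,
\end{align*}
which is the prefactor in \eqref{I1k}, and the statement at the end of the Proposition is then this same chain written with $\alpha_0=d/2-r$, so $-\alpha_0=k-1$ and $c_{1,1-\alpha_0}=\tfrac{1}{(-\alpha_0)!}$.
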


\begin{proof}
We use the shortcuts
\begin{align*}
& y_{k-1} \vc r_0+s_1(r_1-r_0)+\dots +s_{k-1}(x_{k-1}-x_{k-2}),\qquad y_0 \vc r_0, \\
& h_n(x) \vc x^n \log x, \quad n\in \N.
\end{align*}
Thus, $ I_{1,k}(r_0,\dots,r_k)=\int_{\Delta_k} ds \, [y_{k-1}+s_k(r_k-r_{k-1})]^{-1}$ and 
\begin{align*}
& y_{k-1}=y_{k-2} +s_{k-1}(r_{k-1}-r_{k-2}),\\
& \int h_n(x) = \tfrac{1}{n+1}\, h_{n+1}(x) -\tfrac{1}{(n+1)^2} \,x^{n+1}.
\end{align*}
We claim that for any integer $l$ with $1\leq l<k$, we have
\begin{align}
\label{hyp-recurrence}
& I_{1,k}(r_0,\dots ,r_k)= c_l \int_{\Delta_{k-l}} ds \,\sum_{i=0}^l \prod^l_{\substack{j=0\\j\neq i}} (r_{k-i}-r_{k-j})^{-1}\,h_{l-1}[y_{k-l-1} +s_{k-l}(x_{k-i}-x_{k-l-1})], \\
& c_l=\tfrac{1}{(l-1)!} \,.
\end{align}
It holds true for $l=1$ since
\begin{align*}
I_{1,k}(r_0,\dots ,r_k) & =\int_{\Delta_{k-1}} ds \int_0^{s_{k-1}}ds_k\, [y_{k-1}+s_k(x_k-x_{k-1})]^{-1}\\
 & = \int_{\Delta_{k-1}} ds\, \tfrac{1}{r_k-r_{k-1}} \,[ h_0\big(y_{k-1}+s_{k-1}(r_k-r_{k-1})\big) -h_0(y_{k-1})]\\
& = \int_{\Delta_{k-1}} ds\, \tfrac{1}{r_k-r_{k-1}}\,h_0\big(y_{k-2}+s_{k-1}(r_{k}-r_{k-2})\big) \\
& \hspace{1.6cm}+ \tfrac{1}{r_{k}-r_{k-1}}\,h_0\big(y_{k-2}+s_{k-1}(r_{k-1}-r_{k-2})\big)
\end{align*}
with $c_1=1$.
Assuming \eqref{hyp-recurrence} valid for some $l$, we want to get it for $l+1$, so we need to compute the integral in $s_{k-l}$:
\begin{align*}
 c_l\,\sum_{i=0}^l \int_{\Delta_{k-l-1}} ds\,\int_0^{s_{k-l-1}} ds_{k-l}\,&\prod^l_{j=0,\,j\neq i} (r_{k-i}-r_{k-j})^{-1} h_{l-1}(y_{k-l-1}+s_{k-l}(r_{k-i}-r_{k-l-i})\\
& =c_l\,\sum_{i=0}^l \int_{\Delta_{k-l-1}} ds\, \prod^l_{j=0,\,j\neq i} (r_{k-i}-r_{k-j})^{-1} (r_{k-i}-r_{k-l-1})^{-1} \\& \hspace{2cm} \big[ l^{-1} \,h_l\big(y_{k-l-1}+s_{k-l-1}(r_k-r_{k-l-1})\big) \\
& \hspace{2cm} -l^{-1}\, h_l(y_{k-l-1})\\
& \hspace{2cm} -l^{-2}(y_{k-l-1}+s_{k-l-1}(r_{k-i}-r_{k-l-1})^l + l^{-2} y_{k-l-1}^l\big].
\end{align*}
The forth line term is 
\begin{align*}
-l^{-2} \sum_{i=0}^l \,\prod^l_{\substack{j=0\\ j\neq i}}(r_{k-i}-r_{k-j})^{-1} (r_{k-i}-r_{k-l-1})^{-1} s_{k-l-1}(r_{k-i}-r_{k-l-1})\,P(r_{k-i})
\end{align*}
where $P$ is a polynomial with degree $\text{deg}(P)<l$. But for $0\leq p < l$, 
$$
\sum_{i=0}^l \,\prod^l_{\substack{j=0\\ j\neq i}} (r_{k-i}-r_{k-j})^{-1} \,x_{k-i}^p=0
$$ 
by \eqref{eq-appendix 1}, so this term is null. The second line term is
\begin{align*}
\int_{\Delta_{k-l-1}} ds\, l^{-1}c_l \sum_{i=0}^l \,\prod^l_{\substack{j=0\\ j\neq i}} (r_{k-i}-r_{k-j})^{-1}\,h_l\big(y_{k-l-2}+s_{k-l-1}(r_{k-i}-r_{k-l-2})\big)
\end{align*}
while the third one is
\begin{align*}
\int_{\Delta_{k-l-1}} \hspace{-0.5cm}ds\, l^{-1}c_l \,h_l(y_{k-l-2}+s_{k-l-1}(r_{k-l-1}-r_{k-l-2}) \big[ \sum_{i=0}^l (r_{k-l-1}-r_{k-i})^{-1} \prod^l_{\substack{j=0\\ j\neq i}} (r_{k-i}-r_{k-j})^{-1} \big]
\end{align*}
where last bracket is equal to $\prod^{l+1}_{j=0,\,j\neq l+1} (r_{k-l-1}-r_{k-i})^{-1}$ by \eqref{eq-appendix 2}. Thus, it corresponds to $i=l+1$ in the sum, and these second and third lines give
\begin{align*}
l^{-1}c_l\int_{\Delta_{k-l-1}}  ds\, \prod^l_{j=0,\,j\neq i} (r_{k-i}-r_{k-j})^{-1} \,h_l\big(y_{k-l-2}+s_{k-l-1}(r_{k-i}-r_{k-l-2})\big)
\end{align*}
with $l^{-1} c_l=(l!)^{-1}$.\\
The computations made in previous recurrence can be adapted to the case $l=k$, starting from $l=k-1$ with same formulae using $y_0=r_0$ and $s_0=1$ which implement $y_0 +s_0(r_1-r_0)=r_1$, so that \eqref{hyp-recurrence} is exactly the formula
\begin{align}
\label{eq-I1k}
 I_{1,k}(r_0,\dots ,r_k)= \tfrac{1}{(k-1)!} \,\sum_{i=0}^k\, \prod^k_{j=0,\,j\neq i} (r_{i}-r_{j})^{-1}\,r_i^{k-1} \log r_i\,.
\end{align}
Remark that \eqref{eq-I1k} proves \eqref{I1k} for $n=0$ and we now apply Proposition \ref{solution generale} i) to show \eqref{I1k}.
\end{proof}

The next propositions compute explicitly Case 3 and then Case 2, and the result is not written as in \eqref{I cases 2 3} where denominators in $r_i - r_j$ appear. This allows to deduce algorithmically (\textsl{i.e.} without any integration) the sequence $I_{d/2,r}, I_{d/2+1,r+1}, \dots, I_{d/2+3r,4r}$.

\begin{proposition}
Case 3: $d$ is odd.\\
If $d/2-r =\ell+1/2$ with $\ell \in \N$, the root and its follower are 
\begin{align*}
& I_{\ell+1/2,0}(r_0) = r_0^{-\ell-1/2}, \\
& I_{\ell+3/2,1}(r_0,r_1) = \tfrac{2}{2\ell+1} (\sqrt{r_0}\sqrt{r_1}\,)^{-2\ell-1}(\sqrt{r_0}+\sqrt{r_1}\,)^{-1} \sum_{0\leq l_1\leq 2\ell} \sqrt{r_0}^{\,l_1}\,\sqrt{r_1}^{\,2\ell-l_1},
\end{align*}
while if $d/2-r=-\ell-1/2$ with $\ell\in \N$, the root and its follower are
\begin{align*}
& I_{-\ell-1/2,0}(r_0) = r_0^{\ell+1/2}, \\
& I_{-\ell+1/2,1}(r_0,r_1) = \tfrac{2}{2\ell+1} (\sqrt{r_0}+\sqrt{r_1}\,)^{-1} \sum_{0\leq l_1\leq 2\ell} \sqrt{r_0}^{\,l_1}\,\sqrt{r_1}^{\,2\ell-l_1}.
\end{align*}
\end{proposition}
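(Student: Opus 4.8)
The plan is to derive both formulae as a single application of the recursive relation \eqref{recursive} at $k=1$, the root itself being immediate from the definition. First, the two roots are read off directly from \eqref{In0}: since $I_{\alpha,0}(r_0)=r_0^{-\alpha}$, the choice $\alpha=\ell+1/2$ gives $I_{\ell+1/2,0}(r_0)=r_0^{-\ell-1/2}$, and the choice $\alpha=-\ell-1/2$ gives $I_{-\ell-1/2,0}(r_0)=r_0^{\ell+1/2}$. In both situations $d/2-r$ is a genuine half-integer, so the value $\alpha=1$ excluded in \eqref{recursive} never occurs along the chain; in particular the relation \eqref{recursive} is licit at the step producing the follower.

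Next I specialize \eqref{recursive} to $k=1$. There $(r_{k-1}-r_k)=(r_0-r_1)$, the part $r_0,\dots,r_{k-2}$ is empty, and the two $(k-1)$-arguments collapse to the single roots $I_{\alpha-1,0}(r_1)$ and $I_{\alpha-1,0}(r_0)$, so that
\begin{align*}
I_{\alpha,1}(r_0,r_1)=\tfrac{1}{\alpha-1}\,(r_0-r_1)^{-1}\big[\,r_1^{-(\alpha-1)}-r_0^{-(\alpha-1)}\,\big].
\end{align*}
For the follower of the first root I set $\alpha=\ell+3/2$, so $\alpha-1=\ell+1/2$ and $\tfrac{1}{\alpha-1}=\tfrac{2}{2\ell+1}$; for the follower of the second root I set $\alpha=-\ell+1/2$, so $\alpha-1=-\ell-1/2$ and $\tfrac{1}{\alpha-1}=-\tfrac{2}{2\ell+1}$.

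The only genuine computation is then an algebraic simplification, carried out with the substitution $a\vc\sqrt{r_0}$, $b\vc\sqrt{r_1}$. I factor $r_0-r_1=(a-b)(a+b)$ and invoke the finite-geometric identity
\begin{align*}
a^{2\ell+1}-b^{2\ell+1}=(a-b)\sum_{j=0}^{2\ell}a^{\,j}\,b^{\,2\ell-j}.
\end{align*}
In the first case $r_1^{-(\ell+1/2)}-r_0^{-(\ell+1/2)}=(ab)^{-(2\ell+1)}\big(a^{2\ell+1}-b^{2\ell+1}\big)$; dividing by $(r_0-r_1)$ cancels the factor $(a-b)$ and, after the prefactor $\tfrac{2}{2\ell+1}$, leaves exactly $(ab)^{-(2\ell+1)}(a+b)^{-1}\sum_{j}a^{j}b^{2\ell-j}$, which is the claimed expression. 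In the second case $r_1^{\ell+1/2}-r_0^{\ell+1/2}=b^{2\ell+1}-a^{2\ell+1}=-(a-b)\sum_{j}a^{j}b^{2\ell-j}$, and the extra minus sign is absorbed by the negative prefactor $-\tfrac{2}{2\ell+1}$; again $(a-b)$ cancels and one recovers $\tfrac{2}{2\ell+1}(a+b)^{-1}\sum_{j}a^{j}b^{2\ell-j}$, now without the factor $(ab)^{-(2\ell+1)}$, which is absent precisely because the root here carries a positive power.

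The argument has no real obstacle: everything reduces to one step of the recursion plus one polynomial factorization. The only points demanding care are bookkeeping — tracking the sign produced by $\alpha-1<0$ in the negative-power case, so that it combines with $b^{2\ell+1}-a^{2\ell+1}$ to yield the \emph{positive} coefficient $\tfrac{2}{2\ell+1}$ — and the half-integer exponents, which are exactly what force the change of variables $a=\sqrt{r_0}$, $b=\sqrt{r_1}$ in order to expose the factorizable odd power $a^{2\ell+1}-b^{2\ell+1}$.
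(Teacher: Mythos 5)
Your proof is correct and follows essentially the same route as the paper: one application of the recursion \eqref{recursive} at $k=1$ with the root \eqref{In0}, then the substitution $a=\sqrt{r_0}$, $b=\sqrt{r_1}$ together with the factorization $a^{2\ell+1}-b^{2\ell+1}=(a-b)\sum_{j=0}^{2\ell}a^{j}b^{2\ell-j}$ to cancel the $(a-b)$ coming from $r_0-r_1$. The only difference is that you work out the negative-exponent case explicitly (including the sign bookkeeping from $\alpha-1<0$), where the paper simply says the proof is similar.
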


\begin{proof}
Using \eqref{recursive} and \eqref{In0}, we get when $\ell\geq 0$
\begin{align*}
I_{\ell+3/2,1}(r_0,r_1) & = \tfrac{1}{\ell+1/2} (r_0-r_1)^{-1}[I_{\ell+1/2,0}(r_1)-I_{\ell+1/2,0}(r_0)]\\
& = \tfrac{2}{2\ell+1} (\sqrt{r_0}-\sqrt{r_1}\,)^{-1}(\sqrt{r_0}+\sqrt{r_1}\,)^{-1} [r_1^{-\ell-1/2} -r_0^{-\ell-1/2}]
\end{align*}
where the term in bracket is 
\begin{align*}
r_1^{-\ell-1/2} -r_0^{-\ell-1/2}&=(\sqrt{r_0}\sqrt{r_1}\,)^{-2\ell-1}[r_0^{2\ell+1} -r_1^{2\ell+1}]\\
& =(\sqrt{r_0}\sqrt{r_1}\,)^{-2\ell-1}(\sqrt{r_0}-\sqrt{r_1}\,)\sum_{0\leq l_1\leq 2\ell} \sqrt{r_0}^{\,l_1}\,\sqrt{r_1}^{\,2\ell-l_1},
\end{align*}
which gives the result. Similar proof for the other equality.
\end{proof}

This proposition exhibits only the two first terms of the recurrence chain in Case~3: similar formulae can be obtained at any level in which no $(r_i-r_j)^{-1}$ factors appear. Unfortunately, they are far more involved.

\begin{proposition}
\label{prop I case 2}
Case 2: $d$ even and $r<d/2$. \\
For $n\in \N$, $n\geq k+1$ and $k\in \N^*$,
\begin{align}
I_{n,k}&(r_0,\dots,r_k) \nonumber\\
&=\tfrac{(r_0\cdots r_k)^{-1}}{(n-1)\cdots (n-k)}\sum_{\substack{0\leq l_k\leq l_{k-1}\leq \cdots\\\cdots \leq l_1 \leq n-(k+1)}} r_0^{l_1-(n-(k+1))}\, r_1^{l_2-l_1}\cdots r_{k-1}^{l_k-l_{k-1}} \, r_k^{-l_k}   \label{Ink pour n>k neg} \\
&=\tfrac{(r_0\cdots r_k)^{-(n-k)}}{(n-1)\cdots (n-k)} \,\sum_{\substack{0\leq l_k\leq l_{k-1}\leq \cdots\\\cdots \leq l_1\leq n-(k+1)}}
r_0^{l_1}\,r_1^{l_2+(n-(k+1))-l_1} \cdots r_{k-1}^{l_k+(n-(k+1))-l_{k-1}}\, r_k^{(n-(k+1))-l_{k}}.  \label{Ink pour n>k pos}
\end{align}
In \eqref{Ink pour n>k pos}, all exponents in the sum are positive while they are negative in \eqref{Ink pour n>k neg}. 
In particular
\begin{align}
\label{I(n+k),k}
I_{n+k,k}(r_0,\dots,r_k)=\tfrac{(r_0\cdots r_k)^{-n}}{(n+k-1)\cdots (n+1) n} \,\sum_{\substack{0\leq l_k\leq l_{k-1}\leq  \cdots\\
\cdots \leq l_1\leq n-1}} r_0^{l_1}\,r_1^{l_2+(n-1)-l_1} \cdots r_{k-1}^{l_k+(n-1)-l_{k-1}}\, r_k^{(n-1)-l_{k}}. 
\end{align}
\end{proposition}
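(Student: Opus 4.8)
The plan is to establish the explicit formula for $I_{n,k}$ in Case~2 by induction on $k$, using the recursive relation~\eqref{recursive} of Proposition~\ref{properties of Ink}~i). Since Case~2 has $r<d/2$, the relevant $\alpha = n = d/2 - r + k$ always exceeds $1$ along the chain~\eqref{chain 2}, so~\eqref{recursive} is available at every step and never hits the forbidden value $\alpha=1$. The base case $k=0$ is just $I_{n,0}(r_0)=r_0^{-n}$ from~\eqref{In0}, which trivially matches~\eqref{Ink pour n>k neg} (an empty ordered sum reduces to $r_0^{-n}$). I would then assume the formula~\eqref{Ink pour n>k neg} holds for $I_{n-1,k-1}$ and feed it into~\eqref{recursive} to obtain $I_{n,k}$.

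First I would write~\eqref{recursive} as
\begin{align*}
I_{n,k}(r_0,\dots,r_k) = \tfrac{1}{(n-1)}(r_{k-1}-r_k)^{-1}\big[I_{n-1,k-1}(r_0,\dots,r_{k-2},r_k)-I_{n-1,k-1}(r_0,\dots,r_{k-1})\big],
\end{align*}
and substitute the inductive form of $I_{n-1,k-1}$, which carries a factor $(r_0\cdots r_{k-2}r_k)^{-1}$ in the first term and $(r_0\cdots r_{k-1})^{-1}$ in the second, together with a prefactor $1/[(n-2)\cdots(n-k)]$. The bracketed difference then factors as $(r_0\cdots r_{k-2})^{-1}$ times a quantity that I expect to telescope against $(r_{k-1}-r_k)^{-1}$ to produce the extra $(r_{k-1}r_k)^{-1}$ and the new summation index $l_k$. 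Concretely, the two inner sums range over $0\le l_{k-1}\le\cdots\le l_1\le (n-1)-k = n-1-k$, and the difference of the two terms, divided by $(r_{k-1}-r_k)$, should be recognized as a geometric-type sum: for fixed $l_1,\dots,l_{k-1}$, the factor $(r_{k-1}-r_k)^{-1}[r_k^{-l_{k-1}\text{-shift}} - r_{k-1}^{-l_{k-1}\text{-shift}}]$ expands as $\sum_{l_k} r_{k-1}^{\cdots}r_k^{\cdots}$ with $l_k$ running from $0$ up to $l_{k-1}$, which is exactly the nested inequality $l_k\le l_{k-1}$ appearing in~\eqref{Ink pour n>k neg}. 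Checking that the exponents of $r_{k-1}$ and $r_k$ rearrange correctly and that the combinatorial range glues onto the inherited chain $0\le l_{k-1}\le\cdots\le l_1$ is the bookkeeping core of the argument.

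The equality of~\eqref{Ink pour n>k neg} and~\eqref{Ink pour n>k pos} I would handle separately as a purely algebraic identity: factoring $(r_0\cdots r_k)^{-(n-k)}$ out of~\eqref{Ink pour n>k pos} and comparing monomial exponents term by term shows the two expressions coincide, the passage between them being the substitution that converts each exponent $l_j$ into its complement relative to $n-(k+1)$. The sign/positivity remarks then follow by inspection: in~\eqref{Ink pour n>k pos} every exponent is a difference of two ordered indices in $[0,n-(k+1)]$ hence nonnegative, while in~\eqref{Ink pour n>k neg} the overall $(r_0\cdots r_k)^{-1}$ makes all exponents negative. Finally,~\eqref{I(n+k),k} is just the specialization $n\mapsto n+k$ of~\eqref{Ink pour n>k pos}, so it requires no separate proof.

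**The main obstacle** I anticipate is the index bookkeeping in the inductive step: verifying that the difference quotient $(r_{k-1}-r_k)^{-1}[\,\cdot\,-\,\cdot\,]$ produces precisely the new ordered index $l_k$ with the correct range $0\le l_k\le l_{k-1}$ and the correct exponent shifts on $r_{k-1}$ and $r_k$, while leaving the inherited inequalities $0\le l_{k-1}\le\cdots\le l_1\le n-1-k$ intact and raising the upper bound to $n-1-k \to n-(k+1)$ consistently. The geometric-sum expansion of the difference of two unequal powers is elementary, but aligning it with the nested-simplex summation and confirming the prefactor $1/[(n-1)\cdots(n-k)]$ emerges after multiplying the inherited $1/[(n-2)\cdots(n-k)]$ by the new $1/(n-1)$ is where the computation must be carried out with care rather than merely sketched.
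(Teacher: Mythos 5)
Your proposal is correct and takes essentially the same route as the paper's own proof: induction on $k$ via the recursive relation \eqref{recursive}, with the difference quotient $(r_{k-1}-r_k)^{-1}[\,\cdot\,-\,\cdot\,]$ expanded as a geometric sum to create the new ordered index $0\le l_k\le l_{k-1}$, and the remaining equalities obtained by trivial algebraic rewriting. The only cosmetic difference is that the paper runs the induction on \eqref{I(n+k),k} (keeping the offset $n$ fixed, base case $k=1$) and then deduces \eqref{Ink pour n>k neg} and \eqref{Ink pour n>k pos} from it, whereas you induct on \eqref{Ink pour n>k neg} starting from the convention \eqref{In0} and specialize at the end; these coincide under the reparametrization $n\mapsto n+k$.
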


\begin{proof}
The first and second equalities follow directly from the third that we prove now. Equality \eqref{I(n+k),k} is true for $k=1$ (the case $k=0$ is just the convention \eqref{In0}) since
\begin{align*}
I_{n,1}(r_0,r_1) &= \int_0^1ds_1\, (r_0+s_1(r_1-r_0)^{-n}= \tfrac{1}{n-1} (r_0-r_1)^{-1}[r_1^{-n+1}-r_0^{-n+1}]\\
&=\tfrac{1}{n+1} (r_0r_1)^{-n+1}\sum_{l_1=0}^{n-2} r_0^{l_1} r_1^{n-2-l_1}.
\end{align*}
Assuming \eqref{I(n+k),k} holds true for $l=0,\dots, k-1$, formula \eqref{recursive} gives
\begin{align*}
I_{n+k,k}&(r_0,\dots,r_{k}) \\
& = \tfrac{1}{n+k-1}(r_{k-1}-r_k)^{-1} \big[ I_{n+k-1,k-1}(r_0,\dots,r_{k-2},r_k)-I_{n+k-1,k-1}(r_0,\dots,r_{k-2},r_{k-1}) \big].
\end{align*}
The term in bracket is
\begin{align*}
\tfrac{(r_0\cdots r_{k-2}r_k)^{-n}}{(n+k-2)\cdots n}  \sum_{\substack{0\leq l_{k-1}\leq l_{k-2}\leq \cdots\\\cdots \leq l_1\leq n-1}} r_0^{l_1} r_1^{l_2+(n-1)-l_1}\cdots r_{k-2}^{l_{k-1}+(n-1)-l_{k-2} }r_k^{n-1-l_{k-1}} \\
- \tfrac{(r_0\cdots r_{k-2} r_{k-1})^{-n}}{(n+k-2)\cdots n}
 \sum_{\substack{0\leq l_{k-1}\leq l_{k-2}\leq \cdots\\\cdots \leq l_1\leq n-1}} r_0^{l_1} r_1^{l_2+(n-1)-l_1}\cdots r_{k-2}^{l_{k-1}+(n-1)-l_{k-2}} r_{k-1}^{n-1-l_{k-1}}.
\end{align*}
Thus
\begin{align*}
I_{n+k,k}(r_0,\dots,r_{k+1})=\tfrac{(r_0\cdots r_{k-2})^{-n}}{(n+k-1)\cdots n} &\sum_{\substack{0\leq l_{k-1}\leq l_{k-2}\leq \cdots\\\cdots \leq l_1\leq n-1}} r_0^{l_1}r_1^{l_2 +(n-1)-l_1}\cdots r_{k-2}^{l_{k-1}+(n-1)-l_{k-2}} \\
&  \hspace{2cm}(r_{k-1}-r_k)^{-1}\big[ r_k^{-n}r_k^{n-1-l_{k-1}}-r_{k-1}^{-n} r_{k-1}^{(n-1)-l_{k-1}} \big].
\end{align*}
Since the last line is equal to
\begin{align*}
(r_{k-1} r_k)^{-1-l_{k-1}} \sum_{0 \leq l_k \leq l_{k-1}} r_{k-1}^{l_k} r_k^{l_{k-1}-l_k} = (r_{k-1} r_k)^{-n} \sum_{0 \leq l_k \leq l_{k-1}} r_{k-1}^{(n-1)+l_k-l_{k-1}} r_k^{(n-1)-l_k}
\end{align*}
we have proved \eqref{I(n+k),k}.
\end{proof}
The interest of \eqref{I(n+k),k} is the fact that in \eqref{Tkp versus Ink} we have the following: for $B_0\otimes\cdots\otimes B_k \in \hH_k$,
\begin{multline}
I_{n+k,k} \big(R_0(u[\sigma]),\dots ,R_k(u[\sigma])\big)[B_0\otimes\cdots\otimes B_k] \\
= \tfrac{1}{(n+k-1)\cdots (n+1)n} \,\sum_{\substack{0\leq l_k\leq l_{k-1}\leq  \cdots\\
\cdots \leq l_1\leq n-1}} B_0\,u[\sigma]^{l_1-n}\otimes B_1\,u[\sigma]^{l_2-l_1-1}\otimes \cdots \\
\cdots \otimes B_{k-1}\,u[\sigma]^{l_k-l_{k-1}-1}\otimes B_k \,u[\sigma]^{-l_{k}-1}. 
\label{I case 2 R(u)}
\end{multline}
Or viewed as an operator in $\B(\H_k,M_N)$ (see diagram \eqref{eq-diagram}):
\begin{multline*}
m \circ \kappa^*\circ I_{n+k,k}\big(R_0(u[\sigma]),\dots ,R_k(u[\sigma])\big)[B_1\otimes\cdots\otimes B_k] \\
=\tfrac{1}{(n+k-1)\cdots (n+1)n} \,\sum_{\substack{0\leq l_k\leq l_{k-1}\leq  \cdots\\
\cdots \leq l_1\leq n-1}} u[\sigma]^{l_1-n} B_1\,u[\sigma]^{l_2-l_1-1} B_2 \cdots  B_{k-1}\,u[\sigma]^{l_k-l_{k-1}-1} B_k \,u[\sigma]^{-l_{k}-1}. 
\end{multline*}
While, if one want to use directly \eqref{I cases 2 3} on $B_0\otimes\cdots\otimes B_k$, we face the difficulty to evaluate  $[R_i(u)-R_j(u)]^{-1} [B_0\otimes\cdots\otimes B_k]$ in $\hH_k$. 

Another defect of \eqref{I cases 2 3} shared by \eqref{I1k} is that it suggests an improper behaviour of integrals $I_{n+1,k+n}$ when two variables $r_i$ are equal. But the continuity proved in Proposition~\ref{properties of Ink} shows that this is just an artifact.

\section{\texorpdfstring{An example for $d=4$ and $u^{\mu\nu}=g^{\mu\nu} u$}{An example for d=4 and umunu=gmunu u}}
\label{Section-example}

Here, we explicitly compute $a_1(x)$ for $d=\dim(M)=4$ assuming $P$ satisfies \eqref{def-P} and \eqref{Hyp}.\\
Given a strictly positive matrix $u(x) \in M_N$ where $x \in (M,g)$, we satisfy Hypothesis \ref{Hyp-defpositiv} with
\begin{align}
\label{Hyp}
u^{\mu\nu}(x) \vc g^{\mu\nu}(x) \,u(x).
\end{align}
This implies that 
$$
H(x,\xi)=\abs{\xi}_{g(x)}^2 \, u(x) \text{ where }\abs{\xi}_{g(x)}^2 \vc g^{\mu\nu}(x)\, \xi_\mu \xi_\nu\,.
$$
Of course the fact that $u[\sigma]=u$ is then independent of $\sigma$, simplifies considerably \eqref{Tkp versus Ink} since the integral in $\xi$ can be performed. Thus we assume \eqref{Hyp} from now on and \eqref{Tkp versus Ink} becomes
\begin{align}
\label{Tkp example}
T_{k,p}=g_d\,G(g)_{\mu_1\dots \mu_{2p}}\, I_{d/2+p,k}\big(R_0(u),R_1(u),\dots,R_k(u)\big) \in \B(\hH_k)
\end{align}
with (see \cite{ZJ})
\begin{align}
& g_d \vc \tfrac{1}{(2\pi)^d} \int_{\R^d} d\xi\, e^{-\abs{\xi}_{g(x)}^2}= \tfrac{\sqrt{\abs{g}}}{2^{d}\,\pi^{d/2}}\,, \nonumber \\
& G(g)_{\mu_1\dots \mu_{2p}} \vc    \tfrac{1}{(2\pi)^d\,g_d}\int d\xi\, \xi_{\mu_1}\cdots \xi_{\mu_{2p}}\, e^{-g^{\alpha\beta} \xi_\alpha\xi_\beta} \nonumber\\
& \hspace{+2.05cm} =\ \tfrac{1}{2^{2p}\,p!} \, \big( \sum_{\rho \in S_{2p}} g_{\mu_{\rho(1)} \mu_{\rho(2)}}\cdots g_{\mu_{\rho(2p-1)} \mu_{\rho(2p)}} \big)  =\ \tfrac{(2p)!}{2^{2p}\,p!} \,g_{(\mu_1\mu_2\dots \mu_{2p})}
\label{G(g)}
\end{align}
where $\abs{g} \vc \det(g_{\mu\nu})$, $S_{2p}$ is the symmetric group of permutations on $2p$ elements and the parenthesis in the index of $g$ is the complete symmetrization over all indices. 

Using the shortcuts
$$
I_{d/2+p,k} \vc I_{d/2+p,k}\big(R_0(u),\dots,R_k(u)\big),
$$
the formula \eqref{int of fk as operator} becomes simply
\begin{align}
\label{terme de base}
\tfrac{1}{(2\pi)^d}\int d\xi \, \xi_{\mu_1}\cdots \xi_{\mu_{2p}} \,f_k(\xi)[\,\BB_k^{\mu_1\dots\mu_{2p}}]=g_d\, (m\circ I_{d/2+p,k})[\bbbone\otimes G(g)_{\mu_1\dots \mu_{2p}} \,\BB_k^{\mu_1\dots \mu_{2p}}].
\end{align}
In particular, it is possible to compute the dimension-free contractions $G(g)_{\mu_1\dots \mu_{2p}} \,\BB_k^{\mu_1\dots \mu_{2p}}$ before evaluating the result in the $I_{d/2+p,k}$'s.
For $a_0(x)$, we get
\begin{align}
\label{a_0(x) cas gmunu}
a_0(x)=\tr \tfrac{1}{(2\pi)^d} \int d\xi\, e^{-u(x)\abs{\xi}_{g(x)}^2}=g_d(x)\, \tr[u(x)^{-d/2}].
\end{align}
We divide the computation of $a_1(x)$ into several steps.

\subsection{Collecting all the arguments}
\label{subsection-collectcontraction}

As a first step, we begin to collect all terms $\BB_k^{\mu_1\dots \mu_{2p}}$ of \eqref{terme de base} due to the different variables appearing in \eqref{f1[P]} and \eqref{f2[K tenseur K]}, including their signs.

\underline{Variable in $f_1$}:  $w$.

\underline{Variables in $f_2$} without the common factor $\xi_{\mu_1}\xi_{\mu_2}$ and summation over $\mu_1, \mu_2$: 
\begin{align*}
\begin{array}{lcl}
-u^{\mu\nu} \otimes \pmu \pnu H &\to &
-g^{\mu\nu}(\pmu \pnu g^{\mu_1\mu_2})\,u\otimes u 
-2 g^{\mu\nu}(\pmu g^{\mu_1\mu_2}) \,u \otimes \pnu u 
- g^{\mu\nu} g^{\mu_1\mu_2}\, u \otimes \pmu \pnu u
\\[5pt]
-v^\mu \otimes \pmu H &\to &
-(\pmu g^{\mu_1\mu_2}) \,v^\mu \otimes u 
-g^{\mu_1\mu_2} \,v^\mu \otimes \pmu u 
\\[5pt]
-\bar v\otimes \bar v & \to &
-v^{\mu_1} \otimes v^{\mu_2}
\\[5pt]
-2 \bar u^\mu \otimes \pmu \bar v &\to &
-2 g^{\mu\mu_1} \, u\otimes \pmu v^{\mu_2}.
\end{array}
\end{align*}

\underline{Variables in $f_3$} without the commun factor $\Pi_{i=1}^4 \xi_{\mu_i}$ and summation over the $\mu_i$:
\begin{align*}\def\arraystretch{1.1}
\begin{array}{lcl}
2 u^{\mu\nu} \otimes \pmu H \otimes \pnu H &\to  &
+2 g^{\mu\nu}(\pmu g^{\mu_1\mu_2})(\pnu g^{\mu_3\mu_4}) \, u\otimes u\otimes u\\
&& 
+2g^{\mu\nu} (\pmu g^{\mu_1\mu_2})g^{\mu_3\mu_4}\, u\otimes u\otimes \pnu u\\
&& 
+2g^{\mu\nu}  g^{\mu_1\mu_2}(\pnu g^{\mu_3\mu_4})\, u\otimes \pmu u\otimes  u\\
&& 
+2g^{\mu\nu}  g^{\mu_1\mu_2} g^{\mu_3\mu_4}\, u\otimes \pmu u\otimes \pnu u 
\\[5pt]
2 \bar v \otimes \bar u^\mu \otimes \pmu H &\to  &
+2 g^{\mu\mu_2}(\pmu g^{\mu_3\mu_4})\,v^{\mu_1} \otimes u \otimes u 
+2 g^{\mu\mu_2}g^{\mu_3\mu_4}\,v^{\mu_1} \otimes u  \otimes \pmu u 
\\[5pt]
2 \bar u^\mu \otimes \pmu H \otimes \bar v &\to &
+2 g^{\mu\mu_1}(\pmu g^{\mu_2\mu_3})\, u\otimes u \otimes v^{\mu_4} 
+ 2 g^{\mu\mu_1}g^{\mu_2\mu_3}\, u\otimes \pmu u \otimes v^{\mu_4} 
\\[5pt]
2 \bar u^\mu \otimes \bar v \otimes \pmu H &\to &
+2 g^{\mu\mu_1}(\pmu g^{\mu_3\mu_4})\, u\otimes v^{\mu_2}\otimes u  
+ 2 g^{\mu\mu_1}g^{\mu_3\mu_4}\, u \otimes v^{\mu_2} \otimes \pmu u
\\[5pt]
4\bar u^\mu \otimes \pmu \bar u^\nu \otimes \pnu H &\to & 
+4g^{\mu\mu_1}(\pmu g^{\nu\mu_2}) (\pnu g^{\mu_3\mu_4}) \,u \otimes u \otimes u \\
&& 
+4g^{\mu\mu_1}(\pmu g^{\nu\mu_2}) g^{\mu_3\mu_4} \,u \otimes u \otimes \pnu u \\
&& 
+4g^{\mu\mu_1} g^{\nu\mu_2}(\pnu g^{\mu_3\mu_4}) \,u \otimes \pmu u \otimes  u \\ 
&& 
+4g^{\mu\mu_1} g^{\nu\mu_2} g^{\mu_3\mu_4} \,u \otimes \pmu u \otimes  \pnu u
\\[5pt]
4 \bar u^\mu \otimes \bar u^\nu \otimes \pmu \pnu H &\to  &
+ 4 g^{\mu\mu_1} g^{\nu\mu_2} (\pmu\pnu g^{\mu_3\mu_4}) \,u \otimes u \otimes   u \\
&& 
+ 4 g^{\mu\mu_1} g^{\nu\mu_2} (\pmu g^{\mu_3\mu_4}) \,u \otimes u \otimes  \pnu u \\
&& 
+ 4 g^{\mu\mu_1} g^{\nu\mu_2} (\pnu g^{\mu_3\mu_4}) \,u \otimes  u \otimes \pmu u \\
&& 
+ 4 g^{\mu\mu_1} g^{\nu\mu_2} g^{\mu_3\mu_4} \,u \otimes  u \otimes \pmu \pnu u 
\end{array}
\end{align*}

\underline{Variables in $f_4$} without the commun factor $\Pi_{i=1}^6 \xi_{\mu_i}$ and summation over the $\mu_i$:
\begin{align*}\def\arraystretch{1.1}
\begin{array}{lcl}
-4\bar u^\mu \otimes \pmu H \otimes \bar u ^\nu \otimes \pnu H &\to & 
-4g^{\mu\mu_1}(\pmu g^{\mu_2\mu_3}) g^{\nu\mu_4}(\pnu g^{\mu_5\mu_6}) \, u \otimes u \otimes u \otimes u \\
&& 
-4g^{\mu\mu_1}(\pmu g^{\mu_2\mu_3}) g^{\nu\mu_4} g^{\mu_5\mu_6} \, u \otimes u \otimes u \otimes \pnu u \\
&& 
-4g^{\mu\mu_1} g^{\mu_2\mu_3} g^{\nu\mu_4}(\pnu g^{\mu_5\mu_6}) \, u \otimes \pmu u \otimes u \otimes u \\
&& 
-4g^{\mu\mu_1} g^{\mu_2\mu_3} g^{\nu\mu_4}g^{\mu_5\mu_6} \, u \otimes \pmu u \otimes u \otimes \pnu u
\\[5pt]
-4 \bar u ^\mu \otimes \bar u^\nu \otimes \pmu H\otimes \pnu H &\to & 
-4g^{\mu\mu_1} g^{\nu\mu_2} (\pmu g^{\mu_3\mu_4})(\pnu g^{\mu_5\mu_6}) \,  u \otimes u \otimes u \otimes  u \\
&& 
-4g^{\mu\mu_1} g^{\nu\mu_2} (\pmu g^{\mu_3\mu_4})g^{\mu_5\mu_6} \, u \otimes u \otimes u \otimes  \pnu u \\
&& 
-4g^{\mu\mu_1} g^{\nu\mu_2} g^{\mu_3\mu_4} (\pnu g^{\mu_5\mu_6}) \,  u \otimes u \otimes \pmu u \otimes   u \\
&& 
-4g^{\mu\mu_1} g^{\nu\mu_2} g^{\mu_3\mu_4}  g^{\mu_5\mu_6} \, u \otimes u \otimes \pmu u \otimes  \pnu u 
\\[5pt]
-4 \bar u ^\mu \otimes \bar u^\nu \otimes \pnu H\otimes \pmu H &\to & 
-4g^{\mu\mu_1} g^{\nu\mu_2} (\pnu g^{\mu_3\mu_4})(\pmu g^{\mu_5\mu_6}) \,  u \otimes u \otimes u \otimes  u \\
&& 
-4g^{\mu\mu_1} g^{\nu\mu_2} (\pnu g^{\mu_3\mu_4}) g^{\mu_5\mu_6} \,  u \otimes u \otimes u \otimes \pmu u \\
&& 
-4g^{\mu\mu_1} g^{\nu\mu_2}  g^{\mu_3\mu_4}(\pmu g^{\mu_5\mu_6}) \,  u \otimes u \otimes \pnu u \otimes  u \\
&& 
-4g^{\mu\mu_1} g^{\nu\mu_2}  g^{\mu_3\mu_4} g^{\mu_5\mu_6} \,  u \otimes u \otimes \pnu u \otimes  \pmu u .
\end{array}
\end{align*}

A second and tedious step is now to do in \eqref{terme de base} the metric contractions $G(g)_{\mu_1\dots \mu_{2p}} \,\BB_k^{\mu_1\dots \mu_{2p}}$ for previous terms where the $G(g)_{\mu_1\dots \mu_{2p}}$ are given by:
\begin{align}
& G(g)_{\mu_1\mu_2}=\tfrac{1}{2}\, g_{\mu_1\mu_2} \,, \label{G(g)1}\\
& G(g)_{\mu_1\mu_2\mu_3\mu_4} = \tfrac{1}{4} \, (g_{\mu_1\mu_2}g_{\mu_3\mu_4}+ g_{\mu_1\mu_3}g_{\mu_2\mu_4}+g_{\mu_1\mu_4}g_{\mu_2\mu_3})\,,\label{G(g)2}\\
& G(g)_{\mu_1\mu_2\mu_3\mu_4\mu_5\mu_6} = \tfrac{1}{8}\,\,\big[+g_{\mu_1\mu_2}g_{\mu_3\mu_4}g_{\mu_5\mu_6}+g_{\mu_1\mu_2}g_{\mu_3\mu_5}g_{\mu_4\mu_6} +g_{\mu_1\mu_2}g_{\mu_3\mu_6}g_{\mu_4\mu_5} \nonumber\\
 & \hspace{3.95cm}+g_{\mu_1\mu_3}g_{\mu_2\mu_4}g_{\mu_5\mu_6}+g_{\mu_1\mu_3}g_{\mu_2\mu_5}g_{\mu_4\mu_6}+g_{\mu_1\mu_3}g_{\mu_2\mu_6}g_{\mu_4\mu_5}\nonumber\\
 & \hspace{3.95cm} +g_{\mu_1\mu_4}g_{\mu_2\mu_3}g_{\mu_5\mu_6} +g_{\mu_1\mu_4}g_{\mu_2\mu_5}g_{\mu_3\mu_6} + g_{\mu_1\mu_4}g_{\mu_2\mu_6}g_{\mu_3\mu_5} \nonumber\\
 & \hspace{4cm}+ g_{\mu_1\mu_5}g_{\mu_2\mu_3}g_{\mu_4\mu_6} +g_{\mu_1\mu_5}g_{\mu_2\mu_4}g_{\mu_3\mu_6} +g_{\mu_1\mu_5}g_{\mu_2\mu_6}g_{\mu_3\mu_4} \nonumber \\
 & \hspace{3.95cm} +g_{\mu_1\mu_6}g_{\mu_2\mu_3}g_{\mu_4\mu_5} + g_{\mu_1\mu_6}g_{\mu_2\mu_4}g_{\mu_3\mu_5}+ g_{\mu_1\mu_6}g_{\mu_2\mu_5}g_{\mu_3\mu_4} \big]\,. \label{G(g)3}
 \end{align}
 \\
Keeping the same order already obtained in the first step, we get after the contactions:

\underline{Contribution of $f_1$ variable:} $w$ (no contraction since $p=0$).

\underline{Contribution of $f_2$ variables:} 
\begin{multline*}
-\tfrac 12 g^{\mu\nu} g_{\rho\sigma}(\pmu\pnu g^{\rho\sigma})\, u\otimes u 
-g^{\mu\nu} g_{\rho\sigma}(\pnu g^{\rho\sigma}) \,u\otimes \pmu u 
-\tfrac{d}{2} g^{\mu\nu}  \, u \otimes \pmu\pnu u 
\\
-\tfrac 12 g_{\rho\sigma} (\pmu g^{\rho\sigma})\, v^\mu \otimes u
- \tfrac{d}{2} \,v^\mu \otimes \pmu u 
 -\tfrac{1}{2} g_{\mu\nu} \,v^\mu \otimes v^\nu - u \otimes \pmu v^\mu.
\end{multline*}

\underline{Contribution of $f_3$ variables:} 
\begin{align*}
& 
\big[
\tfrac{1}{2} g^{\mu\nu} g_{\rho\sigma} g_{\alpha\beta} (\pmu g^{\rho\sigma})(\pnu g^{\alpha\beta}) 
+ g^{\mu\nu} g_{\rho\sigma} g_{\alpha\beta} (\pmu g^{\rho\alpha})(\pnu g^{\sigma\beta})
\big]
\,u\otimes u\otimes u  
\\
&
+\tfrac{d+2}{2} g^{\mu\nu} g_{\rho\sigma} (\pnu g^{\rho\sigma})\,u\otimes u\otimes \pmu u 
+\tfrac{d+2}{2} g^{\mu\nu} g_{\rho\sigma} (\pnu g^{\rho\sigma})\,u\otimes \pmu u\otimes  u  
+\tfrac{d(d+2)}{2} g^{\mu\nu} \,u\otimes \pmu u\otimes \pnu u 
\\
&
+ \tfrac{1}{2}  g_{\rho\sigma} (\pmu g^{\rho\sigma})\,v^\mu\otimes  u\otimes  u  
+ g_{\mu\nu}(\prho g^{\rho\nu}) \, v^\mu \otimes u \otimes u 
+ \tfrac{d+2}{2} \,v^\mu \otimes u \otimes \pmu u
\\
&
+ \tfrac{1}{2} g_{\rho\sigma} (\pmu g^{\rho\sigma})\,u\otimes u \otimes v^\mu 
+ g_{\mu\nu}(\prho g^{\rho\nu})\,u\otimes u \otimes v^\mu 
+ \tfrac{d+2}{2} \,u\otimes \pmu u \otimes v^\mu
\\
& 
+\tfrac{1}{2} g_{\rho\sigma} (\pmu g^{\rho\sigma})\,u\otimes  v^\mu\otimes  u  
+ g_{\mu\nu}(\prho g^{\rho\nu}) \, u \otimes v^\mu \otimes u 
+ \tfrac{d+2}{2} \,u \otimes v^\mu \otimes \pmu u 
\\
& 
+\big[
g_{\rho\sigma} (\pmu g^{\mu\nu})(\pnu g^{\rho\sigma})
+2 g_{\rho\sigma}(\pmu g^{\nu\rho})(\pnu g^{\mu\sigma})
\big] u\otimes u\otimes u 
+ (d+2) (\pnu g^{\mu\nu}) \,u\otimes u \otimes \pmu u 
\\
& 
+[g^{\mu\nu}g_{\rho\sigma} (\pnu g^{\rho\sigma}) +2 (\pnu g^{\mu\nu})]\, u\otimes \pmu u \otimes u 
+ (d+2) g^{\mu\nu} \, u \otimes \pmu u \otimes \pnu u 
\\
& 
+ [g^{\mu\nu}g_{\rho\sigma} (\pmu\pnu g^{\rho\sigma}) +2 (\pmu\pnu g^{\mu\nu})]\, u\otimes u \otimes u  
+ [g^{\mu\nu}g_{\rho\sigma} (\pnu g^{\rho\sigma}) +2 (\pnu g^{\mu\nu})]\, u\otimes u \otimes \pmu u 
\\
&
+ [g^{\mu\nu}g_{\rho\sigma} (\pnu g^{\rho\sigma}) +2 (\pnu g^{\mu\nu})]\, u\otimes u \otimes  \pmu u  
+ (d+2) g^{\mu\nu}\, u\otimes u \otimes \pmu\pnu u.
\end{align*}
which, once collected, gives
\begin{align*}
&
\big[
g^{\mu\nu}g_{\rho\sigma} (\pmu\pnu g^{\rho\sigma}) 
+2 (\pmu\pnu g^{\mu\nu})
+g_{\rho\sigma} (\pmu g^{\mu\nu})(\pnu g^{\rho\sigma})
+2 g_{\rho\sigma}(\pmu g^{\nu\rho})(\pnu g^{\mu\sigma})
\\
& \quad\quad\quad\quad
\tfrac{1}{2} g^{\mu\nu} g_{\rho\sigma} g_{\alpha\beta} (\pmu g^{\rho\sigma})(\pnu g^{\alpha\beta}) 
+ g^{\mu\nu} g_{\rho\sigma} g_{\alpha\beta} (\pmu g^{\rho\alpha})(\pnu g^{\sigma\beta})
\big]
\,u\otimes u\otimes u 
\\
&
+(d+6) \big[
\tfrac 12 g^{\mu\nu}g_{\rho\sigma} (\pnu g^{\rho\sigma}) +  (\pnu g^{\mu\nu})
\big]
\,u\otimes u\otimes \pmu u 
\\
&
+ \big[
\tfrac{d+4}{2} g^{\mu\nu}g_{\rho\sigma} (\pnu g^{\rho\sigma}) + 2 (\pnu g^{\mu\nu})
\big]
\,u\otimes \pmu u\otimes u 
\\
&
+ \tfrac{(d+2)^2}{2} g^{\mu\nu} \,u\otimes \pmu u\otimes \pnu u 
+ (d+2) g^{\mu\nu}\, u\otimes u \otimes \pmu\pnu u
\\
&
+ \big[
\tfrac{1}{2} g_{\rho\sigma} (\pmu g^{\rho\sigma}) + g_{\mu\nu}(\prho g^{\rho\nu})
\big]
(v^\mu \otimes u \otimes u + u \otimes v^\mu \otimes u + u \otimes u \otimes v^\mu)
\\
&
+ \tfrac{d+2}{2} (v^\mu \otimes u \otimes \pmu u + u \otimes \pmu u \otimes v^\mu  + u \otimes v^\mu \otimes \pmu u)
\end{align*}

\underline{Contribution of $f_4$ variables:} 
We use the following symmetry: in previous three terms of $f_4$, one goes from the first to the second right terms by the change $(\mu_2,\mu_3,\mu_4) \to (\mu_3,\mu_4,\mu_2)$ and from the second to the third terms via $(\mu,\nu) \to (\nu,\mu)$ and $(\mu_1,\mu_2) \to (\mu_2,\mu_1)$. So after the contraction of the first term and using that symmetry (which explains the factors 3 and 2), we get
\begin{align*}
&\big[ 
-\tfrac 12 g^{\mu\nu} g_{\rho\sigma} g_{\alpha\beta} (\pmu g^{\rho\sigma}) (\pnu g^{\alpha\beta}) 
- g^{\mu\nu} g_{\rho\sigma} g_{\alpha\beta} (\pmu g^{\rho\alpha}) (\pnu g^{\sigma\beta})
-2 g_{\rho\sigma} (\pmu g^{\mu\nu}) (\pnu g^{\rho\sigma})
\\
& \hspace{1cm}
-2 g_{\rho\sigma} (\pmu g^{\mu\rho}) (\pnu g^{\nu\sigma})
-2 g_{\rho\sigma} (\pmu g^{\nu\rho}) (\pnu g^{\mu\sigma})
\big] \, u \otimes u \otimes u \otimes u 
\\
& 
- (d+4) \big[(\pmu g^{\mu\nu}) +\tfrac 12 g^{\mu\nu} g_{\rho\sigma}(\pmu g^{\rho\sigma}) \big] 
\big(
3 \, u\otimes u \otimes u\otimes \pnu u\\
& \hspace{6.6cm}
+ 2 \,u\otimes u \otimes \pnu u\otimes u
+ u \otimes \pnu u \otimes u \otimes u
\big)
\\
& 
-\tfrac 12 (d+4)(d+2) g^{\mu\nu} \, 
\big(2\,u \otimes  u \otimes \pnu u \otimes \pnu u + u \otimes \pmu u \otimes u \otimes \pnu u \big).
\end{align*}

It worth to mention that all results of this section are valid in arbitrary dimension $d$ of the manifold; this explains why we have not yet replaced $d$ by 4.

\subsection{\texorpdfstring{Application of operators $I_{d/2+p,k}$ for $d=4$}{Application of operators I for d=4}}

We can now compute in \eqref{terme de base} the application of $I_{d/2+p,k}$ on each previous $ G(g)_{\mu_1\dots \mu_{2p}} \,\BB_k^{\mu_1\dots \mu_{2p}}$. Lemma \ref{number to calculate} and \eqref{Tkp example} tell us that we need 
\begin{align*}
& I_{2,1}=[R_0(u)\,R_1(u)]^{-1}, \\
& I_{3,2}  = \tfrac 12 \,[R_0(u)\,R_1(u)\,R_2(u)]^{-1}, \\
& I_{4,3} = \tfrac 16 \,[R_0(u)\,R_1(u)\,R_2(u)\,R_3(u))]^{-1},\\
& I_{5,4} =  \tfrac{1}{24} \,[R_0(u)\,R_1(u)\,R_2(u)\,R_3(u)\,R_4(u)]^{-1}
\end{align*}
written here for $d=4$.

We give below the list of all $(\tr \circ\, m \circ \kappa^* \circ I_{d/2+p,k}) [\BB_k]$ for each kind of $\BB_k \in \hH_k$ found previously, respecting again the order. The cyclicity of the trace is used.

\underline{Contribution of $f_1$ variable:} 
\begin{align*}
w \to \tr[u^{-2}w].
\end{align*}

\underline{Contribution of $f_2$ variables:} 
\begin{align*}
 u \otimes u &\to \tfrac 12 \tr[u^{-1}] \\
 u \otimes \pnu u &\to \tfrac 12  \tr[u^{-2}(\pnu u)] \\
 u \otimes \pmu\pnu u &\to \tfrac 12  \tr[u^{-2}(\pmu\pnu u)] \\
 v^\mu \otimes u &\to \tfrac 12 \tr[u^{-2}v^\mu] \\
 v^\mu \otimes \pmu u &\to \tfrac 12 \tr[u^{-2}v^\mu u^{-1} (\pmu u)] \\
v^\mu \otimes v^\nu &\to \tfrac 12 \tr[u^{-2}v^\mu u^{-1} v^\nu] \\
u \otimes \pmu v^\mu &\to \tfrac 12 \tr[u^{-2}(\pmu v^\mu)].
\end{align*}

\underline{Contribution of $f_3$ variables:} 
\begin{align*}
u \otimes u \otimes u &\to \tfrac 16 \tr[u^{-1}] \\
u \otimes u \otimes \pnu u &\to \tfrac 16 \tr[u^{-2}(\pnu u)] \\
u \otimes \pnu u \otimes  u &\to \tfrac 16 \tr[u^{-2}(\pnu u)] \\
u \otimes \pmu u \otimes  \pnu u &\to \tfrac 16 \tr[u^{-2}(\pmu u) u^{-1}(\pnu u)] \\
v^\mu \otimes u \otimes u &\to \tfrac 16 \tr[u^{-2}v^\mu] \\
v^\mu \otimes u \otimes  \pmu u& \to \tfrac 16 \tr[u^{-2}v^\mu u^{-1}(\pmu u)] \\
u \otimes u \otimes v^\mu &\to \tfrac 16 \tr[u^{-2}v^\mu] \\
u \otimes \pmu u \otimes v^\mu &\to \tfrac 16 \tr[u^{-2}(\pmu u) u^{-1}v^\mu] \\
u \otimes v^\mu \otimes u &\to \tfrac 16 \tr[u^{-2}v^\mu] \\
u \otimes v^\mu \otimes \pmu u &\to \tfrac 16 \tr[u^{-2}v^\mu u^{-1} (\pmu u)] \\
u \otimes u \otimes \pmu\pnu u &\to \tfrac 16 \tr[u^{-2}(\pmu\pnu u)].
\end{align*}

\underline{Contribution of $f_4$ variables:} 
\begin{align*}
 u \otimes u \otimes u \otimes u  & \to \tfrac{1}{24} \tr[u^{-1}] \\
 u \otimes u \otimes u \otimes \pnu u  & \to \tfrac{1}{24} \tr[u^{-2}(\pnu u)] \\
 u \otimes \pnu u \otimes u \otimes  u  & \to \tfrac{1}{24} \tr[u^{-2}(\pnu u)] \\
 u \otimes u \otimes \pnu u \otimes  u  & \to \tfrac{1}{24} \tr[u^{-2}(\pnu u)] \\
 u \otimes \pmu u \otimes u \otimes \pnu u  & \to \tfrac{1}{24} \tr[u^{-2}(\pmu u)u^{-1} (\pnu u)] \\
 u \otimes u \otimes \pmu u \otimes \pnu u & \to \tfrac{1}{24} \tr[u^{-2}(\pmu u)u^{-1} (\pnu u)].
\end{align*}

\subsection{Main results}

The recollection of all contributions \eqref{terme de base} for $a_1(x)$ is now ready for the first interesting result:

\begin{theorem}
\label{calcul de a1}
Assume that $P = -(u \,g^{\mu\nu}\pmu\pnu +v^\nu\pnu +w)$ is a selfadjoint elliptic operator acting on $L^2(M,V)$ for a 4-dimensional boundaryless Riemannian manifold $(M,g)$ and a vector bundle $V$ over $M$ where $u, v^\mu, w$ are local maps on $M$ with values in $M_N$, with $u$ positive and invertible. Then, its local $a_1(x)$ heat-coefficient in \eqref{heat-trace-asympt} for $x\in M$ is
\begin{equation}
\label{valeur de a1}
a_1 =  g_4 (\alpha \tr [u^{-1}] +  \tr[u^{-2} \ell])
\end{equation}
where $g_4=\tfrac{\abs{g}^{1/2}}{16 \pi^2}$ and
\begin{align*}
\alpha &\vc 
\tfrac 13 (\pmu\pnu  g^{\mu\nu}) 
-\tfrac 12 g^{\mu\nu} g_{\rho \sigma}(\pmu\pnu g^{\rho\sigma}) 
+ \tfrac{1}{48} g^{\mu\nu} g_{\rho \sigma} g_{\alpha\beta}(\pmu g^{\rho\sigma})(\pnu g^{\alpha\beta}) 
\\
& \qquad\qquad
+ \tfrac{1}{24} g^{\mu\nu} g_{\rho \sigma} g_{\alpha\beta}(\pmu g^{\rho\alpha})(\pnu g^{\sigma\beta}) 
-\tfrac{1}{12} g_{\rho\sigma}(\pmu g^{\mu\nu})(\pnu g^{\rho\sigma}) 
\\
& \qquad\qquad 
+ \tfrac{1}{12} g_{\rho\sigma}(\pmu g^{\nu\rho})(\pnu g^{\mu\sigma}) 
- \tfrac 14 g_{\rho\sigma}(\pmu g^{\mu\rho})(\pnu g^{\nu\sigma}), 
\\[5pt]
\ell &\vc 
w 
+ \tfrac 12 g_{\mu\nu}(\prho g^{\rho\nu}) \,v^\mu  
-\tfrac 12  (\pmu v^\mu)
- \tfrac 14 g_{\mu\nu} \,v^\mu u^{-1}v^\nu 
+ \tfrac 12 \,(\pmu u)u^{-1} v^\mu\,.
\end{align*}
\end{theorem}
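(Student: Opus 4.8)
The statement is the terminal point of the apparatus built in Sections~\ref{Section-algebraic}--\ref{Section-example}, so the plan is one of disciplined assembly rather than of a new idea. I would start from the reduced expression \eqref{eq-a1(x)} for $a_1(x)$ together with the two expansions \eqref{f1[P]} and \eqref{f2[K tenseur K]}, which already present the integrand as a sum of the $14$ terms $f_k[B_1\otimes\cdots\otimes B_k]$ whose arguments are $u^{\mu\nu}=g^{\mu\nu}u$, $v^\mu$, $w$ and the derivatives of $H=\abs{\xi}_{g}^2\,u$. Substituting \eqref{Hyp} and differentiating $H$ turns each such term into a finite sum of monomials $\BB_k^{\mu_1\dots\mu_{2p}}\,\xi_{\mu_1}\cdots\xi_{\mu_{2p}}$ with $\BB_k\in\H_k$; this is exactly the list produced in Subsection~\ref{subsection-collectcontraction}, sorted by $k=1,2,3,4$ and carrying its sign and numerical prefactor.

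The next step is to feed each $\BB_k$ through the operator \eqref{terme de base}: contract its free indices with $G(g)_{\mu_1\dots\mu_{2p}}$ via \eqref{G(g)1}--\eqref{G(g)3}, then apply $m\circ\kappa^*\circ I_{d/2+p,k}$ and the trace. Because \eqref{Hyp} makes $u[\sigma]=u$ independent of $\sigma$, formula \eqref{Tkp example} shows that the only integral operators that survive are $I_{2,1},I_{3,2},I_{4,3},I_{5,4}$, which at $d=4$ collapse to the scalar multiples $1,\tfrac12,\tfrac16,\tfrac1{24}$ of $[R_0(u)\cdots R_k(u)]^{-1}$ (the list is exactly the one dictated by Lemma~\ref{number to calculate}). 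Evaluating $m\circ\kappa^*$ then amounts to distributing one factor $u^{-1}$ into each tensor slot and multiplying out, after which cyclicity of the trace yields the closed list of scalars $\tr[u^{-1}]$, $\tr[u^{-2}w]$, $\tr[u^{-2}(\pmu v^\mu)]$, $\tr[u^{-2}v^\mu u^{-1}v^\nu]$, $\tr[u^{-2}(\pmu u)u^{-1}v^\mu]$, etc. I would keep the dimension $d$ symbolic for as long as possible and specialise to $d=4$ only at the very end.

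Summing these contributions is where the real work lies, and I expect the bookkeeping to be the main obstacle: one must add up on the order of fifty signed terms, tracking the combinatorial weights coming from the symmetrisations in $G(g)$ and from the factors $1/k!$ in the $I_{\alpha,k}$, while \emph{respecting the non-commutative ordering} inside each trace, since only cyclic permutations are permitted (so $\tr[u^{-2}v^\mu u^{-1}(\pmu u)]$ and $\tr[u^{-2}(\pmu u)u^{-1}v^\mu]$ must be kept distinct until their coefficients are fixed). The decisive point to verify is a family of cancellations occurring precisely at $d=4$: the scalars $\tr[u^{-2}(\pmu\pnu u)]$ and $\tr[u^{-2}(\pmu u)u^{-1}(\pnu u)]$, which a priori enter $a_1$, drop out because the competing contributions of $f_2$ and $f_3$ (respectively $f_3$ and $f_4$) combine with an overall factor proportional to $d-4$; likewise the pure $\tr[u^{-2}(\pmu u)]$ terms, carrying various first-derivative-of-metric coefficients, must cancel among themselves. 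It is these cancellations that make the final answer depend on $u$ only through $\tr[u^{-1}]$ and through $\tr[u^{-2}\cdot(\text{terms of first order in } v,w)]$.

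Once the cancellations are in place, the surviving pieces organise themselves into the two announced families: the terms proportional to $\tr[u^{-1}]$, whose coefficients are built solely from $g$ and its first two derivatives, assemble into $\alpha$, while the terms carrying $w$, $v^\mu$ and their products assemble into $\ell$. Reading off the seven geometric monomials defining $\alpha$ and the five matrix monomials defining $\ell$, and inserting $g_4=\abs{g}^{1/2}/16\pi^2$ from \eqref{G(g)} at $d=4$, then produces \eqref{valeur de a1}. I would finish with a consistency check in the scalar-symbol limit $u=\bbbone$, where $\ell$ must collapse to the familiar endomorphism entering the classical heat coefficient.
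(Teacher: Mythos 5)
Your proposal is correct and is essentially identical to the paper's own proof: Theorem~\ref{calcul de a1} is established there exactly by the machinery you invoke — expanding \eqref{f1[P]} and \eqref{f2[K tenseur K]}, listing the arguments $\BB_k^{\mu_1\dots\mu_{2p}}$ as in Subsection~\ref{subsection-collectcontraction}, contracting with $G(g)_{\mu_1\dots\mu_{2p}}$, applying $I_{2,1},I_{3,2},I_{4,3},I_{5,4}$ (which at $d=4$ reduce to $1,\tfrac12,\tfrac16,\tfrac1{24}$ times $[R_0(u)\cdots R_k(u)]^{-1}$), and recollecting under the trace. Your claimed cancellations are also accurate: for instance the $\tr[u^{-2}\pmu\pnu u]$ contributions from $f_2$ and $f_3$ combine to $\tfrac{4-d}{12}\,g^{\mu\nu}\tr[u^{-d/2}\pmu\pnu u]$ and the $(\pmu u)u^{-1}(\pnu u)$ and metric-derivative-times-$(\pmu u)$ terms likewise sum to zero at $d=4$, which is precisely why the final formula involves $u$ only through $\alpha\tr[u^{-1}]$ and $\tr[u^{-2}\ell]$.
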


Since the operator $P$ is not written in terms of objects which have simple (homogeneous) transformations by change of coordinates and gauge transformation, this result does not make apparent any explicit Riemannian or gauge invariant expressions. For example $\ell$ is not a scalar under a change of coordinates since $v^\mu$ is not a vector. This is why we have not used normal coordinates until now. Nevertheless, from Lemma~\ref{lem-u vmu w cc gt}, one can deduce:

\begin{lemma}
Under a gauge transformation, $\ell$ in \eqref{valeur de a1} transforms as $\ell \to \gamma \ell \gamma^{-1}$. This checks that $a_1(x)$ is gauge invariant.
\end{lemma}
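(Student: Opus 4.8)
The plan is to substitute the gauge transformation rules for $u$, $v^\mu$ and $w$ provided by Lemma~\ref{lem-u vmu w cc gt} directly into the expression for $\ell$ in Theorem~\ref{calcul de a1}, and to verify that every anomalous term, i.e. every term carrying a derivative of the gauge matrix $\gamma$, cancels, so that only the conjugate $\gamma\ell\gamma^{-1}$ survives. Recall that a gauge transformation is implemented on $P$ by conjugation $P\togt\gamma P\gamma^{-1}$ with $\gamma$ an invertible $M_N$-valued function, and that the metric $g$, hence $g^{\mu\nu}$, $g_{\mu\nu}$ and all their derivatives, is inert under g.t. Setting $\omega_\mu\vc(\pmu\gamma)\gamma^{-1}$, the rules read $u\togt\gamma u\gamma^{-1}$ (and thus $u^{-1}\togt\gamma u^{-1}\gamma^{-1}$), while $v^\mu$ and $w$ pick up inhomogeneous pieces that are respectively linear and quadratic in $\omega_\mu$ and also contain $\pmu\pnu\gamma$; these are exactly the connection-type terms generated by conjugating a second order operator.

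First I would separate the homogeneous contributions. Since the metric factors are scalars commuting with $\gamma$ and $u^{-1}\togt\gamma u^{-1}\gamma^{-1}$, each of the five monomials building $\ell$ contributes a term $\gamma(\cdots)\gamma^{-1}$ reproducing that monomial, and their sum is exactly $\gamma\ell\gamma^{-1}$. It then remains to show that the collected anomalous terms vanish, which I would organize by type: (a) terms linear in $v^\mu$ with a single factor $\omega_\mu$; (b) terms linear in $u$ or $\pmu u$ with a single factor $\omega_\mu$; (c) terms quadratic in $\omega_\mu$; and (d) terms containing a single second derivative $\pmu\pnu\gamma$. The expansions $\pmu u'=[\omega_\mu,u']+\gamma(\pmu u)\gamma^{-1}$ and $\pmu\omega_\nu=(\pmu\pnu\gamma)\gamma^{-1}-\omega_\nu\omega_\mu$, together with $g_{\mu\nu}g^{\mu\alpha}=\delta^\alpha_\nu$ and the symmetry of $g^{\mu\nu}$, are the only algebraic facts needed.

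The key cancellations are expected to run as follows. The type-(a) terms, coming from the anomaly $-(\gamma v^\nu\gamma^{-1})\omega_\nu$ of $w$, from differentiating the transformed $v^\mu$ in $-\tfrac12\pmu v^\mu$, from the cross terms of $-\tfrac14 g_{\mu\nu}v^\mu u^{-1}v^\nu$, and from $\tfrac12(\pmu u)u^{-1}v^\mu$, should cancel after contracting the metric. The single second derivative $\pmu\pnu\gamma$ in the anomaly of $w$ should cancel the one produced when $\pmu$ hits $\omega_\nu$ inside $-\tfrac12\pmu {v'}^{\mu}$ (type (d)); the $\omega\omega$ terms, sourced by $w$, by $\pmu u'$, by the quadratic $v$-term and by $\tfrac12(\pmu u)u^{-1}v^\mu$, should cancel pairwise using the symmetry of $g^{\mu\nu}$ and the ordering $u'\omega_\mu(u')^{-1}$ (type (c)); and the lone term $(\prho g^{\rho\nu})u'\omega_\nu$ carried by the factor $g_{\mu\nu}(\prho g^{\rho\nu})$ of $\ell$ should cancel the $u'(\pmu g^{\mu\nu})\omega_\nu$ produced when $\pmu$ acts on $g^{\mu\nu}$ inside $\pmu {v'}^{\mu}$ (type (b)). The main obstacle is purely organizational: correctly tracking the non-commutativity of $u'$ with $\omega_\mu$ and the ordering inside products such as $u'\omega_\mu(u')^{-1}$, so that no spurious term is lost or double counted. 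Once $\ell\togt\gamma\ell\gamma^{-1}$ is established, the gauge invariance of $a_1$ follows at once: $\alpha$ and $g_4$ are metric scalars, $\tr[u^{-1}]$ is invariant since $u\togt\gamma u\gamma^{-1}$, and $\tr[u^{-2}\ell]\togt\tr[\gamma u^{-2}\ell\gamma^{-1}]=\tr[u^{-2}\ell]$ by cyclicity of the trace.
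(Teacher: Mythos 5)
Your proposal is correct and follows exactly the route the paper intends: the paper states this lemma without proof as a direct consequence of Lemma~\ref{lem-u vmu w cc gt}, i.e.\ one substitutes the gauge transformation rules for $u$, $v^\mu$, $w$ (specialized to $u^{\mu\nu}=g^{\mu\nu}u$) into $\ell$ and checks that every term involving derivatives of $\gamma$ cancels, and your pairings of the anomalous terms (the $\pmu\pnu\gamma$ terms between the $w$-anomaly and $-\tfrac12\pmu v^\mu$, the $v$-linear terms among $w$, $-\tfrac12\pmu v^\mu$, $-\tfrac14 g_{\mu\nu}v^\mu u^{-1}v^\nu$ and $\tfrac12(\pmu u)u^{-1}v^\mu$, the $(\prho g^{\rho\nu})$ term against the $\pmu g^{\mu\nu}$ term, and the quadratic $\omega\omega$ terms) are the correct ones. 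The concluding step, gauge invariance of $a_1$ from $u\to\gamma u\gamma^{-1}$, $\ell\to\gamma\ell\gamma^{-1}$ and cyclicity of the trace, is also exactly as the paper asserts.
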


As shown in \ref{subsection-diff-gauge-P}, with the help of a gauge connection $A_\mu$, one can change the variables $(u,v^\mu, w)$ to variables $(u, p^\mu, q)$ well adapted to change of coordinates and gauge transformations (see \eqref{eq-ccupq} and \eqref{eq-gtupq}). For $u^{\mu\nu} = g^{\mu \nu} u$, \eqref{vmu(A,p,q)} and \eqref{w(A,p,q)} becomes
\begin{align}
v^\mu &= 
\big[-\tfrac 12 g^{\mu\nu} g_{\rho\sigma} (\pnu g^{\rho\sigma}) + \pnu g^{\mu\nu} \big] u
+ g^{\mu\nu} (\pnu u)
+ g^{\mu\nu} (u A_\nu + A_\nu u)
+ p^\mu
\label{vmu(A,p,q) for u}
\\
w &= 
\big[-\tfrac 12 g^{\mu\nu} g_{\rho\sigma} (\pnu g^{\rho\sigma}) + \pnu g^{\mu\nu} \big] u A_\mu
+ g^{\mu\nu} (\pnu u) A_\mu
+ g^{\mu\nu} u (\pmu A_\nu)
\nonumber\\
&\quad\quad\quad\quad
+ g^{\mu\nu} A_\mu u A_\nu
+ p^\mu A_\mu
+ q
\label{w(A,p,q) for u}.
\end{align}
Relations \eqref{vmu(A,p,q) for u} and \eqref{w(A,p,q) for u} can be injected into \eqref{valeur de a1} to get an explicitly diffeomorphism and gauge invariant expression. In order to present the result of this straightforward computation, let us introduce the following notations.

Given the Christoffel symbols $\Gamma_{\mu\nu}^\rho \vc \tfrac 12 g^{\rho \sigma}(\partial_\mu g_{\sigma\nu} +\partial_\nu g_{\sigma\mu} -\partial_\sigma g_{\mu\nu})$, 
the Riemann curvature tensor $R_{\beta\mu\nu}^\alpha \vc \partial_\mu \Gamma_{\beta \nu}^\alpha -\partial _\nu \Gamma_{\beta\mu}^\alpha + \Gamma_{\mu \rho}^\alpha \Gamma_{\beta \nu}^\rho - \Gamma_{\nu\rho}^\alpha\Gamma_{\beta\mu}^\rho$, 
and the Ricci tensor $R_{\mu\nu} \vc R_{\mu\rho\nu}^\rho$, 
the scalar curvature $R\vc g^{\mu\nu}R_{\mu\nu}$ 
computed in terms of the derivatives of the inverse metric is 
\begin{multline}
\label{Scalar curvature}
R = 
g^{\mu\nu} g_{\rho\sigma} (\pmu\pnu g^{\rho\sigma})
- (\pmu\pnu g^{\mu\nu}) 
+ g_{\rho\sigma}(\pmu g^{\mu\nu})(\pnu g^{\rho\sigma}) 
+\tfrac12 g_{\rho\sigma}(\pmu g^{\nu\rho})(\pnu g^{\mu\sigma}) 
\\
-\tfrac 14 g^{\mu\nu} g_{\rho\sigma} g_{\alpha\beta} (\pmu g^{\rho\sigma})(\pnu g^{\alpha\beta}) 
-\tfrac 54 g^{\mu\nu} g_{\rho\sigma} g_{\alpha\beta} (\pmu g^{\rho\alpha})(\pnu g^{\sigma\beta}),
\end{multline}
and one has
\begin{align*}
g^{\mu\nu} \Gamma_{\mu\nu}^\rho = \tfrac 12 g^{\rho\sigma} g_{\alpha\beta} (\psigma g^{\alpha\beta}) - \psigma g^{\rho\sigma},
\qquad \Gamma_{\sigma\rho}^\sigma &= -\tfrac 12 g_{\alpha\beta} (\prho g^{\alpha\beta}).
\end{align*}
Let $\nabla_\mu$ be the (gauge) covariant derivative on $V$ (and its related bundles): 
$$
\nabla_\mu s \vc \partial_\mu s + A_\mu s\quad \text{for any section }s \text{ of }V.
$$
From \eqref{eq-gtupq}, $u$, $p^\mu$ and $q$ are sections of the endomorphism vector bundle $\End(V) = V^\ast \otimes V$ (while $v^\mu$ and $w$ are not), so that $\nabla_\mu u = \pmu u + [A_\mu, u]$ (and the same for $p^\mu$ and $q$). We now define $\hnabla_\mu$, which combines $\nabla_\mu$ and the linear connection induced by the metric $g$:
\begin{gather*}
\hnabla_\mu u \vc \pmu u + [A_\mu, u] = \nabla_\mu u,
\quad\quad\quad
\hnabla_\mu p^\rho \vc \pmu p^\rho + [A_\mu, p^\rho] + \Gamma^\rho_{\mu\nu} p^\nu = \nabla_\mu p^\rho + \Gamma^\rho_{\mu\nu} p^\nu
\\
\hnabla_\mu \hnabla_\nu u \vc \pmu \hnabla_\nu u + [A_\mu, \hnabla_\nu u] - \Gamma^\rho_{\mu\nu} \hnabla_\rho u
= \pmu \nabla_\nu u + [A_\mu, \nabla_\nu u] - \Gamma^\rho_{\mu\nu} \nabla_\rho u
\end{gather*}
so that
\begin{align*}
g^{\mu\nu} \hnabla_\mu \hnabla_\nu u &= g^{\mu\nu} \nabla_\mu \nabla_\nu u - \big[ \tfrac 12 g^{\mu\nu} g_{\alpha\beta} (\pmu g^{\alpha\beta}) - \pmu g^{\mu\nu} \big] \nabla_\nu u,
\\
\hnabla_\mu p^\mu &= \nabla_\mu p^\mu - \tfrac 12 g_{\alpha\beta} (\pmu g^{\alpha\beta}) p^\mu.
\end{align*}
Any relation involving $u, p^\mu, q, g$ and $\hnabla_\mu$ inherits the homogeneous transformations by change of coordinates and gauge transformations of these objects.

Let us state now the result of the computation of $a_1(x)$ in terms of $(u, p^\mu, q)$:
\begin{theorem}
\label{calcul de a1 en u pmu q}
Assume that $P = -(\abs{g}^{-1/2} \nmu \abs{g}^{1/2} g^{\mu\nu} u \nnu + p^\mu \nmu +q)$ is a selfadjoint elliptic operator acting on $L^2(M,V)$ for a 4-dimensional boundaryless Riemannian manifold $(M,g)$ and a vector bundle $V$ over $M$ where $u,\,p^\mu,\,q$ are sections of endomorphisms on $V$ with $u$ positive and invertible. Then, its local $a_1(x)$ heat-coefficient in \eqref{heat-trace-asympt} for $x\in M$ is
\begin{multline}
\label{valeur de a1 en u pmu q}
a_1 =  g_4 \big( 
\tfrac 16 R \tr[u^{-1}] + \tr[u^{-2} q]
\\
- \tfrac 12 g^{\mu\nu}  \tr[u^{-2} \hnabla_\mu \hnabla_\nu u ]
- \tfrac 12 \tr[u^{-2}  \hnabla_\mu p^\mu ]
\\
+ \tfrac 14 g^{\mu\nu}  \tr[u^{-2}  (\hnabla_\mu u - g_{\mu\rho} p^\rho) u^{-1} (\hnabla_\nu u + g_{\nu\sigma} p^\sigma) ]
\big)
\end{multline}
where $g_4=\tfrac{\abs{g}^{1/2}}{16 \pi^2}$.
\end{theorem}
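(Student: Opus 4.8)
The plan is to start from the expression
$a_1 = g_4(\alpha\,\tr[u^{-1}] + \tr[u^{-2}\ell])$
established in Theorem~\ref{calcul de a1}, in which $\alpha$ depends only on the metric while $\ell$ is a polynomial in $(u,v^\mu,w)$ and their first and second derivatives, and to substitute the change of variables \eqref{vmu(A,p,q) for u}--\eqref{w(A,p,q) for u} that trades $(v^\mu,w)$ for $(p^\mu,q)$ and the gauge connection $A_\mu$. After this substitution $a_1$ becomes a polynomial in $u,p^\mu,q,A_\mu$, the metric and their derivatives, and the whole point is to show that the $A_\mu$-dependence organizes, together with the Christoffel symbols, into the covariant derivative $\hnabla$ and into the scalar curvature, leaving an expression that is manifestly diffeomorphism- and gauge-invariant.

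A substantial simplification comes from the gauge-invariance lemma following Theorem~\ref{calcul de a1}: since $a_1(x)$ does not depend on the chosen gauge, and since the claimed right-hand side is built solely from the covariant objects $u,p^\mu,q$ (sections of $\End(V)$ by \eqref{eq-gtupq}), the metric and $\hnabla$, it suffices to verify the identity at a single point $x_0$ in a gauge where $A_\mu(x_0)=0$. At such a point every undifferentiated $A_\mu$ drops out of \eqref{vmu(A,p,q) for u}--\eqref{w(A,p,q) for u}, and I expect the only surviving connection contributions to be the ones containing $\pmu A_\nu$: one from the term $g^{\mu\nu}u(\pmu A_\nu)$ of $w$, and one from $-\tfrac12\pmu v^\mu$ in $\ell$. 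A short computation using cyclicity of the trace should show that these two $\pmu A_\nu$ contributions cancel, so that at $x_0$ one may compute as if $A_\mu\equiv 0$, with $\hnabla_\mu u=\pmu u$, $\hnabla_\mu p^\rho=\pmu p^\rho+\Gamma^\rho_{\mu\nu}p^\nu$ and $\hnabla_\mu\hnabla_\nu u=\pmu\pnu u-\Gamma^\rho_{\mu\nu}\prho u$.

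With $A_\mu$ eliminated, $w$ reduces to $q$ and $v^\mu$ to its metric/$u$/$p^\mu$ part, and I would sort the terms of $\tr[u^{-2}\ell]$ by field content. The term $w=q$ yields $\tr[u^{-2}q]$ directly. The pieces linear in $p^\mu$, arising from $-\tfrac12\pmu v^\mu$ and from $\tfrac12 g_{\mu\nu}(\prho g^{\rho\nu})v^\mu$, should, after inserting the identity $\Gamma^\sigma_{\sigma\rho}=-\tfrac12 g_{\alpha\beta}(\prho g^{\alpha\beta})$, assemble into $-\tfrac12\tr[u^{-2}\hnabla_\mu p^\mu]$. The pieces quadratic in $v^\mu$ coming from $-\tfrac14 g_{\mu\nu}v^\mu u^{-1}v^\nu$, combined with $\tfrac12(\pmu u)u^{-1}v^\mu$ and the $\pnu u$ part of $-\tfrac12\pmu v^\mu$, are the ones I expect to collapse into the symmetric form $\tfrac14 g^{\mu\nu}\tr[u^{-2}(\hnabla_\mu u-g_{\mu\rho}p^\rho)u^{-1}(\hnabla_\nu u+g_{\nu\sigma}p^\sigma)]$, the asymmetry between the two factors being exactly what the linear cross terms in $\pmu u$ supply. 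Finally the second-derivative-in-$u$ terms together with the $\Gamma\,\prho u$ corrections must recombine into $-\tfrac12 g^{\mu\nu}\tr[u^{-2}\hnabla_\mu\hnabla_\nu u]$.

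What remains are the terms proportional to $\tr[u^{-1}]$: the purely metric coefficient $\alpha$ of \eqref{valeur de a1} together with any $\tr[u^{-1}]$-terms produced when $u^{-2}$ meets a factor of $u$ in the substituted $\ell$. Here I would invoke the explicit expression \eqref{Scalar curvature} for $R$ in terms of derivatives of $g^{\mu\nu}$, together with the two Christoffel identities displayed before the statement, to show that this combination equals exactly $\tfrac16 R\,\tr[u^{-1}]$. I expect this last step to be the main obstacle: $\alpha$ is a sum of seven inhomogeneous metric-derivative monomials while \eqref{Scalar curvature} has six terms, so pinning down the coefficient $\tfrac16$ requires careful term-by-term bookkeeping of $(\pmu\pnu g^{\rho\sigma})$, $(\pmu\pnu g^{\mu\nu})$ and all the quadratic $(\partial g)(\partial g)$ structures, with no room for sign errors. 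The conceptual content is light---everything is forced by the already-established gauge invariance and by diffeomorphism covariance---but the algebraic bookkeeping of the metric-derivative terms is heavy, which is precisely why this computation was postponed rather than carried out in normal coordinates from the start.
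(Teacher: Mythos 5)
Your plan is correct, and it is essentially the verification the paper itself sketches: the paper's (very terse) proof proposes either a direct expansion of the right-hand side of \eqref{valeur de a1 en u pmu q} for comparison with Theorem~\ref{calcul de a1} after the substitution \eqref{vmu(A,p,q) for u}--\eqref{w(A,p,q) for u}, or a ``more subtle'' normal-coordinate argument exploiting that $a_1(x)$ is a scalar and $(u,p^\mu,q)$ are covariant. You follow the first route, but import the covariance idea of the second route in the gauge sector only, by normalizing $A_\mu(x_0)=0$. That refinement is sound and does exactly what you claim: the only surviving connection terms are $g^{\mu\nu}u(\pmu A_\nu)$ from $w$ and $-\tfrac 12 g^{\mu\nu}\bigl(u(\pmu A_\nu)+(\pmu A_\nu)u\bigr)$ from $-\tfrac 12 \pmu v^\mu$, whose sum $\tfrac 12 g^{\mu\nu}[u,\pmu A_\nu]$ dies inside $\tr[u^{-2}\,\cdot\,]$ by cyclicity; and your groupings of the $p$-linear terms (via $\Gamma_{\sigma\rho}^\sigma=-\tfrac 12 g_{\alpha\beta}(\prho g^{\alpha\beta})$), of the $(\partial u)$-linear terms, and of the quadratic form all close correctly. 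What the hybrid does \emph{not} buy is the heavy part: keeping the metric general, the $\tr[u^{-1}]$ coefficient requires a seven-monomial identity in $\partial g$ and $\partial\partial g$, which the paper's normal-coordinate variant would collapse (all first derivatives of $g$ vanish at $x_0$, leaving only the $\partial\partial g$ terms to compare with \eqref{Scalar curvature}).

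Two caveats for the execution. First, ``one may compute as if $A_\mu\equiv 0$'' is not literally true on the right-hand side either: at $x_0$ one has $\hnabla_\mu\hnabla_\nu u=\pmu\pnu u+[\pmu A_\nu,u]-\Gamma^\rho_{\mu\nu}\prho u$, so a commutator in $\pmu A_\nu$ survives there too and must be discarded by the same trace-cyclicity argument; the cancellation has to be run on both sides, not just on $\ell$. Second, the final metric identity your plan needs is, writing $\theta^\mu=-g^{\alpha\beta}\Gamma^\mu_{\alpha\beta}$ and $\phi_\mu=\tfrac 12 g_{\mu\nu}(\prho g^{\rho\nu})$,
\begin{equation*}
\alpha+\phi_\mu\theta^\mu-\tfrac 12\,\pmu\theta^\mu-\tfrac 14\,g_{\mu\nu}\theta^\mu\theta^\nu=\tfrac 16\,R\,,
\end{equation*}
and it holds provided the coefficient of $g^{\mu\nu}g_{\rho\sigma}(\pmu\pnu g^{\rho\sigma})$ in $\alpha$ equals $-\tfrac 1{12}$. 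That value is what the paper's own collection lists produce ($-\tfrac 12\cdot\tfrac 12$ from the $f_2$ argument $u\otimes u$ plus $1\cdot\tfrac 16$ from the $f_3$ argument $u\otimes u\otimes u$) and is the one consistent with Proposition~\ref{prop 1 sur 6 R}, whereas Theorem~\ref{calcul de a1} displays $-\tfrac 12$; all other monomials match. So when your bookkeeping turns up a mismatch in exactly that one term, it is a misprint upstream, not a failure of your method; with the corrected coefficient your verification closes.
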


\begin{proof}
This can be checked by an expansion of the RHS of \eqref{valeur de a1 en u pmu q}. A more subtle method goes using normal coordinates in \eqref{valeur de a1}, \eqref{vmu(A,p,q) for u}, \eqref{w(A,p,q) for u}, knowing that $a_1(x)$ is a scalar and $(u,\,p^\mu,\, q)$ are well adapted to change of coordinates.
\end{proof}

\begin{remark}
For the computation of $a_r(x)$ with $r\geq 2$, directly in terms of the variables $(u,\,p^\mu,\, q)$, the strategy is to use normal coordinates from the very beginning, which simplifies the computations of terms $\BB_k^{\mu_1\dots \mu_{2p}}$ of \eqref{terme de base}. Then an equivalent result to Theorem \ref{calcul de a1} would be obtained, but only valid in normal coordinates. Then by the change of variables \eqref{vmu(A,p,q) for u}, \eqref{w(A,p,q) for u}, a final result as Theorem \ref{calcul de a1 en u pmu q} would be calculated.
\end{remark}

\subsection{A dimension independent result}

We found in Theorem~\ref{calcul de a1 en u pmu q} that the coefficient of $\Tr[u^{-1}]$ is $g_4\, \tfrac 16 R$ in dimension 4. In fact, it is a general result, which is not a surprise:

\begin{proposition}
\label{prop 1 sur 6 R}
In any dimension $d \geq 1$, the coefficient in $a_1$ of $\Tr[u^{1-d/2}]$, written in variables $(u, p^\mu, q)$, is $g_d\,  \tfrac 16 R$.
\end{proposition}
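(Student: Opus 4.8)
The plan is to avoid recomputing all of $a_1$ in arbitrary dimension and instead to isolate the single coefficient in question by a scaling argument together with one explicit evaluation. Write $a_1 = g_d\sum_i \beta_i(g)\,\mathcal{O}_i$, where the $\mathcal{O}_i$ are the independent gauge- and diffeomorphism-invariant monomials built from $u,u^{-1},p^\mu,q$ and their covariant derivatives $\hnabla$, while each $\beta_i(g)$ is a scalar depending on the metric alone; by definition the quantity to be found is $g_d\beta_0(g)$, the coefficient of $\mathcal{O}_0 = \Tr[u^{1-d/2}]$. Since $\beta_0$ does not depend on $u,p,q$, it suffices to evaluate $a_1$ at any convenient configuration in which $\mathcal{O}_0$ alone survives.

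First I would use homogeneity. Rescaling $P\to\lambda P$ gives $a_1(\lambda P)=\lambda^{1-d/2}a_1(P)$, and in the variables of Theorem~\ref{calcul de a1 en u pmu q} this is the substitution $(u,p^\mu,q)\to(\lambda u,\lambda p^\mu,\lambda q)$ with $A_\mu$ and $g$ fixed. Hence every monomial $\mathcal{O}_i$ must be homogeneous of degree $1-d/2$ in $(u,p,q)$. The monomial $\Tr[u^{1-d/2}]$ has exactly this degree, whereas the \emph{a priori} possible competitor $R\,\Tr[u^{-d/2}]$ carries the wrong degree $-d/2$ and is therefore absent. Moreover $a_1(x)$ is a universal polynomial in the $2$-jet of the symbol, of order two in the metric derivatives, so $\beta_0(g)$ is a scalar Riemannian invariant with exactly two derivatives of $g$.

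I would then evaluate at $u=\bbbone_N$ (constant), $p^\mu=0$, $q=0$, with the trivial connection $A_\mu=0$. Every $\mathcal{O}_i$ other than $\mathcal{O}_0$ contains at least one factor among $\hnabla_\mu u$, $\hnabla_\mu\hnabla_\nu u$, $\hnabla_\mu p^\mu$, $p^\mu$, $q$, each of which vanishes here, so $a_1 = g_d\,\beta_0\,\Tr[\bbbone_N^{1-d/2}] = N\,g_d\,\beta_0$. On the other hand, the operator of Theorem~\ref{calcul de a1 en u pmu q} collapses in these variables to $P = -\abs{g}^{-1/2}\pmu\!\big(\abs{g}^{1/2}g^{\mu\nu}\pnu\big)\otimes\bbbone_N = -\Delta_g\otimes\bbbone_N$, i.e. $N$ decoupled copies of the scalar Laplace--Beltrami operator, whose first non-trivial heat-coefficient density is the classical $g_d\,\tfrac16 R$ (see \cite{Gilkeybook}). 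Comparing, $N g_d\beta_0 = N g_d\,\tfrac16 R$, so $\beta_0=\tfrac16 R$, which is the claim.

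A fully self-contained alternative, staying inside the present formalism, is to track the pure-$u$ sector directly. Since $H=\abs{\xi}_g^2\,u$ commutes with a constant $u$, the exponents telescope and $\vol(\Delta_k)=1/k!$, giving the clean identity $f_k(\xi)[u^{\otimes k}]=\tfrac1{k!}\,u^k e^{-\abs{\xi}_g^2 u}$; the Gaussian integral \eqref{G(g)} then turns each pure-$u$ contribution into $\tfrac{g_d}{k!}\,G(g)_{\mu_1\dots\mu_{2p}}\,[\,\cdot\,]^{\mu_1\dots\mu_{2p}}\,u^{1-d/2}$, using $k-p=1$. All the factors here (the $\tfrac1{k!}$, the tensors $G(g)$, and the collection coefficients of Section~\ref{subsection-collectcontraction}) are independent of $d$: the only mechanism producing an explicit $d$ is a contraction $g^{\mu\nu}g_{\mu\nu}=d$, which occurs solely in terms carrying a factor $\pmu u$ or $\pmu\pnu u$, hence never in the pure-$u$ sector. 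So $\beta_0$ is the same $d$-independent scalar found for $d=4$, namely $\tfrac16 R$. In either route the main obstacle is the bookkeeping needed to guarantee that nothing but $R\,\Tr[u^{1-d/2}]$ feeds into this sector — in particular that the substitution \eqref{vmu(A,p,q) for u}--\eqref{w(A,p,q) for u} of $(v^\mu,w)$ by $(p^\mu,q)$ contributes no extra piece; the scaling argument of the second paragraph is precisely what renders this transparent and dimension-free.
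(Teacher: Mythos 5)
Your proposal is correct, but it proves the proposition by a genuinely different route than the paper. The paper's proof stays entirely inside its own machinery: it first notes, exactly as in your second route, that all $R_i(u)$ commute with $u$, so that $(m\circ I_{d/2+k-1,k})[\bbbone\otimes u^{\otimes k}]=\text{Vol}(\Delta_k)\,u^{1-d/2}=\tfrac{1}{k!}\,u^{1-d/2}$; it then substitutes \eqref{vmu(A,p,q) for u}--\eqref{w(A,p,q) for u} into the lists of Section~\ref{subsection-collectcontraction}, collects every pure $u\otimes\cdots\otimes u$ contribution for $k=2,3,4$, and verifies by hand that the weighted sum of the metric coefficients reproduces exactly the expression \eqref{Scalar curvature} of $\tfrac 16 R$ --- a computation that is dimension-free because, as you correctly observe, no pure-$u$ coefficient carries an explicit $d$ (the $g^{\mu\nu}g_{\mu\nu}=d$ contractions only occur on terms with $\pmu u$ or $\pmu\pnu u$ factors). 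So your second route is essentially the paper's proof, except that you replace the final resummation by this $d$-independence observation plus the $d=4$ value imported from Theorem~\ref{calcul de a1 en u pmu q}; that works, but it is then logically dependent on that theorem, whereas the paper's proof is not. Your first route is the truly different argument: homogeneity of degree $1-d/2$ under $(u,p^\mu,q)\to(\lambda u,\lambda p^\mu,\lambda q)$ (correct, since $\lambda P$ corresponds to this substitution with $g$ and $A_\mu$ fixed, and $a_1(\lambda P)=\lambda^{1-d/2}a_1(P)$) rules out every other underived pure-$u$ invariant, and evaluation at $u=\bbbone_N$, $p^\mu=0$, $q=0$, $A_\mu=0$ collapses $P$ to $N$ copies of the Laplace--Beltrami operator, whose classical coefficient $g_d\,\tfrac 16 R$ you import from \cite{Gilkeybook}. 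What this buys is brevity: no bookkeeping whatsoever, and the conclusion is manifestly dimension-free. What it costs is self-containedness: it rests on the classical scalar result and on the structural fact --- which you assert rather than derive --- that $a_1$ admits a covariant, single-trace expansion with metric-only coefficients, so that the special evaluation really isolates the coefficient in question. This trade-off is worth noting because the Remark following Proposition~\ref{prop 1 sur 6 R} makes precisely this point: the paper values that $\tfrac 16 R$ comes out of the machinery of Sections~\ref{Section-algebraic}--\ref{Section-example} by direct computation (a non-trivial consistency check of the whole method), in contrast with arguments that introduce $R$ via invariance and fix the $\tfrac 16$ from a known special case --- which is exactly the style of your first route.
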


\begin{proof}
Since $r=1$, we already know that $f_k$ is associated to $I_{d/2+k,k+1}$ and $1\leq k \leq 4$.\\
By the very definition, the operators $I_{\alpha,k}$ applied on $\bbbone \otimes u\otimes \cdots \otimes u$ ($k$ factors $u$) are easily computed since all $R_i(u)$ commute with $u$, so that 
$$
(m\circ I_{d/2+k-1,k})[\bbbone \otimes \underbrace{u\otimes \cdots \otimes u}_{k}]= \text{Vol}(\Delta_k)\, u^{-(d/2+k-1)+k}=\tfrac{1}{k!}\,u^{-d/2+1}.
$$
From the results obtained in \ref{subsection-collectcontraction}, we swap $(u, v^\mu, w)$ with the new variables $(u, p^\mu, q)$ as in \eqref{vmu(A,p,q) for u} and \eqref{w(A,p,q) for u}, and then collect only the terms $u\otimes \cdots \otimes u$ ($k$ factors $u$).\\
\underline{Contribution of $f_2$ giving $u\otimes u$:} (with their origin before the change of variables)
\begin{align*}
u\otimes u & \to
-\tfrac{1}{2} g^{\mu\nu} g_{\rho\sigma}(\pmu\pnu  g^{\rho\sigma})
\\
v^\mu\otimes u & \to 
-\tfrac{1}{2}  g_{\rho\sigma}(\pmu  g^{\rho\sigma})[-\tfrac{1}{2} g^{\mu\nu} g_{\alpha\beta}(\pnu  g^{\alpha\beta})+(\pnu g^{\mu\nu})] 
\\
v^\mu \otimes v^\nu & \to 
-\tfrac 12 g_{\mu\nu}
[-\tfrac 12 g^{\mu\sigma}g_{\alpha\beta}(\psigma g^{\alpha\beta})+(\psigma g^{\sigma \mu})]
[-\tfrac 12 g^{\nu\rho}g_{\gamma\delta}(\prho g^{\gamma\delta})+(\prho g^{\rho\nu})]
\\
u\otimes \pmu v^\mu & \to 
+ \tfrac 12 g_{\alpha\beta} (\pmu g^{\mu\nu})(\pnu g^{\alpha\beta})
+ \tfrac 12 g^{\mu\nu}(\pmu g_{\alpha\beta})(\pnu g^{\alpha\beta})
+ \tfrac 12 g^{\mu\nu}g_{\alpha\beta}(\pmu\pnu g^{\mu\sigma})
 -\pmu\pnu g^{\mu\nu}\,.
\end{align*}
\underline{Contribution of $f_3$ giving $u\otimes u\otimes u$:} 
\begin{align*}
& +\tfrac12 g^{\mu\nu} g_{\rho \sigma} g_{\alpha\beta}(\pmu g^{\rho\sigma})(\pnu g^{\alpha\beta}) + g^{\mu\nu} g_{\rho \sigma} g_{\alpha\beta}(\pmu g^{\rho\alpha})(\pnu g^{\sigma\beta}) \\
& 
+ 3 \,
[\tfrac 12 g_{\rho\sigma}(\pmu g^{\rho\sigma}) + g_{\mu\sigma} (\prho g^{\rho\sigma})]
\,
[ -\tfrac 12 g^{\mu\nu} g_{\alpha\beta} (\pnu g^{\alpha\beta})+ (\pnu g^{\mu\nu})]
\\
& 
+ g_{\rho\sigma}(\pmu g^{\mu\nu})(\pnu g^{\rho\sigma}) +2 g_{\rho\sigma}(\pmu g^{\nu\rho})(\pnu g^{\mu \sigma}) 
\\
& 
+g^{\mu\nu} g_{\rho\sigma}(\pmu\pnu  g^{\rho\sigma}) +2 (\pmu\pnu g^{\mu\nu}) 
\end{align*}
\underline{Contribution of $f_4$ giving $u\otimes u\otimes u\otimes u$:} 
\begin{multline*}
3\,\big[ 
-\tfrac12 g^{\mu\nu} g_{\rho \sigma} g_{\alpha\beta}(\pmu g^{\rho\sigma})(\pnu g^{\alpha\beta}) 
-g^{\mu\nu} g_{\rho \sigma} g_{\alpha\beta}(\pmu g^{\rho\alpha})(\pnu g^{\sigma\beta})
-2 g_{\rho\sigma}(\pmu g^{\mu\nu})(\pnu g^{\rho\sigma})
\\
 -2g_{\rho\sigma}(\pmu g^{\mu\rho})(\pnu g^{\nu\sigma}) 
 -2 g_{\rho\sigma}(\pmu g^{\nu\rho})(\pnu g^{\mu\sigma})\big].
\end{multline*}
Once these contributions of $f_k$ are multiplied by $\tfrac{1}{k!}$, one checks that their sum is exactly $\tfrac 16 R$ using the scalar curvature $R$ written as in \eqref{Scalar curvature}.
\end{proof}

\begin{remark}
In the present method, the factor $\tfrac 16 R$ is explicitly and straightforwardly computed from the metric entering  $u^{\mu\nu} = g^{\mu\nu} u$, as in \cite{Avramidi2006} for instance. Many methods introduce $R$ using diffeomorphism invariance and compute the coefficient $\tfrac 16$ using some “conformal perturbation” of $P$ (see \cite[Section 3.3]{Gilkeybook1}).
\end{remark}

\subsection{\texorpdfstring{Case of scalar symbol: $u(x)=f(x)\, \bbbone_N$}{Case u(x)=f(x)1}}

Let us consider now the specific case $u(x)=f(x)\, \bbbone_N$, where $f$ is a nowhere vanishing positive function. Then \eqref{vmu(A,p,q) for u} simplifies to
\begin{equation*}
v^\mu = 
[-\tfrac 12 g^{\mu\nu} g_{\rho\sigma} (\pnu g^{\rho\sigma}) + \pnu g^{\mu\nu} ] f \, \bbbone_N
+ g^{\mu\nu} (\pnu f)\, \bbbone_N
+ 2 f\,g^{\mu\nu}  A_\nu
+ p^\mu
\end{equation*}
and we can always find $A_\mu$ such that $p^\mu = 0$:
\begin{equation*}
A_\mu = \tfrac 12 \big( g_{\mu\nu} f^{-1} v^\nu + [\tfrac 12 g_{\rho\sigma} (\pmu g^{\rho\sigma}) - g_{\mu\nu} (\prho g^{\rho\nu}) - f^{-1} (\pmu f)]\,  \bbbone_N \big).
\end{equation*}
One can check, using \eqref{eq-vmu cc gt}, that $A_\mu$ satisfies the correct gauge transformations. This means that $P$ can be written as
\begin{equation*}
P = -(\abs{g}^{-1/2} \nmu \abs{g}^{1/2} g^{\mu\nu} f \nnu +q)
\end{equation*}
where the only matrix-dependencies are in $q$ and $A_\mu$.

Since $u$ is in the center, $\nabla_\mu u =( \pmu f) \, \bbbone_N$ and \eqref{valeur de a1 en u pmu q} simplifies as
\begin{equation*}
a_1 =  g_4 \big( 
\tfrac{N}{6} \, f^{-1}\,R + f^{-2} \tr[q]
- \tfrac{N}{2} g^{\mu\nu}  f^{-2} (\pmu \pnu f)
+ \tfrac{N}{2} g^{\mu\nu}  f^{-2} \Gamma^\rho_{\mu\nu} (\prho f)
+ \tfrac{N}{4} g^{\mu\nu}   f^{-3} (\pmu f) (\pnu f)
\big)
\end{equation*}
in which $A_\mu$ does not appears. Now, if $f$ is constant, we get well-known result (see \cite[Section 3.3]{Gilkeybook1}):
\begin{equation*}
a_1 =  g_4 \big(  \tfrac{N}{6}\,f^{-1}\, R + f^{-2} \tr[q] \big).
\end{equation*}

\section{About the method}
\label{method}

\subsection{Existence}

For the operator $P$ given in \eqref{def-P}, the method used here assumes only the existence of asymptotics \eqref{heat-trace-asympt}. This is the case when $P$ is elliptic and selfadjoint. \\
Selfadjointness of $P$ on $L^2(M, V)$ is not really a restriction since we remark that given an arbitrary  family of $u^{\mu\nu}$ satisfying \eqref{Hyp-defpositiv}, skewadjoint matrices $\tilde{v}^\mu$ and a selfadjoint matrix $\tilde{w}$, we get a formal selfadjoint elliptic operator $P$ defined by \eqref{def-P} where 
\begin{align*}
& v^\mu = \tilde{v}^\mu + (\pnu \log \abs{g}^{1/2}) \,u^{\mu\nu} + \pnu u^{\mu\nu},\\
& w = \tilde{w} + \tfrac 12 [-\pmu \tilde{v}^\mu +(\pmu \log \abs{g}^{1/2}) \,\tilde{v}^\mu].
\end{align*}

A crucial step in our method is to be able to compute the integral \eqref{Tkp versus Ink} for a general $u^{\mu\nu}$. The case $u^{\mu\nu}=g^{\mu\nu}u$ considered in Section \ref{Section-example} makes that integral manageable.

\subsection{\texorpdfstring{On explicit formulae for $u^{\mu\nu}=g^{\mu\nu}u$}{On explicit formulae for umunu}}

For $u^{\mu\nu} = g^{\mu\nu} u$, the proposed method is a direct computational machinery. Since the method can be computerized, this could help to get $a_r$ in Case 2 ($d$ even and $r<d/2$). Recall the steps: 1) expand the arguments of the initial $f_k$'s, 2) contract with the tensor $G(g)$, 3) apply to the corresponding operators $I_{\alpha, k}$, 4) collect all similar terms. Further eventual steps: 5) change variables to $(u, p^\mu, q)$, 6) identify (usual) Riemannian tensors and covariant derivatives (in terms of $A_\mu$ and Christoffel symbols).

Is it possible to get explicit formulae for $a_r$ from the original ingredients $(u,v^\mu,w)$ of $P$? An explicit formula should look like \eqref{valeur de a1} or \eqref{valeur de a1 en u pmu q}. This excludes the use of spectral decomposition of $u$ as in \eqref{integration spectrale} which could not be recombined as
\begin{align*}
\sum_{\text{finite sum}} \tr[h_{(0)}(u) B_1h_{(1)}(u) B_2 \cdots B_k h_{(k)}(u)]
\end{align*}
where the $h_i$ are continuous functions and the $B_i$ are equal to $u,\,v^\mu,\,w$ or their derivatives. The obstruction to get such formula could only come from the operators $I_{\alpha,k}$ and not from the arguments $B_i$. Thus the matter is to understand the $u$-dependence of $I_{\alpha,k}$.

Let us consider $I_{\alpha, k}$ as a map $u \mapsto I_{\alpha, k}(u)$. An operator map $u \mapsto A(u) \in \B(\hH_k)$ is called {\it $u$-factorizable} (w.r.t. the tensor product) if it can be written as
\begin{equation*}
A(u) = \sum_{\text{finite sum}} R_0(h_{(0)}(u)) \, R_1(h_{(1)}(u)) \, \cdots  \, R_k(h_{(k)}(u))
\end{equation*}
where the $h_{(i)}$ are continuous functions on $\R^*_+$.

\begin{lemma}
\label{A(u) factorizable}
Let $u \mapsto A(u) = F(R_0(u), \dots, R_k(u))$ the operator map defined by a continuous function $F : (\R^*_+)^{k+1} \to \R^*_+$. Then $A$ is $u$-factorizable iff $F$ is decomposed as
\begin{equation}
\label{F decomposed}
F(r_0, \dots, r_k) = \sum_{\text{finite sum}} h_{(0)}(r_0) h_{(1)}(r_1) \cdots h_{(k)}(r_k) 
\end{equation}
for continuous functions $h_{(i)}$.
\end{lemma}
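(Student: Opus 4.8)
The plan is to diagonalise both sides simultaneously and thereby reduce the operator identity defining $u$-factorizability to a scalar identity among joint eigenvalues. The engine is Lemma~\ref{LandR}: for a positive (hence diagonalizable) matrix $u=C\,\operatorname{diag}(\lambda_1,\dots,\lambda_N)\,C^{-1}$, the operators $R_0(u),\dots,R_k(u)$ form a commuting family of selfadjoint operators on $\hH_k$ which is simultaneously diagonalized by the explicit basis $\{C^{i_0 j_0}\otimes\cdots\otimes C^{i_k j_k}\}$, with $R_i(u)$ acting through the eigenvalue $\lambda_{j_i}$. First I would use this to record how each side acts on a basis vector. The joint functional calculus gives
\[
A(u)\,[C^{i_0 j_0}\otimes\cdots\otimes C^{i_k j_k}] = F(\lambda_{j_0},\dots,\lambda_{j_k})\, C^{i_0 j_0}\otimes\cdots\otimes C^{i_k j_k},
\]
while, since $h_{(i)}(u)=C\,\operatorname{diag}(h_{(i)}(\lambda_1),\dots,h_{(i)}(\lambda_N))\,C^{-1}$, the candidate factorized operator $\sum R_0(h_{(0)}(u))\cdots R_k(h_{(k)}(u))$ acts on the same vector through $\sum\prod_{i=0}^k h_{(i)}(\lambda_{j_i})$. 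Because the vectors $C^{i_0 j_0}\otimes\cdots\otimes C^{i_k j_k}$ form a basis of $\hH_k$, an operator diagonal in this basis is determined by its eigenvalues, so each operator identity is equivalent to the coincidence of the corresponding eigenvalue families.

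The implication \eqref{F decomposed} $\Rightarrow$ factorizability is then immediate: specialising $r_i=\lambda_{j_i}$ in \eqref{F decomposed} makes the two eigenvalue families agree on every common eigenvector, hence $A(u)=\sum R_0(h_{(0)}(u))\cdots R_k(h_{(k)}(u))$ for all positive $u$.

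For the converse, assume $A$ is $u$-factorizable with continuous $h_{(i)}$. Matching eigenvalues on the common eigenbasis gives, for every positive $u$ of size $N$,
\[
F(\lambda_{j_0},\dots,\lambda_{j_k}) = \sum \prod_{i=0}^k h_{(i)}(\lambda_{j_i}), \qquad j_0,\dots,j_k\in\{1,\dots,N\}.
\]
It remains to promote this from eigenvalue tuples to all of $(\R_+^*)^{k+1}$, which is the only delicate point. Here I would let the matrix size be free and take $N\ge k+1$: given an arbitrary $(r_0,\dots,r_k)\in(\R_+^*)^{k+1}$, I choose $u$ positive with $\lambda_1=r_0,\dots,\lambda_{k+1}=r_k$ and set $j_i=i+1$, so that $(\lambda_{j_0},\dots,\lambda_{j_k})=(r_0,\dots,r_k)$ is realized. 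This produces exactly the decomposition \eqref{F decomposed} on the whole of $(\R_+^*)^{k+1}$, with the same continuous $h_{(i)}$.

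I expect the realizability step to be the crux: feeding the $k+1$ tensor slots with independent, arbitrary positive values forces $u$ to carry at least $k+1$ distinct eigenvalues, i.e. one must allow $u$ to range over positive matrices of size $N\ge k+1$. For a fixed small $N$ the attainable eigenvalue tuples fill only the subset of $(\R_+^*)^{k+1}$ with at most $N$ distinct entries, which is not dense, so no appeal to continuity could recover the full identity; the continuity hypotheses on $F$ and the $h_{(i)}$ serve only to make the functional calculus and the decomposition well posed, not to bridge this gap.
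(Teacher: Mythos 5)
Your proof is correct and, at its core, it is the same spectral argument as the paper's: the paper expands $A(u)=\sum_{i_0,\dots,i_k} F(\lambda_{i_0},\dots,\lambda_{i_k})\, R_0(\pi_{i_0})\cdots R_k(\pi_{i_k})$ through the eigenprojections $\pi_i$ of $u$ and regroups the terms, which is exactly the projection form of your eigenbasis computation with the vectors $C^{i_0j_0}\otimes\cdots\otimes C^{i_kj_k}$ of Lemma~\ref{LandR}. Where you genuinely add something is the converse direction, which the paper dispatches with the single sentence that the computation ``can be seen in the other way around''. You correctly isolate the missing step: the operator identities, one for each positive $u\in M_N$, only constrain $F$ on tuples of eigenvalues of a single $N\times N$ matrix, i.e.\ on tuples with at most $N$ distinct entries, which form a closed nowhere dense subset of $(\R^*_+)^{k+1}$ when $N<k+1$; and you repair this by requiring $N\geq k+1$ (equivalently, letting the size of $u$ be free). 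This hypothesis is really needed and is tacit in the paper's proof: for $N=1$, $k=1$ and $F(r_0,r_1)=(r_0+r_1)^{-1}$, the operator $A(u)=F(u,u)=(2u)^{-1}$ is trivially $u$-factorizable, yet $F$ admits no decomposition \eqref{F decomposed} by the paper's final lemma, so the ``only if'' direction fails for small $N$. The caveat is not pedantry, because the paper uses precisely that direction, in contrapositive form, to conclude that $\widetilde{I}_{3/2,2}(R_0(u),R_1(u))$ is not $u$-factorizable from the non-decomposability of the corresponding two-variable function; there the function has only two arguments, so fiber dimension $N\geq 2$ suffices, but the size hypothesis ought to be stated explicitly in the lemma.
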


\begin{proof}
Let $\lambda_i$ be the eigenvalues of $u$ and $\pi_i$ the associated eigenprojections. If $F$ is decomposed, then by functional calculus, one has
\begin{align*}
A(u) &= \sum_{i_0, \dots, i_k} F(\lambda_{i_0}, \dots, \lambda_{i_k}) \, R_0(\pi_{i_0}) \cdots R_k(\pi_{i_k})
\\
&= 
\sum_{i_0, \dots, i_k} \sum_{\text{finite sum}}  h_{(0)}(\lambda_{i_0}) \cdots h_{(k)}(\lambda_{i_k}) \,  R_0(\pi_{i_0}) \cdots R_k(\pi_{i_k})
\\
&=
\sum_{\text{finite sum}} R_0(h_{(0)}(u)) \, \cdots  \, R_k(h_{(k)}(u)).
\end{align*}
If $A$ is $u$-factorizable, then this computation can be seen in the other way around to show that $F$ is decomposed.
\end{proof}

The general solutions \eqref{I cases 2 3} and \eqref{I1k} for the operators $I_{\alpha, k}$ are not manifestly $u$-factori-zable because of the factors $(r_i-r_j)^{-1}$. For Case~2, Proposition~\ref{prop I case 2} shows that $I_{\alpha, k}$ is indeed a $u$-factorizable operator (see also \eqref{I case 2 R(u)}).

The explicit expressions of the operators $I_{\alpha, k} \in \B(\hH_k)$ don't give a definitive answer about the final formula: for instance, when applied to an argument containing some $u$'s, the expression can simplify a lot (see for example the proof of Proposition~\ref{prop 1 sur 6 R}). Moreover, the trace and the multiplication introduce some degrees of freedom in the writing of the final expression. This leads us to consider two operators $A$ and $A'$ as {\it equivalent} when
\begin{equation*}
\tr \circ\, m \circ \kappa^* \circ A[B_1 \otimes \cdots \otimes B_k]  = \tr \circ\,  m \circ \kappa^* \circ A'[B_1 \otimes \cdots \otimes B_k] 
\end{equation*}
for any $B_1 \otimes \cdots \otimes B_k \in \H_k$. The equivalence is reminiscent of the lift from $\B(\H_k,M_N)$ to $\B(\hH_k)$ (see Remark \ref{lift}) combined with the trace.

\begin{lemma}
With
\begin{equation*}
\widetilde{I}_{\alpha, k}(r_0, \dots, r_{k-1}) \vc I_{\alpha, k}(r_0, \dots, r_{k-1}, r_0) \quad(=\lim_{r_k \to r_0} I_{\alpha, k}(r_0, \dots, r_{k-1}, r_k) ),
\end{equation*}
the operators $\widetilde{I}_{\alpha, k} \vc \widetilde{I}_{\alpha, k}(R_0(u), \dots, R_{k-1}(u)) = I_{\alpha, k}(R_0(u), \dots, R_{k-1}(u), R_0(u)) \in \B(\hH_k)$ is equivalent with the original $I_{\alpha, k}$.
\end{lemma}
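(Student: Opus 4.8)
The plan is to diagonalize the strictly positive matrix $u=\sum_i\lambda_i\pi_i$ and to compare the two operators through the functional $\tr\circ m\circ\kappa^*$ that defines the equivalence, tested against a generic argument $B_1\otimes\cdots\otimes B_k\in\H_k$. Because $u$ is positive its eigenprojections obey $\pi_i\pi_j=\delta_{ij}\pi_i$ and $\sum_i\pi_i=\bbbone$, and by the spectral decomposition \eqref{I projectors} both $I_{\alpha,k}$ and $\widetilde{I}_{\alpha,k}$ are finite linear combinations of the commuting operators $R_0(\pi_{i_0})\cdots R_k(\pi_{i_k})$. The whole statement then reduces to matching two scalar expressions.

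First I would evaluate the original side. Feeding $\bbbone\otimes B_1\otimes\cdots\otimes B_k$ into \eqref{I projectors}, multiplying with $m$ and taking the trace gives
\begin{align*}
\tr\circ m\circ\kappa^*\circ I_{\alpha,k}[B_1\otimes\cdots\otimes B_k]
&=\sum_{i_0,\dots,i_k} I_{\alpha,k}(\lambda_{i_0},\dots,\lambda_{i_k})\,\tr[\pi_{i_0}B_1\pi_{i_1}B_2\cdots B_k\pi_{i_k}].
\end{align*}
Here is the key step: cyclicity of the trace brings $\pi_{i_k}$ in front of $\pi_{i_0}$, and orthogonality $\pi_{i_k}\pi_{i_0}=\delta_{i_0i_k}\pi_{i_0}$ together with $\pi_{i_0}^2=\pi_{i_0}$ forces $i_k=i_0$ and collapses the sum to
\begin{align*}
\sum_{i_0,\dots,i_{k-1}} I_{\alpha,k}(\lambda_{i_0},\dots,\lambda_{i_{k-1}},\lambda_{i_0})\,\tr[\pi_{i_0}B_1\pi_{i_1}B_2\cdots B_{k-1}\pi_{i_{k-1}}B_k].
\end{align*}

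For the other side I would note that $\widetilde{I}_{\alpha,k}$, obtained by substituting $R_0(u)$ into the last slot, carries no $R_k(u)$ and hence acts as the identity on the last tensor factor of $\hH_k$; its spectral form is $\sum_{i_0,\dots,i_{k-1}} I_{\alpha,k}(\lambda_{i_0},\dots,\lambda_{i_{k-1}},\lambda_{i_0})\,R_0(\pi_{i_0})\cdots R_{k-1}(\pi_{i_{k-1}})$. Running the identical computation — apply to $\bbbone\otimes B_1\otimes\cdots\otimes B_k$, multiply, trace — reproduces verbatim the collapsed expression just obtained, so the two functionals agree on every $B_1\otimes\cdots\otimes B_k$ and the operators are equivalent. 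The steps needing care are only bookkeeping rather than a genuine obstacle: one should check that $\widetilde{I}_{\alpha,k}$ is a bona fide element of $\B(\hH_k)$, its value on the diagonal $r_k=r_0$ being well defined and equal to the stated limit thanks to the continuity of $I_{\alpha,k}$ proved in Proposition \ref{properties of Ink} iii), and one should track that the untouched factor $B_k$ sits at the end of the trace word in both computations.
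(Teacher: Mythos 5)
Your proof is correct and follows essentially the same route as the paper's: spectral decomposition of $u$ into eigenprojections, evaluation of $\tr \circ\, m \circ \kappa^* \circ I_{\alpha,k}$ as a sum over multi-indices, then cyclicity of the trace plus orthogonality $\pi_{i_k}\pi_{i_0}=\delta_{i_0 i_k}\pi_{i_0}$ to force $i_k=i_0$ and identify the collapsed sum with the corresponding expression for $\widetilde{I}_{\alpha,k}$. The only additions beyond the paper's argument are your bookkeeping remarks (well-definedness via Proposition~\ref{properties of Ink}~iii) and the explicit spectral form of $\widetilde{I}_{\alpha,k}$), which are harmless and consistent with the paper.
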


\begin{proof}
For any $B_1 \otimes \cdots \otimes B_k \in \H_k$, using previous notations, one has
\begin{align*}
\tr \circ\, m \circ \kappa^* \circ I_{\alpha, k} [B_1 \otimes \cdots \otimes B_k]
&=
\sum_{i_0,\dots,i_k} I_{\alpha, k}(\lambda_{i_0},\dots,\lambda_{i_k} ) \tr \big( \pi_{i_0} B_1 \pi_{i_1} \cdots B_k \pi_{i_k} \big)
\\
&=
\sum_{i_0,\dots,i_k} I_{\alpha, k}(\lambda_{i_0},\dots,\lambda_{i_k} ) \tr \big( \pi_{i_k}  \pi_{i_0} B_1 \pi_{i_1} \cdots B_k \big)
\\
&=
\sum_{i_0,\dots,i_k} I_{\alpha, k}(\lambda_{i_0},\dots,\lambda_{i_k} )\, \delta_{i_0, i_k} \, \tr \big( \pi_{i_0} B_1 \pi_{i_1} \cdots B_k \big)
\\
&=
\sum_{i_0,\dots,i_{k-1}} \widetilde{I}_{\alpha, k}(\lambda_{i_0},\dots,\lambda_{i_{k-1}} ) \tr \big( \pi_{i_0} B_1 \pi_{i_1} \cdots B_k \big)
\\
&=
\tr \circ\, m \circ \kappa^* \circ \widetilde{I}_{\alpha, k}[B_1 \otimes \cdots \otimes B_k].
\end{align*}
\end{proof}
The equivalence between $\widetilde{I}_{\alpha, k}$ and $I_{\alpha, k}$ seems to be the only generic one we can consider.

We have doubts on the fact that the operators $I_{\alpha,k}$ can be always equivalent to some $u$-factorizable operators. Consider for instance the situation where $M$ is one-dimensional and we want to compute $a_1$ ($d = 1$ and $r=1$). Then, one has
\begin{align*}
&I_{3/2, 2}(r_0, r_1, r_2) = 4 \prod_{0 \leq i < j \leq 2} (\sqrt{r_i}+\sqrt{r_j}\,)^{-1},\\
&\widetilde{I}_{3/2, 2}(r_0, r_1) = 2 \sqrt{r_0}^{\,-1} (\sqrt{r_0}+\sqrt{r_1}\,)^{-2}.
\end{align*}
As shown in next lemma, the function $\widetilde{I}_{\alpha, k}$ cannot be decomposed as in \eqref{F decomposed}, thus by Lemma~\ref{A(u) factorizable}, the operator $\widetilde{I}_{3/2, 2}(R_0(u,R_1(u))$ is not factorizable:  when applied to $\tfrac 12 g_{\mu\nu}v^\mu\otimes v^\nu$ (one of the arguments of $f_2$) it gives the following contribution to $a_1$
\begin{align*}
g_{\mu\nu} \sum_{i_0,i_1} \sqrt{\lambda_{i_0}}^{\,-1} (\sqrt{\lambda_{i_0}} +\sqrt{\lambda_{i_1}}\,)^{-2} \,\tr(\pi_{i_0}v^\mu \pi_{i_1} v^\nu)
\end{align*}
which cannot be cancelled by other terms in the list of Section \ref{subsection-collectcontraction}. This typically generate a non-explicit formula unless $u$ and $v^\mu$ have commutation relations.\\
As a consequence of the existence of roots (see \eqref{recursive}), similar phenomena occur for any odd dimension. We have the same conclusion when $d$ is even: all $a_r$ with $r\geq d/2$ will be given by non-explicit expressions (see \eqref{I1k}). In particular, in dimension 4, $a_2$ will not be explicit despite $a_1$.

\begin{lemma}
The function $(\R^*_+)^2 \to \R^*_+, (x,y) \mapsto (x+y)^{-2}$ cannot be written in the form $\sum_{\text{finite}} h_{(1)}(x) h_{(2)}(y)$ with continuous functions $h_{(i)}$.
\end{lemma}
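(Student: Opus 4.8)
The plan is to read the statement as the assertion that the two-variable function $F(x,y)=(x+y)^{-2}$ has \emph{infinite rank} with respect to the decomposition into products of one-variable functions, and to rule out any finite decomposition by a rank/determinant argument. Suppose, for contradiction, that $F(x,y)=\sum_{i=1}^n h_{(1)}^i(x)\,h_{(2)}^i(y)$ for some $n\in\N$ and continuous $h_{(1)}^i,h_{(2)}^i\colon\R_+^*\to\R$. The first observation is that for \emph{any} choice of points $x_0,\dots,x_n$ and $y_0,\dots,y_n$ in $\R_+^*$, the $(n+1)\times(n+1)$ matrix $M_{jk}\vc F(x_j,y_k)$ factorizes as $M=H\,G^{\mathsf T}$, where $H_{ji}\vc h_{(1)}^i(x_j)$ and $G_{ki}\vc h_{(2)}^i(y_k)$ are of size $(n+1)\times n$. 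Hence $\operatorname{rank} M\le n$ and $\det M=0$.

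The heart of the argument is therefore to exhibit $n+1$ points in each variable for which this determinant is nonzero, contradicting the factorization. First I would fix any pairwise distinct $x_0,\dots,x_n>0$ and consider the functions $f_j\colon\R_+^*\to\R_+^*$, $f_j(y)\vc(x_j+y)^{-2}$. These are linearly independent: a relation $\sum_{j=0}^n c_j\,(x_j+y)^{-2}=0$ on $\R_+^*$ extends to an identity of rational functions on $\C$, whose principal part at the double pole $y=-x_j$ is $c_j\,(y+x_j)^{-2}$; since the poles are pairwise distinct, each $c_j$ must vanish. Given linear independence of the $n+1$ continuous functions $f_0,\dots,f_n$, a standard lemma produces points $y_0,\dots,y_n\in\R_+^*$ with $\det[f_j(y_k)]_{j,k}\neq0$: one builds them inductively, at each stage using a new evaluation point to separate a function in the span not yet annihilated, so that the functionals $\mathrm{ev}_{y_0},\dots,\mathrm{ev}_{y_n}$ are independent on $\operatorname{span}(f_0,\dots,f_n)$.

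With such points one has $\det M=\det[(x_j+y_k)^{-2}]\neq0$, contradicting $\det M=0$ forced by the finite product decomposition; hence no such decomposition exists. The main obstacle is precisely this middle step—certifying full rank by producing suitable evaluation points—and I expect the cleanest route to be the linear-independence-plus-evaluation lemma rather than a closed determinant formula. Alternatively, one may invoke the Cauchy double-alternant: $[(x_j+y_k)^{-2}]$ is the entrywise square of the Cauchy matrix $[(x_j+y_k)^{-1}]$, whose determinant $\prod_{j<l}(x_l-x_j)(y_l-y_j)\big/\prod_{j,k}(x_j+y_k)$ is manifestly nonzero, and a confluent (differentiated) version of this identity yields an explicit nonvanishing determinant for the squared matrix. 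Either way, it is the nonvanishing of this determinant that breaks the rank bound imposed by any finite sum of products, which proves the claim.
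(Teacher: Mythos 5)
Your proof is correct, but it takes a genuinely different route from the paper's. The paper first reduces the problem to the kernel $(x+y)^{-1}$ (multiplying the assumed decomposition by $x+y$), then chooses exactly $N$ points $x_i,y_j$ and invokes the explicit Cauchy determinant formula to make the matrix $c_{i,j}=(x_i+y_j)^{-1}$ invertible; writing $c=a\,{}^t b$ it concludes that the factors $h_{1,\ell},h_{2,\ell}$ would have to be finite linear combinations of the functions $(x+y_j)^{-1}$ and $(x_i+y)^{-1}$, and substituting back produces an identity that fails as $x,y\to 0^+$ (the left-hand side blows up while the right-hand side stays bounded). You instead attack $(x+y)^{-2}$ directly with a pure rank argument: an $n$-term decomposition forces every $(n+1)\times(n+1)$ evaluation matrix $[F(x_j,y_k)]$ to be singular, and you contradict this by producing points where it is nonsingular, using linear independence of the sections $y\mapsto(x_j+y)^{-2}$ (a clean principal-part-at-distinct-poles argument) together with the standard lemma that linearly independent functions admit evaluation points with nonvanishing determinant. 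Your route buys elementarity and generality: it needs no closed determinant formula and no preliminary reduction, and it shows at once that any kernel with infinitely many linearly independent sections admits no finite product decomposition; the paper's route buys explicitness, actually pinning down the form the factors would have to take, but it is tailored to the Cauchy kernel. One small caveat: your alternative ending via a ``confluent'' Cauchy identity is vaguer than your main argument --- the clean closed form for the squared matrix is Borchardt's identity $\det\big[(x_j+y_k)^{-2}\big]=\det\big[(x_j+y_k)^{-1}\big]\cdot\mathrm{per}\big[(x_j+y_k)^{-1}\big]$, nonzero for distinct positive points since the permanent has strictly positive entries --- but your main route stands on its own without it.
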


\begin{proof}
Multiplying by $x+y$, the decomposition $(x+y)^{-2} = \sum_{\text{finite}} h_{(1)}(x) h_{(2)}(y)$ implies $(x+y)^{-1} = \sum_{\text{finite}} (x h_{(1)}(x)) h_{(2)}(y) + \sum_{\text{finite}} h_{(1)}(x) (y h_{(2)}(y))$. So it is sufficient to show that  $(x+y)^{-1}$ cannot be decomposed as $\sum_{\text{finite}} h_{(1)}(x) h_{(2)}(y)$. Let us prove it by contradiction. 

Suppose we have such decomposition $(x+y)^{-1} = \sum_{\ell = 1}^N h_{1, \ell}(x) h_{2, \ell}(y)$ for $N \in \N^*$. Let $(x_i, y_i)_{1 \leq i \leq N}$ be $N$ points in $(\R^*_+)^2$ and consider the $N \times N$-matrix $c_{i,j} \vc (x_i + y_j)^{-1}$. Then\begin{equation*}
\det (c) =\big[\prod_{i,j=1}^N (x_i + y_j) \big]^{-1} \prod_{1 \leq i < j \leq N} (x_i - x_j)(y_i - y_j).
\end{equation*}
This expression shows that we can choose a family $(x_i, y_i)_{1 \leq i \leq N}$ such that $\det (c) \neq 0$. With such a family, define the two matrices $a_{i,\ell} \vc h_{1, \ell}(x_i)$ and $b_{i, \ell} \vc h_{2, \ell}(y_i)$. Then
\begin{align*}
c_{i,j} = (x_i+y_j)^{-1} & = \sum_{\ell = 1}^N h_{1, \ell}(x)\, h_{2, \ell}(y_j) = \sum_{\ell = 1}^N a_{i,\ell}\, b_{j, \ell}
\end{align*}
so that, in matrix notation, $c = a \cdot {}^t b$, which implies that $\det(a) \neq 0$ and $\det(b) \neq 0$.
From $(x+y_j)^{-1} = \sum_{\ell = 1}^N h_{1, \ell}(x)\, b_{j, \ell}\,$, we deduce $h_{1, \ell}(x) = \sum_{j} b^{-1}_{j, \ell} (x+y_j)^{-1}$ and, similarly, $h_{2, \ell}(y) = \sum_{i} a^{-1}_{i, \ell} (x_i+y)^{-1}$. This gives
\begin{equation*}
(x+y)^{-1} = \sum_{i,j,\ell} a^{-1}_{i, \ell} b^{-1}_{j, \ell} \,(x+y_j)^{-1} (x_i+y)^{-1}.
\end{equation*}
This expression must hold true on $(\R^*_+)^2$: when $x,y \to 0^+$, the LHS goes to $+\infty$ while the RHS remains bounded. This is a contradiction.
\end{proof}

{\it A remark about $\zeta_P(0)$}: 
When $r$ is increasing from $0$, the difference $d/2-r$ goes through zero when $d$ is even. Such a point, which appears in \eqref{heat-trace-asympt} to be independent of a dilation of $P$, is the splitting between the pole part of the heat-trace and a (eventually divergent) series in $t$. This splitting is especially interesting because, when $P$ is positive, $a_{d/2}$ is proportional to $\zeta_P(0)$ where $\zeta_P(s)\vc \Tr P^{-s}$ for $\Re(s)$ large enough (see \cite{Gilkeybook}). In that case, Proposition \ref{n<k} shows that for any even-dimensional $M$, $\zeta_P(0)$ contains terms in $\log(u)$ in a non-explicit formula when $u^{\mu\nu}$ is not a multiple of the identity matrix.

\subsection{\texorpdfstring{Explicit formulae for scalar symbol ($u(x)=f(x) \bbbone_N$)}{Explicit formulae for scalar symbol}}

When $u$ is central, the operator defined by $I_{\alpha,k}(r_0,\dots,r_k)$ is equivalent to the operator defined by 
\begin{align*}
\widetilde I_{\alpha,k}(r_0) \vc \lim_{r_j\to r_0} I_{\alpha,k}(r_0,\dots,r_k) = \tfrac{1}{k!}\, r_0^{-\alpha}.
\end{align*}
Thus \eqref{terme de base} reduces to 
\begin{align*}
\tfrac{1}{(2\pi)^d}\int d\xi \, \xi_{\mu_1}\cdots \xi_{\mu_{2p}} \,f_k(\xi)[\,\BB_k^{\mu_1\dots\mu_{2p}}]& =g_d\, m[u^{-d/2-p} \otimes G(g)_{\mu_1\dots \mu_{2p}} \,\BB_k^{\mu_1\dots \mu_{2p}}]\\
&= g_d\,G(g)_{\mu_1\dots \mu_{2p}} \,u^{-d/2-p}\, m[\BB_k^{\mu_1\dots \mu_{2p}}]
\end{align*}
so the tedious part of the computation is to list all arguments $\BB_k^{\mu_1\dots \mu_{2p}}$ and to contract them with $G(g)_{\mu_1\dots \mu_{2p}} $. This can be done with the help of a computer in any dimension for an arbitrary $r$. All formulae are obviously explicit. An eventual other step is to translate the results in terms of diffeomorphic and gauge invariants.

\subsection{Application to quantum field theory}
\label{AppliQFT}

Second-order differential operators which are on-minimal have a great importance in physics and have justified their related heat-trace coefficients computation (to quote but a few see almost all references given here). For instance in the interesting work \cite{MT,Toms}, the operators $P$ given in \eqref{def-P} are investigated under the restriction
\begin{align*}
u^{\mu\nu}=g^{\mu\nu} \,\bbbone+\zeta\,X^{\mu\nu},
\end{align*}
where $\zeta$ is a parameter (describing for $\zeta=0$ the minimal theory), under the constraints for the normalized symbol $\widehat{X}(\sigma) \vc X^{\mu\nu} \sigma_\mu\sigma_\nu$ with $\vert \sigma\vert_g^2=g^{\mu\nu} \sigma_\mu\sigma_\nu=1$ given by
\begin{align}
& \widehat{X}(\sigma)\,^2=\widehat{X}(\sigma), \text{ for any }\sigma \in S^1_g\,, \label{X2=X}\\
& \nabla_\rho\, X^{\mu\nu}=0.\label{parallel}
\end{align}
Here, $\nabla_\rho$ is a covariant derivative involving gauge and Christoffel connections. In covariant form, the operators are
$$
P=-(g^{\mu\nu}\,\nabla_\mu\nabla_\nu + \zeta \,X^{\mu\nu}\,\nabla_\mu\nabla_\nu +Y).
$$
Despite the restrictions \eqref{X2=X}-\eqref{parallel} meaning that the operator $\widehat{X}$ is a projector and the tensor-endomorphism $u$ is parallel, this covers an operator describing a quantized spin-1 vector field
$$
{P^\mu}_\nu =-({\delta^\mu}_\nu \,\nabla^2+\zeta \,\nabla^\mu\nabla_\nu + {Y^\mu}_\nu),
$$
or a Yang--Mills field
$$
{P^\mu}_\nu =-({\delta^\mu}_\nu \,D^2 -\zeta\, D^\mu D_\nu +{R^\mu}_\nu -2 {F^\mu}_\nu)
$$
where $D_\mu\vc \nabla_\nu + A_\mu$ and $A_\mu,\,F_{\mu\nu}$ are respectively the gauge and strength fields, \\
or a perturbative gravity (see \cite{MT} for details). 

Remark first that 
\begin{align*}
& H(x,\xi)=u^{\mu\nu} \xi_\mu \xi_\nu= \vert \xi \vert_g^2\,[1+\zeta\, \widehat{X}\,(\xi/ \vert \xi \vert_g)], \text{ so that } \\
& e^{-H(x,\xi)} = [e^{-(1+\zeta) \vert \xi \vert_g^2}-e^{-\vert \xi \vert_g^2}]\, \widehat{X} + e^{-\vert \xi \vert_g^2} \,\bbbone_V.
\end{align*}
Thus \eqref{a0(x)} becomes
\begin{align*}
a_0(x)& = \tfrac{1}{(2\pi)^d} \int d\xi\,[e^{-(1+\zeta )\vert \xi \vert_g^2}-e^{-\vert \xi \vert_g^2}]\, \tr \widehat{X} + \tfrac{1}{(2\pi)^d} \int e^{-\vert \xi \vert_g^2} \tr \,\bbbone_V \\
& =  [ \int_{\sigma \in S^1_g} d\Omega(\sigma) \,\tr \widehat{X}(\sigma)]\,  [\tfrac{1}{(2\pi)^d}\int_0^\infty dr\,r^{d-1}\,(e^{-(1+\zeta) r^2}-e^{-r^2})] + g_d\,N \\
&= \tfrac{\Gamma(d/2)}{2(2\pi)^d}\,[(1+\zeta)^{-d/2} -1]\,[ \int_{\sigma \in S^1_g} d\Omega(\sigma) \,\tr \widehat{X}(\sigma)]+ g_d\,N .
\end{align*}
One has $g_d \vc \tfrac{1}{(2\pi)^d} \int_{\R^d} d\xi\, e^{-\abs{\xi}_{g(x)}^2}= \tfrac{\abs{g}^{1/2}}{2^{d}\,\pi^{d/2}}$ and
\begin{align*}
\int_{\sigma \in S^1_g} d\Omega(\sigma) \,\tr \widehat{X}(\sigma) &=
\tr(X^{\mu\nu})\,  \int_{\sigma \in S^1_g} d\Omega(\sigma) \, \sigma_\mu \sigma_\nu
\end{align*}
Using \eqref{G(g)}, we can get
\begin{align*}
\int_{\sigma \in S^1_g} d\Omega(\sigma) \, \sigma_{\mu_1} \cdots \sigma_{\mu_{2p}} = \tfrac{2 \, (2 \pi)^d \, g_d}{\Gamma(d/2 + p)} \, G(g)_{\mu_1 \dots \mu_{2p}},
\end{align*}
so that we recover \cite[(2.34)]{MT}:
\begin{align*}
a_0(x) = g_d \left[  N + d^{-1}\, g_{\mu\nu} \tr(X^{\mu\nu}) \big( (1+\zeta)^{-d/2} -1 \big)\right].
\end{align*}

Now, let us consider the more general case 
\begin{align}
\label{u+X}
u^{\mu\nu} = g^{\mu\nu}u + \zeta \,X^{\mu\nu} 
\end{align}
where $u$ is a strictly positive matrix $u(x) \in M_N$ as in Section~\ref{Section-example}, $X^{\mu\nu}$ as before and assume $[u(x), X^{\mu\nu}(x)] = 0$ for any $x \in (M,g)$ and $\mu, \nu$. Previous situation was $u = \bbbone_V$, the unit matrix in $M_N$. Once Lemma~\ref{fk avec derivee} has been applied, the difficulty to compute $a_r(x)$ is to evaluate the operators  $T_{k,p}$ defined by \eqref{Tkp}. Here we have $u[\sigma] = u + \zeta \,\widehat{X}(\sigma)$, where the two terms commute. With the notation $$\widehat{X}_i \vc R_i[\widehat{X}(\sigma)],$$
we have
\begin{align*}
C_k(s,u[\sigma]) &= (1-s_1)R_0[u + \zeta \,\widehat{X}(\sigma)] + (s_1-s_2)R_1[u + \zeta \,\widehat{X}(\sigma)] + \dots \\
& \hspace{2cm} \dots + (s_{k-1} - s_k)R_{k-1}[u + \zeta\, \widehat{X}(\sigma)] + s_k R_k[u + \zeta\, \widehat{X}(\sigma)] \\
& = C_k(s,u) + \zeta \big[ (1-s_1) \widehat{X}_0 + (s_1-s_2) \widehat{X}_1 + \dots + (s_{k-1} - s_k) \widehat{X}_{k-1} + s_k \widehat{X}_k \big]
\end{align*}
so that, using the fact that each $\widehat{X}_i$ is a projection with eigenvalues $\epsilon_i = 0,1$:
\begin{align*}
e^{-r^2 C_k(s,u[\sigma])} &=
e^{-r^2 C_k(s,u)} \sum_{(\epsilon_i) \in \{0,1\}^{k+1}} e^{-\zeta r^2 [ (1-s_1) \epsilon_0 + (s_1-s_2) \epsilon_1 + \dots + (s_{k-1} - s_k) \epsilon_{k-1} + s_k \epsilon_k ]} \times \\
&\hspace{4cm} 
\times {\widehat{X}_0}^{\epsilon_0} (1-\widehat{X}_0)^{1-\epsilon_0} \cdots {\widehat{X}_k}^{\epsilon_k} (1-\widehat{X}_k)^{1-\epsilon_k}.
\end{align*}
Notice that $\widehat{X}_i^{\epsilon_i} (1-\widehat{X}_i)^{1-\epsilon_i} = [(1-\epsilon_i) g^{\mu\nu} + (2\epsilon_i - 1) R_i(X^{\mu\nu}) ] \sigma_\mu \sigma_\nu$.
\\
With the definition
\begin{align*}
I_{\alpha,k}^{(\epsilon_i),\, \zeta}(r_0,r_1,\dots, r_k) \vc I_{\alpha,k}(r_0 + \zeta \epsilon_0 ,r_1 + \zeta \epsilon_1, \dots, r_k + \zeta \epsilon_k),
\end{align*}
the operators $T_{k,p}$ of \eqref{Tkp versus Ink} become
\begin{align}
T_{k,p}= &\tfrac{\Gamma(d/2+p)}{2(2\pi)^d} \int_{S^{d-1}_g} d\Omega_g(\sigma)\,\sigma_{\mu_1}\cdots \sigma_{\mu_{2p}} \sigma_{\alpha_0} \sigma_{\beta_0} \cdots  \sigma_{\alpha_k} \sigma_{\beta_k}  \times \nonumber
\\
& \sum_{(\epsilon_i) \in \{0,1\}^{k+1}} \, I_{d/2+p,k}^{(\epsilon_i), \,\zeta} \big(R_0(u),R_1(u),\dots,R_k(u)\big) \times \nonumber
\\
& \times [(1-\epsilon_0) g^{\alpha_0\beta_0} + (2\epsilon_0 - 1) R_0(X^{\alpha_0\beta_0}) ] \cdots [(1-\epsilon_k) g^{\alpha_k\beta_k} + (2\epsilon_k - 1) R_k(X^{\alpha_k\beta_k}) ]. \label{Tkpadapte}
\end{align}
The computations of these operators are doable using the methods given in Section~\ref{Section-example}, but with some more complicated combinatorial expressions requiring a computer. For $a_1(x)$ in $d=4$ we will still get explicit formulae. The main combinatorial computation is to make the contractions between $G(g)_{\mu_1\dots \mu_{2p}\alpha_0\beta_0\dots \alpha_k\beta_k}$ from the first line of \eqref{Tkpadapte} with the operators of the last line applied on variables $\BB_k^{\mu_1\dots \mu_{2p}}$.\\
When $u=\bbbone_V$ in \eqref{u+X}, the operators in the second line of \eqref{Tkpadapte} are just the multiplication by the numbers $I_{d/2+p,k}(1 + \zeta \epsilon_0 ,1 + \zeta \epsilon_1, \dots, 1 + \zeta \epsilon_k)$. Thus one gets explicit formulae for all coefficients $a_r(x)$ in any dimension.

\section{Conclusion}

On the search of heat-trace coefficients  for Laplace type operators $P$ with non-scalar symbol, we develop, using functional calculus, a method where we compute some operators $T_{k,p}$ acting on some (finite dimensional) Hilbert space and the arguments on which there are applied. This splitting allows to get general formulae for these operators and so, after a pure computational machinery will yield all coefficients $a_r$ since there is no obstructions other than the length of calculations. The method is exemplified when the principal symbol of $P$ has the form $g^{\mu\nu} u$ where $u$ is a positive invertible matrix. It gives $a_1$ in dimension 4 which is written both in terms of ingredients of $P$ (analytic approach) or of diffeomorphic and gauge invariants (geometric approach). As just said, the method is yet ready for a computation of $a_r$ with $r\geq 2$ for calculators patient enough, as well for the case $g^{\mu\nu}u + \zeta \,X^{\mu\nu}$ as in Section \ref{AppliQFT}. Finally, the method answers a natural question about explicit expressions for all coefficients $a_r$. When the dimension is odd we saw that $u$-factorizability is always violated and is preserved in even dimension $d$ only when $d/2-r>0$. Thus we conjecture that for any dimension $d$, there exists coefficients $a_r$ which have no explicit “$u$-factorizable” formulae when $u^{\mu\nu} = g^{\mu\nu} u$. In particular, in dimension 4, the problem appears already with the computation of $a_2$ despite the nice formulae \eqref{valeur de a1} or \eqref{valeur de a1 en u pmu q} obtained for $a_1$.

\appendix
\section{Appendix}

\subsection{Some algebraic results}

Let $A$ be a unital associative algebra over $\C$, with unit $\bbbone$. \\
Denote by $(C^*(A,A) = \oplus_{k \geq 0} C^k(A,A), \delta)$ the Hochschild complex where $C^k(A,A)$ is the space of linear maps $\omega : A^{\otimes^k} \to A$ and
\begin{multline*}
(\delta \omega) [b_0 \otimes \cdots \otimes b_k] = b_0 \omega[b_1 \otimes \cdots \otimes b_k]
+ \sum_{i=0}^k (-1)^i \omega[b_0 \otimes \cdots \otimes b_{i-1} \otimes b_{i+1} \otimes \cdots \otimes b_k]\\
+ (-1)^{k+1} \omega[b_0 \otimes \cdots \otimes b_{k-1}] b_k
\end{multline*}
for any $\omega \in C^k(A,A)$ and $b_0 \otimes \cdots \otimes b_k \in A^{\otimes^k}$.  

Define the differential complex $(\mathfrak{T}^* A = \oplus_{k \geq 0} A^{\otimes^{k+1}}, d)$ with 
\begin{multline*}
d (a_0 \otimes \cdots \otimes a_k) = \bbbone \otimes a_0 \otimes \cdots \otimes a_k
+ \sum_{i=0}^k (-1)^i a_0 \otimes \cdots \otimes a_{i-1} \otimes \bbbone \otimes a_{i+1} \otimes \cdots \otimes a_k\\
+ (-1)^{k+1} a_0 \otimes \cdots \otimes a_k \otimes \bbbone 
\end{multline*}
for any $a_0 \otimes \cdots \otimes a_k \in A^{\otimes^{k+1}}$. Both $C^*(A,A)$ and $\mathfrak{T}^* A$ are graded differential algebras, the first one for the product
\begin{equation*}
(\omega \,\omega')[b_1 \otimes \cdots \otimes b_{k+k'}] = \omega[b_1 \otimes \cdots \otimes b_{k}] \,\omega'[b_{k+1} \otimes \cdots \otimes b_{k+k'}]
\end{equation*}
and the second one for 
\begin{equation*}
(a_0 \otimes \cdots \otimes a_k)(a'_0 \otimes \cdots \otimes a'_{k'})
= a_0 \otimes \cdots \otimes a_k a'_0 \otimes \cdots \otimes a'_{k'} \in A^{\otimes^{k + k'+1}} = \mathfrak{T}^{k+k'} A.
\end{equation*}
The following result was proved in \cite{Mass95}:

\begin{proposition}
\label{prop-iso}
The map $\iota : \mathfrak{T}^* A \to C^*(A,A)$ defined by
\begin{equation*}
\iota(a_0 \otimes \cdots \otimes a_k)[b_1 \otimes \cdots \otimes b_k] \vc a_0 b_1 a_1 b_2 \cdots b_k a_k
\end{equation*}
is a morphism of graded differential algebras.

If $A$ is central simple, then $\iota$ is injective, and if $A = M_N$ (algebra of $N \times N$ matrices) then $\iota$ is an isomorphism.
\end{proposition}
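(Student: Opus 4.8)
\emph{Morphism of graded differential algebras.} First I would dispatch the two easy properties and isolate the one real computation. The grading is respected by construction, since $\iota$ sends $\mathfrak{T}^k A = A^{\otimes^{k+1}}$ into $C^k(A,A)$, and the unit is preserved because $\iota(\bbbone)=\bbbone$ in $C^0(A,A)=A$. Multiplicativity is a one-line check: for $a=a_0\otimes\cdots\otimes a_k$ and $a'=a'_0\otimes\cdots\otimes a'_{k'}$,
\begin{align*}
\iota(a\,a')[b_1\otimes\cdots\otimes b_{k+k'}] &= a_0 b_1\cdots b_k(a_k a'_0)b_{k+1}\cdots b_{k+k'}a'_{k'}\\
&= \big(\iota(a)[b_1\otimes\cdots\otimes b_k]\big)\big(\iota(a')[b_{k+1}\otimes\cdots\otimes b_{k+k'}]\big),
\end{align*}
which is exactly $(\iota(a)\,\iota(a'))[b_1\otimes\cdots\otimes b_{k+k'}]$. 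The only substantive point is $\iota\circ d=\delta\circ\iota$, which I would prove by a termwise comparison: since $\iota$ interleaves the arguments $b_j$ between the entries $a_i$, inserting a unit $\bbbone$ at an interior slot of $d(a_0\otimes\cdots\otimes a_k)$ reproduces, after evaluation on the $b_j$'s, the corresponding interior face term of $\delta$, while the two extreme unit insertions give the boundary terms $b_0\,\omega[\cdot]$ and $\omega[\cdot]\,b_k$. Matching the signs $(-1)^i$ slot by slot finishes it; this is pure bookkeeping and is the step most likely to hide a sign error.

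\emph{Injectivity for $A$ central simple.} Write $\iota_k$ for the degree-$k$ component of $\iota$. The genuine input is the degree-one case: $\iota_1:A^{\otimes^2}\to\End(A)$, $\iota_1(a_0\otimes a_1)[b]=a_0\,b\,a_1$, is precisely the canonical map $A\otimes A^{\mathrm{op}}\to\End_{\C}(A)$, which is an isomorphism for a (finite-dimensional) central simple $A$; this classical fact is the only place the hypothesis on $A$ enters. I would then prove injectivity of $\iota_k$ in every degree by induction on $k$ (the cases $k=0,1$ being clear). For the inductive step, group $A^{\otimes^{k+1}}=A^{\otimes^{k}}\otimes A$, curry the last argument to identify $C^k(A,A)=\operatorname{Hom}(A^{\otimes^{k}},A)$ with linear maps $A^{\otimes^{k-1}}\to\End(A)$, and observe that under this identification $\iota_k$ factors as
\begin{equation*}
A^{\otimes^{k}}\otimes A \xrightarrow{\ \iota_{k-1}\otimes\,\mathrm{id}\ } \operatorname{Hom}(A^{\otimes^{k-1}},A)\otimes A \hookrightarrow \operatorname{Hom}(A^{\otimes^{k-1}},A\otimes A) \xrightarrow{\ (\iota_1)_\ast\ } \operatorname{Hom}(A^{\otimes^{k-1}},\End(A)),
\end{equation*}
where $(\iota_1)_\ast$ is post-composition with $\iota_1$. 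Each arrow is injective: $\iota_{k-1}\otimes\mathrm{id}$ because tensoring an injection over a field with $\mathrm{id}_A$ stays injective, the middle inclusion because $A$ is finite-dimensional (there it is even an isomorphism), and $(\iota_1)_\ast$ because $\iota_1$ is injective. Hence $\iota_k$ is injective.

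\emph{Isomorphism for $A=M_N$.} Since $M_N(\C)$ is central simple, $\iota_k$ is already injective by the previous step, so I would only need a dimension count. One has $\dim_{\C}A^{\otimes^{k+1}}=(N^2)^{k+1}$ and $\dim_{\C}C^k(A,A)=(\dim_{\C}A)^k\cdot\dim_{\C}A=(N^2)^{k+1}$ as well; an injective linear map between equidimensional finite-dimensional spaces is bijective, so $\iota_k$ is an isomorphism in each degree and $\iota$ is an isomorphism of graded differential algebras.

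In this plan the formal scaffolding (grading, multiplicativity, the inductive reduction, the dimension count) is routine; the two places demanding care are the differential compatibility $\iota d=\delta\iota$, where the index shifts and signs must be matched exactly, and the degree-one isomorphism $A\otimes A^{\mathrm{op}}\cong\End(A)$, which is the one nonformal ingredient and the sole user of central simplicity. I would therefore expect the sign bookkeeping in the differential to be the main practical obstacle, with everything else following mechanically once the $k=1$ case is in hand.
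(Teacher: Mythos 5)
Your proof is correct for the case it covers, but the injectivity argument takes a genuinely different route from the paper's, and the difference matters for generality. The paper does not induct on degree: it observes that $J_k \vc \Ker\iota\cap A^{\otimes^{k+1}}$ is a two-sided ideal of $A^{\otimes^{k+1}}$ equipped with the \emph{componentwise} product, an algebra which is simple by Lemma~\ref{lemma-centralsimple}, and since $\iota(\bbbone\otimes\cdots\otimes\bbbone)\neq 0$ it concludes $J_k=0$ in every degree at once. You instead curry off the last tensor factor and induct, resting the whole argument on the classical isomorphism $A\otimes A^{\mathrm{op}}\simeq\End_{\C}(A)$; your factorization of $\iota_k$ through $\operatorname{Hom}(A^{\otimes^{k-1}},A)\otimes A\hookrightarrow\operatorname{Hom}(A^{\otimes^{k-1}},A\otimes A)$ is correct, and each arrow is indeed injective. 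The trade-off is this: the classical theorem you invoke holds only for \emph{finite-dimensional} central simple algebras (as you flag yourself), so your argument proves the statement for $A=M_N$ --- which is all the paper ever uses --- but not for a general, possibly infinite-dimensional, central simple $A$, which the paper's ideal argument handles at no extra cost. The fix is small: all you actually need is injectivity of $\iota_1$, which in any dimension follows from simplicity of $A\otimes A^{\mathrm{op}}$ plus $\iota_1\neq 0$ --- i.e.\ precisely the paper's mechanism --- and the middle inclusion above is injective over any field without finiteness, so your induction would then go through in full generality; as it stands, though, you have outsourced the hardest case ($k=1$) to a textbook theorem whose standard proof is the same kind of simplicity reasoning, so the paper's route is the more economical and the more general one. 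The remaining parts are essentially identical in both treatments: the morphism property is a routine combinatorial check (your matching of unit insertions in $d$ against the face terms of $\delta$, and the one-line multiplicativity computation, are exactly what is needed), and the isomorphism for $M_N$ is injectivity plus the equality $\dim A^{\otimes^{k+1}}=\dim C^k(A,A)=N^{2(k+1)}$ in both proofs.
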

Recall that an associative algebra $A$ is central simple if it is simple and its center is $\C$. Central simple algebras have the following properties, proved for instance in \cite{Lam91a}:

\begin{lemma}
\label{lemma-centralsimple}
If $B$ is a central simple algebra and $C$ is a simple algebra, then $B\otimes C$ is a simple algebra. If moreover $C$ is central simple, then $B \otimes C$ is also central simple.
\end{lemma}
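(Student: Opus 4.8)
The plan is to treat the two assertions separately, proving first that $B \otimes C$ is simple and then, under the extra hypothesis, computing its center. Throughout, the tensor product is over $\C$, and I would exploit $\C$-linear independence of tensor factors together with the defining properties (simplicity of both, and centrality of $B$). The guiding idea is that an arbitrary element of a nonzero ideal can be normalized, using the structure of $B$, to the especially clean shape $\bbbone \otimes c$, after which simplicity of $C$ finishes the argument at once.

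For simplicity I would let $I$ be a nonzero two-sided ideal of $B \otimes C$ and pick a nonzero element $x = \sum_{i=1}^n b_i \otimes c_i \in I$ whose number of terms $n$ is minimal among all nonzero elements of $I$; minimality forces the $c_i$ to be $\C$-linearly independent and every $b_i$ to be nonzero. Since $B$ is simple, the two-sided ideal it generates from $b_1$ is all of $B$, so $\sum_j u_j b_1 v_j = \bbbone$ for suitable $u_j, v_j \in B$; replacing $x$ by $\sum_j (u_j \otimes \bbbone)\, x\, (v_j \otimes \bbbone) \in I$, I may assume $b_1 = \bbbone$ while keeping the element nonzero with at most $n$ terms. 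Then for every $b \in B$ the element $(b \otimes \bbbone)x - x(b \otimes \bbbone) = \sum_{i \geq 2}(b b_i - b_i b) \otimes c_i$ lies in $I$ and has strictly fewer than $n$ terms, hence vanishes by minimality; linear independence of the $c_i$ gives $b b_i = b_i b$ for all $b$, so each $b_i$ lies in $Z(B) = \C\bbbone$ because $B$ is central. Absorbing these scalars into the $c_i$ shows $x = \bbbone \otimes c$ for a nonzero $c \in C$, and then simplicity of $C$ yields $CcC = C$, so $\bbbone \otimes \bbbone \in (\bbbone \otimes C)(\bbbone \otimes c)(\bbbone \otimes C) \subseteq I$ and therefore $I = B \otimes C$.

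For the second assertion I would compute the center directly. Given $z \in Z(B \otimes C)$, I write $z = \sum_i b_i \otimes c_i$ with the $c_i$ linearly independent; commuting $z$ with every $b \otimes \bbbone$ and invoking independence forces $b_i \in Z(B) = \C\bbbone$, so $z = \bbbone \otimes c$ with $c = \sum_i b_i c_i$. Commuting now with every $\bbbone \otimes c'$ gives $c \in Z(C) = \C\bbbone$, whence $z \in \C(\bbbone \otimes \bbbone)$, so $Z(B \otimes C) = \C$; combined with the simplicity already established, $B \otimes C$ is central simple. I expect the main obstacle to be the first part: the delicate point is the minimal-length selection, where the centrality of $B$ is exactly what permits concluding that the reduced coefficients are scalars and hence passing to the normal form $\bbbone \otimes c$. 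Once that form is reached, simplicity of $C$ closes the argument immediately, and the center computation is a short variant of the same linear-independence reasoning.
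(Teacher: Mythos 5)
Your proof is correct and complete. Note that the paper gives no proof of this lemma at all --- it simply cites Lam's textbook --- and your argument (pick a nonzero element of the ideal of minimal tensor length with linearly independent $c_i$, use simplicity of $B$ to normalize the first coefficient to $\bbbone$, use commutators with $b\otimes\bbbone$ together with minimality and centrality of $B$ to force the remaining coefficients into $\C\bbbone$, reduce to the form $\bbbone\otimes c$ and finish by simplicity of $C$, then run the same linear-independence computation for the center) is precisely the classical proof found in that reference, so there is nothing to correct; the only points worth stating explicitly are that the normalized element is nonzero because its first coefficient is $\bbbone$ and the $c_i$ stay independent, and that in the center computation the $b_i$ are scalars $\lambda_i$ so $z=\bbbone\otimes\sum_i\lambda_i c_i$, both of which your write-up already handles.
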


\begin{proof}[of Proposition~\ref{prop-iso}]
A pure combinatorial argument shows that $\iota$ is a morphism of graded differential algebras for the structures given above.

Assume that $A$ is central simple. The space $A^{\otimes^{k+1}}$ is an associative algebra for the product $(a_0 \otimes \cdots \otimes a_k)\cdot(a'_0 \otimes \cdots \otimes a'_k) = a_0 a'_0 \otimes \cdots \otimes a_k a'_k$, which is central simple by Lemma~\ref{lemma-centralsimple}. Let $J_k= \Ker \iota \cap A^{\otimes^{k+1}}$. Then, for any $\alpha = \sum_i a_{0,i} \otimes \cdots \otimes a_{k,i} \in J_k$, any $\beta = b_0 \otimes \cdots \otimes b_k \in A^{\otimes^{k+1}}$, and any $c_1 \otimes \cdots \otimes c_k \in A^{\otimes^{k}}$, one has
\begin{align*}
\iota(\alpha \cdot \beta)[c_1 \otimes \cdots \otimes c_k]
&= \sum_i a_{0,i} b_0 c_1 a_{1,i} b_1 c_2 \cdots b_{k-1} c_k a_{k,i} b_k = \iota(\alpha)[b_0 c_1 \otimes \cdots \otimes b_{k-1} c_k] b_k = 0,
\end{align*}
so that $\alpha\cdot \beta \in J_k$. The same argument on the left shows that $J_k$ is a two-sided ideal of the algebra $A^{\otimes^{k+1}}$, which is simple. Since $\iota$ is non zero ($\iota(\bbbone \otimes \cdots \otimes \bbbone) \neq 0$), one must have $J_k = 0$: this proves that $\iota$ is injective.

The algebra $A = M_N$ is central simple \cite{Lam91a}, so that $\iota$ is injective, and moreover the spaces $C^k(A,A)$ and $A^{\otimes^{k+1}}$ have the same dimensions: this shows that $\iota$ is an isomorphism.
\end{proof}

\begin{remark}
In \cite{Mass95}, the graded differential algebras $C^*(A,A)$ and $\mathfrak{T}^* A$ are equipped with a natural Cartan operations of the Lie algebra $A$ (where the bracket is the commutator) and it is shown that $\iota$ intertwines these Cartan operations.
\end{remark}

\subsection{Some combinatorial results}

\begin{lemma}
\label{Vandermonde}
Given a family $a_0,\dots,a_r$ of different complex numbers, we have
\begin{align}
&i)\quad \sum_{n=0}^r \, a_n^s\,\prod^r_{m=0,\,m\neq n} (a_n-a_m)^{-1} = 0,\quad \text{for any } s \in \{0,1,\dots,r-1\},\label{eq-appendix 1}\\
&ii) \quad  \prod_{m=0}^r (z-a_m)^{-1} =\sum_{n=0}^r \,(z-a_n)^{-1} \prod_{m=0,\,m\neq n}^r (a_n-a_m)^{-1}, \quad \forall z \in \C\backslash \{a_0,\dots,a_r\} . \label{eq-appendix 2}
\end{align}
\end{lemma}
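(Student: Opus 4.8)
The plan is to prove (ii) first, recognizing it as the partial fraction decomposition of a proper rational function, and then to deduce (i) from (ii) by comparing the asymptotic expansions of both sides as $z \to \infty$.

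For (ii), I would consider the rational function $R(z) \vc \prod_{m=0}^r (z-a_m)^{-1}$. Since the $a_m$ are pairwise distinct, the denominator $\prod_{m=0}^r (z-a_m)$ has $r+1$ simple roots, so $R$ has only simple poles, located at the points $a_n$. As the numerator is constant (degree $0$) while the denominator has degree $r+1$, the function $R$ is proper and vanishes at infinity, hence equals the sum of its principal parts. The residue at $z = a_n$ is
\begin{align*}
\Res_{z=a_n} R(z) = \lim_{z\to a_n} (z-a_n)\prod_{m=0}^r (z-a_m)^{-1} = \prod_{\substack{m=0\\ m\neq n}}^r (a_n-a_m)^{-1},
\end{align*}
which yields precisely (ii).

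To obtain (i), I would expand both sides of (ii) in powers of $1/z$ near $z = \infty$. The left-hand side equals $z^{-(r+1)}\prod_{m=0}^r (1-a_m/z)^{-1}$, so its Laurent expansion begins at order $z^{-(r+1)}$; in particular the coefficients of $z^{-(s+1)}$ vanish for $0 \le s \le r-1$. On the right-hand side, expanding each simple fraction as $(z-a_n)^{-1} = \sum_{s\ge 0} a_n^s\, z^{-(s+1)}$ gives
\begin{align*}
\sum_{n=0}^r (z-a_n)^{-1}\prod_{\substack{m=0\\ m\neq n}}^r (a_n-a_m)^{-1} = \sum_{s\ge 0} \Big(\sum_{n=0}^r a_n^s \prod_{\substack{m=0\\ m\neq n}}^r (a_n-a_m)^{-1}\Big) z^{-(s+1)}.
\end{align*}
Matching the coefficient of $z^{-(s+1)}$ for each $s \in \{0,1,\dots,r-1\}$ forces the inner sum to vanish, which is exactly (i). As a byproduct, the coefficient of $z^{-(r+1)}$ shows that the same sum equals $1$ for $s=r$.

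The computations are entirely routine; the only points requiring care are that distinctness of the $a_m$ guarantees simple poles (so the partial fraction expansion takes the stated form) and that the asymptotic matching is legitimate because both expansions converge for $\abs{z}$ large. An alternative, self-contained route to (i) is a direct residue argument: integrating $z^s\prod_{m=0}^r(z-a_m)^{-1}$ over a large circle enclosing all the $a_m$, the sum of residues equals the contour integral, which vanishes since the integrand decays like $z^{s-r-1}$ with $s-r-1 \le -2$. I would present the expansion-of-(ii) approach as the main line, since it reuses (ii) directly and treats both identities within a single computation.
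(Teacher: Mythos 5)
Your proof is correct, and for part (ii) it coincides with the paper's: both compute the partial fraction (residue) decomposition of $f(z)=\prod_{m=0}^r(z-a_m)^{-1}$, whose poles are simple precisely because the $a_m$ are distinct. Where you genuinely differ is part (i). The paper proves (i) directly and purely algebraically: multiplying $\alpha(s)=\sum_{n=0}^r a_n^s\prod_{m\neq n}(a_n-a_m)^{-1}$ by the Vandermonde product $\prod_{0\leq l<k\leq r}(a_k-a_l)$ identifies it, up to expansion along the last row, with the determinant of the $(r+1)\times(r+1)$ matrix whose rows are $(a_0^j,\dots,a_r^j)$ for $j=0,\dots,r-1$ together with the row $(a_0^s,\dots,a_r^s)$; for $s\in\{0,\dots,r-1\}$ this matrix has two equal rows, so the determinant vanishes. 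You instead deduce (i) from (ii) by matching Laurent coefficients at infinity: the left-hand side of (ii) is $O(z^{-(r+1)})$, while each $(z-a_n)^{-1}$ on the right contributes $\sum_{s\geq 0}a_n^s z^{-(s+1)}$, so the coefficients of $z^{-(s+1)}$ for $0\leq s\leq r-1$ must vanish. Both arguments are sound. The paper's determinant route is purely algebraic (no convergence or uniqueness-of-expansion input, valid over any field) and explains the lemma's name; your route is more economical in that it derives both identities from a single computation, needs only the uniqueness of expansions at infinity (legitimate here since both series converge for $\abs{z}>\max_m\abs{a_m}$), and yields as a bonus that the sum in (i) equals $1$ for $s=r$. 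Your alternative contour-integral argument for (i) (the sum of residues of $z^s f(z)$ vanishes because the integrand is $O(z^{s-r-1})$ with $s-r-1\leq -2$) is also valid and is perhaps the cleanest self-contained version.
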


\begin{proof}
i) If $\alpha(s) \vc \sum_{n=0}^r a_n^s\,\prod^r_{m=0,\,m\neq n} (a_n-a_m)^{-1}$ and 
$$
\beta(s)\vc \alpha(s) \prod_{0\leq l<k\leq r}(a_k-a_l) = \sum_{n=0}^r (-1)^{r-n}\,a_n^s \,\prod^r_{\substack{0\leq l<k\leq r\\ k \neq n,\, l\neq n}} (a_k-a_l)
$$
then it is sufficient to show that $\beta(s)=0$ for $s=0,\dots,r-1$. Recall first that the determinant of a Vandermonde $p\times p$-matrix is
\begin{align*}
\det \left(\begin{array}{cccc}
1&1&\cdots&1\\
b_1&b_2&\cdots &b_p\\
b_1^2&b_2^2&\cdots &b_p^2 \\
\vdots&\vdots&\cdots&\vdots\\
b_1^{p-1}&b_2^{p-1} &\cdots& b_p^{p-1} 
\end{array} \right)
= \prod_{1\leq j <i\leq p}(b_i-b_j).
\end{align*}
Thus
\begin{align*}
\det \left(\begin{array}{cccc}
1&1&\cdots&1\\
a_0&a_1&\cdots &a_r\\
a_0^2&a_1^2&\cdots &a_r^2 \\
\vdots&\vdots&\cdots&\vdots\\
a_0^{r-1}&a_1^{r-1} &\cdots& a_r^{r-1} \\
a_0^s&a_1^s&\cdots & a_r^s
\end{array} \right)
=\beta(s)
\end{align*}
after an expansion of the determinant with respect to the last line. 
But this is zero since the last line coincides with the line $s+1$ of the matrix when $s=0,\dots,r-1$.

ii) The irreducible fraction expansion of $f(z)=\tfrac{1}{(z-a_0)(z-a_1)\cdots(z-a_r)}$ is $\sum_{n=0}^r \tfrac{ \text{Res} (f)(a_n)}{z-a_n}$ yielding \eqref{eq-appendix 2}.
\end{proof}

\subsection{\texorpdfstring{Diffeomorphism invariance and gauge covariance of $P$}{Diffeomorphism invariance and gauge covariance of P}}
\label{subsection-diff-gauge-P}

The operator $P$ in \eqref{def-P} is given in terms of $u^{\mu\nu},v^\mu,w$, which are not well adapted to study changes of coordinates and gauge transformations. 

Given a change of coordinates (c.c.) $x\tocc x'$, let $J_\mu^\nu \vc \pmu x'^\mu$ and $\abs{J}\vc \det (J_\mu^\nu)$, so that 
\begin{align*}
\pmu &\tocc {J^{-1}}^\alpha_\mu \partial_\alpha,
&
g_{\mu\nu} &\tocc {J^{-1}}_\mu^\rho {J^{-1}}_\nu^\sigma g_{\rho \sigma},
&
g^{\mu\nu} &\tocc {J}^\mu_\rho {J}^\nu_\sigma g^{\rho \sigma},
&
\abs{g} &\tocc \abs{J}^{-2} \abs{g}
\end{align*}
where $\abs{g} = \det(g_{\mu\nu})$. Denote by $\gamma$ a gauge transformation, local in the trivialization of $V$ in which $P$ is written (\textsl{i.e.} $\gamma$ is a map from the open subset of $M$ which trivializes $V$ to the gauge group $GL_N(\C)$). A section $s$ of $V$ performs the transformations $s \tocc s$ (diffeomorphism-invariant) and $s \togt \gamma s$ (where  “g.t.” stands for “gauge transformation”). \\The proof of the following lemma is a straightforward computation:

\begin{lemma}
\label{lem-u vmu w cc gt}
The differential operator $P$ is well defined on sections of $V$ if and only if $u^{\mu\nu}$, $v^\mu$ and $w$ have the following transformations:
\begin{align}
u^{\mu\nu} &\tocc J_\rho^\mu J_\sigma^\nu u^{\rho\sigma}, 
& 
u^{\mu\nu} &\togt \gamma u^{\mu\nu}\gamma^{-1}, 
\\
v^\mu &\tocc J_\rho^\mu v^\rho + (\psigma J_\rho^\mu) u^{\rho\sigma}, 
& 
v^\mu &\togt \gamma v^\mu \gamma^{-1} +2 \gamma u^{\mu\nu} (\pnu \gamma^{-1}), 
\label{eq-vmu cc gt}
\\
w &\tocc w,
 &
 w &\togt \gamma w \gamma^{-1} + \gamma v^\mu(\pmu \gamma^{-1}) +\gamma u^{\mu\nu} (\pmu\pnu \gamma^{-1}).
\end{align}
\end{lemma}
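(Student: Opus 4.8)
The plan is to translate ``well-defined on sections of $V$'' into two concrete requirements and then verify each by a direct differential-operator computation. Since a section obeys $s \tocc s$ under a change of coordinates and $s \togt \gamma s$ under a gauge transformation, and since $Ps$ must again be a section with the same behaviour, well-definedness is exactly the demand that $P$ be \emph{invariant} under c.c.\ (the same operator re-expressed in the new coordinates) and \emph{covariant} under g.t.\ in the sense $P \togt \gamma P \gamma^{-1}$. Because the coefficients $u^{\mu\nu}$, $v^\mu$, $w$ of a second-order operator are uniquely recovered from $P$ (they are its principal, sub-principal and zeroth-order parts), these two requirements are equivalent to the stated transformation laws: if $P$ is well-defined the coefficients \emph{must} transform as claimed, and conversely if they do then the computation below shows $P$ is invariant/covariant. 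This is what yields the ``if and only if''. I would therefore simply compute the transformed coefficients and read off the rules.

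For the gauge part I would expand $\gamma P \gamma^{-1} s = -\gamma[u^{\mu\nu}\pmu\pnu + v^\nu\pnu + w]\gamma^{-1}s$ with the Leibniz rule. The leading term $\gamma u^{\mu\nu}\pmu\pnu(\gamma^{-1}s)$ splits into $\gamma u^{\mu\nu}\gamma^{-1}\pmu\pnu s$, the two cross terms $\gamma u^{\mu\nu}(\pnu\gamma^{-1})\pmu s + \gamma u^{\mu\nu}(\pmu\gamma^{-1})\pnu s$, and $\gamma u^{\mu\nu}(\pmu\pnu\gamma^{-1})s$; the symmetry $u^{\mu\nu}=u^{\nu\mu}$ collapses the two cross terms into $2\gamma u^{\mu\nu}(\pnu\gamma^{-1})\pmu s$, which is the origin of the factor $2$ in the $v^\mu$ rule. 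Collecting by order of derivative then gives $\gamma u^{\mu\nu}\gamma^{-1}$ at second order, $\gamma v^\mu\gamma^{-1} + 2\gamma u^{\mu\nu}(\pnu\gamma^{-1})$ at first order, and $\gamma w\gamma^{-1} + \gamma v^\mu(\pmu\gamma^{-1}) + \gamma u^{\mu\nu}(\pmu\pnu\gamma^{-1})$ at zeroth order, exactly the asserted g.t.\ laws.

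For the coordinate part I would re-express the old derivatives via the chain rule $\pmu = J_\mu^\alpha\,\partial'_\alpha$ with $J_\mu^\alpha = \pmu x'^\alpha$, and expand $u^{\mu\nu}\pmu\pnu = u^{\mu\nu}J_\mu^\alpha J_\nu^\beta\,\partial'_\alpha\partial'_\beta + u^{\mu\nu}J_\mu^\alpha(\partial'_\alpha J_\nu^\beta)\,\partial'_\beta$ together with $v^\nu\pnu = v^\nu J_\nu^\beta\,\partial'_\beta$. The second-order part gives $J_\mu^\alpha J_\nu^\beta u^{\mu\nu}$, i.e.\ the tensorial rule for $u^{\mu\nu}$. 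For the first-order part I would convert $\partial'_\alpha J_\nu^\beta = {J^{-1}}^\sigma_\alpha\,\psigma J_\nu^\beta$ and use $J_\mu^\alpha{J^{-1}}^\sigma_\alpha = \delta_\mu^\sigma$ to obtain $J_\nu^\beta v^\nu + (\psigma J_\nu^\beta)\,u^{\nu\sigma}$, which after relabelling is precisely the stated $v^\mu$ rule. No zeroth-order term is generated, so $w \tocc w$.

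The computation carries no genuine difficulty; the only points demanding care are the consistent bookkeeping of Jacobian versus inverse-Jacobian indices in the coordinate step (in particular the contraction $J_\mu^\alpha{J^{-1}}^\sigma_\alpha = \delta_\mu^\sigma$) and the correct use of the symmetry of $u^{\mu\nu}$ in the gauge step, which is exactly what produces the factor $2$ in the transformation of $v^\mu$.
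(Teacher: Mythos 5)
Your proposal is correct and follows exactly the route the paper intends: the paper states that the lemma's proof is "a straightforward computation," and your explicit expansion of $\gamma P\gamma^{-1}$ (with the symmetry of $u^{\mu\nu}$ producing the factor $2$) and of the chain rule $\pmu = J_\mu^\alpha\,\partial'_\alpha$ (with the contraction $J_\mu^\alpha {J^{-1}}^\sigma_\alpha=\delta_\mu^\sigma$), plus the remark that the coefficients of a differential operator are uniquely determined so the transformation laws are forced, is precisely that computation carried out in full. No gaps.
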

As these relations show, neither $v^\mu$ nor $w$ have simple transformations under changes of coordinates or gauge transformations.

It is possible to parametrize $P$ using structures adapted to changes of coordinates and gauge transformations. Let us fix a (gauge) connection $A_\mu$ on $V$ and denote by $\nmu \vc \pmu + A_\mu$ its associated covariant derivative on sections of $V$. For any section $s$ of $V$, one then has:
\begin{align}
A_\mu &\tocc {J^{-1}}^\alpha_\mu A_\alpha,
&
A_\mu &\togt \gamma A_\mu \gamma^{-1} + \gamma (\pmu \gamma^{-1}),
&
\nmu s & \tocc {J^{-1}}^\alpha_\mu \nabla_\alpha s,
&
\nmu s & \togt \gamma \nmu s.
\label{eq-ccgtANabla}
\end{align}

\begin{lemma}
The differential operator 
\begin{equation}
s \mapsto Q s \vc -(\abs{g}^{-1/2} \nmu \abs{g}^{1/2} u^{\mu\nu} \nnu +p^\mu \nmu +q)\, s \label{Qs}
\end{equation}
is well defined on sections of $V$ if and only if $u^{\mu\nu}$, $p^\mu$ and $q$ have the following transformations:
\begin{align}
u^{\mu\nu} &\tocc J_\rho^\mu J_\sigma^\nu u^{\rho\sigma}, 
&
p^\mu &\tocc J_\rho^\mu p^\rho, 
&
q &\tocc q,
\label{eq-ccupq}\\
u^{\mu\nu} &\togt \gamma u^{\mu\nu}\gamma^{-1}, 
& 
p^\mu &\togt \gamma p^\mu \gamma^{-1}, 
 &
q &\togt \gamma q \gamma^{-1}.
\label{eq-gtupq}
\end{align}
It is equal to $P$ when (the $u^{\mu\nu}$ are the same in $P$ and $Q$)
\begin{align}
& v^\mu = \tfrac 12 (\pnu \log \abs{g})u^{\mu\nu} + \pnu u^{\mu\nu} + u^{\mu\nu} A_\nu + A_\nu u^{\mu\nu} + p^\mu,\label{vmu(A,p,q)}  \\
& w = \tfrac 12 (\pmu \log \abs{g})u^{\mu\nu} A_\nu + (\pmu u^{\mu\nu}) A_\nu + u^{\mu\nu} (\pmu A_\nu) + A_\mu u^{\mu\nu} A_\nu  + p^\mu A_\mu + q. \label{w(A,p,q)}
\end{align}
\end{lemma}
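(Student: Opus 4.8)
The statement has two parts: the equivalence between well-definedness of $Q$ and the tensorial transformation laws \eqref{eq-ccupq}--\eqref{eq-gtupq}, and the identity $Q=P$ under the substitutions \eqref{vmu(A,p,q)}--\eqref{w(A,p,q)}. The plan is to prove the equivalence by recognizing $Q$ as assembled from covariant building blocks (and by peeling off symbols for the converse), and to obtain $Q=P$ by a direct expansion of the covariant derivatives into partials.

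For the \emph{if} part, assume $u^{\mu\nu},p^\mu,q$ transform as in \eqref{eq-ccupq}--\eqref{eq-gtupq}. For each section $s$, set $\tilde W^\mu\vc u^{\mu\nu}\nnu s$. Using \eqref{eq-ccgtANabla} together with the tensor law of $u^{\mu\nu}$ and the contraction $J^\nu_\sigma {J^{-1}}^\beta_\nu=\delta^\beta_\sigma$, one checks $\tilde W^\mu\tocc J^\mu_\rho \tilde W^\rho$ and $\tilde W^\mu\togt \gamma\tilde W^\mu$, so $\tilde W^\mu$ is a $V$-valued vector field. The combination $\abs{g}^{-1/2}\nmu(\abs{g}^{1/2}\tilde W^\mu)$ is then the gauge-covariant divergence: its partial part $\abs{g}^{-1/2}\pmu(\abs{g}^{1/2}\tilde W^\mu)$ is the usual density divergence, a scalar under change of coordinates, while the remaining contraction $A_\mu\tilde W^\mu$ is a scalar by \eqref{eq-ccgtANabla}, and the whole transforms covariantly under gauge transformations. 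Likewise $p^\mu\nmu s$ and $qs$ are sections. Hence $Qs$ transforms as a section and $Q$ is well defined.

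For the \emph{only if} part I would peel off symbols. The principal symbol of $Q$ is $-u^{\mu\nu}\xi_\mu\xi_\nu$; well-definedness forces it to be a section of $\mathrm{Sym}^2 TM\otimes\End V$, which is exactly the transformation law \eqref{eq-ccupq}--\eqref{eq-gtupq} for $u^{\mu\nu}$. Granting this, the \emph{if} part shows that $\abs{g}^{-1/2}\nmu\abs{g}^{1/2}u^{\mu\nu}\nnu$ is well defined, so $-(p^\mu\nmu+q)=Q+\abs{g}^{-1/2}\nmu\abs{g}^{1/2}u^{\mu\nu}\nnu$ is a well-defined first-order operator, whose principal symbol $-p^\mu\xi_\mu$ forces the vector-endomorphism law for $p^\mu$; subtracting the (now well-defined) operator $-p^\mu\nmu$ leaves $-q$, forcing the endomorphism law for $q$.

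Finally, the identity $Q=P$ is the genuinely computational step. Expanding with $\nmu=\pmu+A_\mu$ and $\abs{g}^{-1/2}\pmu\abs{g}^{1/2}=\pmu+\tfrac12(\pmu\log\abs{g})$ and collecting by differential order, the second-order part is $u^{\mu\nu}\pmu\pnu$, matching the principal part of $P$; the coefficient of $\pnu s$ is $\tfrac12(\pmu\log\abs{g})u^{\mu\nu}+\pmu u^{\mu\nu}+u^{\mu\nu}A_\mu+A_\mu u^{\mu\nu}+p^\nu$, where the symmetry $u^{\mu\nu}=u^{\nu\mu}$ is used to rewrite the term $u^{\mu\nu}A_\nu\pmu s$ as $u^{\mu\nu}A_\mu\pnu s$ before folding it together with $A_\mu u^{\mu\nu}\pnu s$; setting this equal to $v^\nu$ yields \eqref{vmu(A,p,q)}, and the remaining zeroth-order terms collect to \eqref{w(A,p,q)}. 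The main obstacle is precisely this bookkeeping: there is no conceptual difficulty, but one must carefully track the density derivative $\tfrac12(\pmu\log\abs{g})$ and use the symmetry of $u^{\mu\nu}$ to bring the connection contributions into the exact form of \eqref{vmu(A,p,q)}--\eqref{w(A,p,q)}. As an independent consistency check one may invert \eqref{vmu(A,p,q)}--\eqref{w(A,p,q)} for $p^\mu,q$ and verify, via Lemma~\ref{lem-u vmu w cc gt}, that the inhomogeneous pieces in the transformations of $v^\mu,w$ cancel against those of $A_\mu$ and of the density.
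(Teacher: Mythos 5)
Your proposal is correct. On the identity $Q=P$ it coincides with the paper's proof: both expand $\nmu=\pmu+A_\mu$ and $\abs{g}^{-1/2}\pmu\abs{g}^{1/2}=\pmu+\tfrac12(\pmu\log\abs{g})$, collect by differential order, and use $u^{\mu\nu}=u^{\nu\mu}$ to merge $u^{\mu\nu}A_\nu\pmu$ with $A_\mu u^{\mu\nu}\pnu$, arriving at \eqref{vmu(A,p,q)} and \eqref{w(A,p,q)}. On the covariance half your route is organized differently: the paper proves coordinate invariance of $X=\abs{g}^{-1/2}\nmu\abs{g}^{1/2}u^{\mu\nu}\nnu$ by a head-on Jacobian computation, cancelling terms via Jacobi's formula $\abs{J}\prho\abs{J}^{-1}=-{J^{-1}}_\mu^\alpha(\prho J_\alpha^\mu)$ and the symmetry $\partial_\alpha J_\rho^\mu=\prho J_\alpha^\mu$, whereas you factor $Xs$ through the $V$-valued vector field $W^\mu\vc u^{\mu\nu}\nnu s$ and invoke the classical fact that the density divergence $\abs{g}^{-1/2}\pmu(\abs{g}^{1/2}W^\mu)$ is a coordinate scalar, leaving only the scalar contraction $A_\mu W^\mu$; this is legitimate here because $s\tocc s$, so the fibre components are inert under coordinate changes and the classical argument applies componentwise. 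Your version is more modular and shorter, but it outsources to that standard fact exactly the Jacobian cancellation the paper writes out; the gauge step is the same ``behaves like a section'' argument in both. Finally, you are more complete than the paper on the stated equivalence: the paper's proof establishes only the ``if'' direction (plus the identity $Q=P$), while your principal-symbol peeling --- read $u^{\mu\nu}$ off the second-order symbol, subtract the then well-defined $X$, read $p^\mu$ off the first-order symbol of the remainder, then $q$ --- supplies a clean proof of the ``only if'' direction, which the paper leaves implicit.
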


\begin{proof}
Combining \eqref{eq-ccgtANabla}, \eqref{eq-ccupq} and \eqref{eq-gtupq},  the operator $s \mapsto -(p^\mu \nmu +q)\, s$ is well behaved under changes of coordinates and gauge transformations. Let $X\vc \abs{g}^{-1/2} \nmu \abs{g}^{1/2} u^{\mu\nu} \nnu$ (a matrix valued “Laplace--Beltrami operator”). Then, using \eqref{eq-ccgtANabla} and \eqref{eq-ccupq}, one has
\begin{align*}
X \tocc \,& 
\abs{J}\abs{g}^{-1/2} {J^{-1}}^\alpha_\mu \nabla_\alpha \abs{J}^{-1}\abs{g}^{1/2} {J}^\mu_\rho {J}^\nu_\sigma u^{\rho\sigma}{J^{-1}}_\nu^\beta \nabla_\beta\\
&= 
\abs{J}\abs{g}^{-1/2} {J^{-1}}^\alpha_\mu \partial_\alpha \abs{J}^{-1}\abs{g}^{1/2} {J}^\mu_\rho u^{\rho\sigma} \nabla_\sigma + {J^{-1}}_\mu^\alpha J_\rho^\mu A_\alpha u^{\rho\sigma} \nabla_\sigma
\\
& 
= 
\abs{J} {J^{-1}}_\mu^\alpha J_\rho^\mu  (\partial_\alpha \abs{J}^{-1}) u^{\rho\sigma} \nabla_\sigma 
+ \abs{g}^{-1/2} {J^{-1}}_\mu^\alpha J_\rho^\mu  (\partial_\alpha \abs{g}^{1/2}) u^{\rho\sigma} \nabla_\sigma 
\\
&\quad\quad 
+ {J^{-1}}_\mu^\alpha(\partial _\alpha J_\rho^\mu) u^{\rho\sigma} \nabla_\sigma 
+{J^{-1}}_\mu^\alpha J_\rho^\mu \partial_\alpha u^{\rho\sigma} \nabla_\sigma 
+ A_\rho u^{\rho\sigma} \nabla_\sigma 
\\
& = 
\nabla_\rho u^{\rho\sigma} \nabla_\sigma
+ \abs{g}^{-1/2} (\prho \abs{g}^{1/2})u^{\rho\sigma} \nabla_\sigma  \\
&\quad\quad 
+ \abs{J}(\prho\abs{J}^{-1}) u^{\rho\sigma} \nabla_\sigma  
+ {J^{-1}}_\mu^\alpha (\partial_\alpha J_\rho^\mu) u^{\rho\sigma} \nabla_\sigma  
\end{align*}
which is equal to $X$ since $\abs{J}\prho \abs{J}^{-1}= -\abs{J}^{-1}\prho\abs{J}=-\prho \log \abs{J}=-{J^{-1}}_\mu^\alpha(\prho J_\alpha^\mu)$ (Jacobi's formula) and $\partial_\alpha J_\rho^\mu = \partial_\alpha \prho x'^\mu = \prho \partial_\alpha x'^\mu = \prho J_\alpha^\mu$.
Similarly, using \eqref{eq-ccgtANabla} and \eqref{eq-gtupq}, one has
\begin{equation*}
\abs{g}^{1/2} u^{\mu\nu} \nnu s \togt \gamma \abs{g}^{1/2} u^{\mu\nu} \nnu s \quad \text{(it behaves like a section of $V$)},
\end{equation*}
so that $Xs \togt  \gamma Xs$. This proves that $Q = -X  -(p^\mu \nmu +q)$ is well defined.

The expansion of $X$ gives
\begin{align*}
X 
& = 
\abs{g}^{-1/2} \pmu \abs{g}^{1/2} u^{\mu\nu} (\pnu 
+ A_\nu) + A_\nu u^{\mu\nu}(\pnu+ A_\nu) 
\\
&= 
\abs{g}^{-1/2} (\pmu \abs{g}^{1/2}) u^{\mu\nu} \pnu 
+ \abs{g}^{-1/2} (\pmu \abs{g}^{1/2}) u^{\mu\nu} A_\nu
+ (\pmu u^{\mu\nu}) \pnu
+ (\pmu u^{\mu\nu}) A_\nu
\\
& \quad\quad
+ u^{\mu\nu} \pmu \pnu  
+ u^{\mu\nu} (\pmu A_\nu)
+  u^{\mu\nu} A_\nu \pmu
+ A_\mu u^{\mu\nu} \pnu 
+ A_\mu u^{\mu\nu} A_\nu 
\\
&=
u^{\mu\nu} \pmu \pnu
+\tfrac 12 (\pmu \log \abs{g})u^{\mu\nu} \pnu
+ (\pmu u^{\mu\nu}) \pnu
+  u^{\mu\nu} A_\nu \pmu
+ A_\mu u^{\mu\nu} \pnu 
\\
& \quad\quad
+\tfrac 12 (\pmu \log \abs{g})u^{\mu\nu} A_\nu
+ (\pmu u^{\mu\nu}) A_\nu
+ u^{\mu\nu} (\pmu A_\nu)
+ A_\mu u^{\mu\nu} A_\nu 
\end{align*}
which, combined with the contributions of $-(p^\mu \nmu +q)$, gives \eqref{vmu(A,p,q)} and \eqref{w(A,p,q)}.
\end{proof}
Contrary to the situation in \cite[Section 1.2.1]{Gilkeybook}, one cannot take directly $p=0$ in \eqref{Qs} since we cannot always solve $A_\mu$ in \eqref{vmu(A,p,q)} to write it in terms of $u^{\mu\nu},\,v^\mu,\,w$.

\section*{Acknowledgments}

We would like to thank Andrzej Sitarz and Dimitri Vassilevich for discussions during the early stage of this work and Thomas Krajewski for his help with Lemma \ref{Vandermonde}.


\bigskip

\noindent bruno.iochum@cpt.univ-mrs.fr\\
thierry.masson@cpt.univ-mrs.fr\\
Centre de Physique Théorique, \\ 
Campus de Luminy, Case 907, 
163 Avenue de Luminy, 
13288 Marseille cedex 9, France


\begin{thebibliography}{4}



\bibitem{AV}
S.~Alexandrov and D.~Vassilevich, 
 \emph{Heat kernel for nonminimal operators on a K\"ahler manifold}, 
  J.\ Math.\ Phys.\  \textbf{37} (1996) 5715--5718.
  
  \bibitem{Anan}
  B. Ananthanarayan
  \emph{A note on the heat kernel coefficients for nonminimal operators}, 
  J. Phys. A: Math. Theor. \textbf{41} (2008) 415402 (7pp).

\bibitem{Avramidi2004}
 I. Avramidi,
  \emph{Gauged gravity via spectral asymptotics on non-Laplace type operators}, 
  JHEP07 (2004) 030.
  
  \bibitem{Avramidi2006}
  I. Avramidi, 
  \emph{Non-Laplace type operators on manifolds with boundary}, 
  Analysis, Geometry and Topology of Elliptic Operators, 
  World Scientific Publ. Co., 2006, 107--140.
  
  \bibitem{Avramidibook}
  I. Avramidi,
  \emph{Heat Kernel Method and its Applications}, 
  Birkh\"auser, 2015.
  
  \bibitem{AB}
  I. Avramidi and T. Branson,
  \emph{Heat kernel asymptotics of operators with non-Laplace principal part}, 
  Rev. Math. Phys. \textbf{13} (2001), 847--890.
  
  \bibitem{AB1}
  I. Avramidi and T. Branson,
  \emph{A discrete leading symbol and spectral asymptotics for natural differential operators}, 
  J. Funct. Anal. \textbf{190} (2002), 292--337.
  
  \bibitem{BGV}
  N. Berline, E. Getzler and M. Vergne,
  \emph{Heat kernels and Dirac Operators}
  Springer-Verlag, Berlin Heidelberg New York, 1992.
  
  \bibitem{BGP}
  T. Branson, P. Gilkey and A. Pierzchalski, 
  \emph{Heat equation asymptotics of elliptic differential operators with non-scalar leading symbol}, 
  Math. Nachr. \textbf{166} (1994), 207--215.

\bibitem{CM2014}
  A. Connes and H. Moscovici,
  \emph{Modular curvature for noncommutative two-tori},
  J. Amer. Math. Soc. \textbf{27} (2014), 639--684.
  
  \bibitem{Fulling}
  S. Fulling, 
  \emph{Kernel asymptotics of exotic second-order operators}, 
  Proc. of the 3rd International Colloquium on Differential Equations, D. Bainov and V. Covachev (Eds)(1992), 63--76.
  
  \bibitem{Gilkeybook}
  P. Gilkey, 
  \emph{Invariance Theory, the Heat Equation, and the Atiyah-Singer Index Theorem}, 
  CRC Press, 2nd ed., 1995.
  
  \bibitem{Gilkeybook1}
  P. Gilkey, 
  \emph{Asymptotic Formulae in Spectral Geometry}, 
  Chapman \& Hall/CRC, Boca Raton, 2004.

\bibitem{GBF}
 P. Gilkey, T. Branson and S. Fulling,
  \emph{Heat equation asymptotics of ''nonminimal'' operators on differential forms}, 
  J. Math. Phys. \textbf{32} (1991), 2069--2091.
  
\bibitem{Guen}
E. Guendelman, A. Leonidov, V. Nechitailo and D. Owen, 
\emph{On the heat kernel in covariant background gauge}, 
Phys. Lett. \textbf{B} 324 (1994),160--163.
  
 \bibitem{GG}
 V. Gusynin and E. Gorbar, 
 \emph{Local heat kernel asymptotics for nonminimal differential operators}, 
 Phys. Lett. \textbf{B} 270 (1991), 29--36.
 
\bibitem{GGR} 
 V. Gusynin, E. Gorbar and V. Romankov, 
 \emph{Heat kernel expansion for nonminimal differential operators and manifold with torsion}, 
Nuclear Phys. \textbf{B}362 (1991), 449--471.

\bibitem{GK}
 V. Gusynin and V. Kornyak, 
 \emph{Complete computation of DeWitt-Seeley-Gilkey coefficient E4 for nonminimal operator
on curved manifolds}, 
 arXiv:math/9909145v3, 2000.
  
\bibitem{Korniak}
  V. Kornyak, 
  \emph{Heat Invariant E2 for Nonminimal Operator on Manifolds with Torsion}, 
  arXiv:math/0004085v1, 2000.
 
\bibitem{Lam91a}
T.~Y. Lam.
\newblock {\em A First Course in Noncommutative Rings}, volume 131 of {\em
  Graduate Texts in Mathematics}.
\newblock Springer-Verlag, 1991.
 
\bibitem{Lesch}
  M. Lesch,
   \emph{Divided differences in noncommutative geometry rearrangement lemma, functional calculus and expansional formula}, 
   arXiv:1405.0863v2 [math.OA]

\bibitem{Mass95}
 T. Masson.
\newblock {\em G{\'e}om{\'e}trie non commutative et applications {\`a} la
  th{\'e}orie des champs}.
\newblock Th{\`e}se de doctorat, Universit{\'e} Paris XI (1995).
\newblock Vienna, Preprint ESI 296 (1996).

\bibitem{MT}
I. Moss and D. Toms, 
\emph{Invariants of the heat equation for non-minimal operators}, 
J. Phys. A: Math. Theor. \textbf{47} (2014) 215401 (28pp).

\bibitem{Toms}
D. Toms, 
\emph{Local momentum space and the vector field}, 
Phys. Rev. D \textbf{90} (2014), 044072.
\bibitem{ZJ}
  J. Zinn-Justin, 
  \emph{Quantum Field Theory and Critical Phenomena}, 
  Clarendon Press, Oxford, 2002.
   


\end{thebibliography}
\end{document}